\newtheorem{assumption}[theorem]{Assumption}
\newcommand\norm[2]{\left\Vert#1\right\Vert_{#2}}
\newcommand\N{\mathbb{N}}
\newcommand\R{\mathbb{R}}
\DeclareMathOperator*{\argmin}{\operatorname{argmin}}
\DeclareMathOperator*{\argmax}{\operatorname{argmax}}
\DeclareMathOperator{\dom}{dom}
\DeclareMathOperator{\intr}{int}
\DeclareMathOperator{\conv}{conv}
\DeclareMathOperator{\gph}{gph}
\DeclareMathOperator{\cl}{cl}
\DeclareMathOperator{\epi}{epi}
\DeclareMathAlphabet{\mathpzc}{OT1}{pzc}{m}{it}
\newcommand\oo{\mathpzc{o}}
\def\thetitle{Sufficient optimality conditions in bilevel programming}
\def\theauthor{Patrick Mehlitz, Alain B.\ Zemkoho}
\def\theauthorA{Patrick Mehlitz} 
\def\theauthorB{Alain B.\ Zemkoho}
\author{%
\theauthorA\thanks{
	Institute of Mathematics, Chair of Optimal Control.
	Brandenburgische Technische Universität Cottbus--Senftenberg, Germany.
	\email{mehlitz@b-tu.de}
	},\,
\theauthorB\thanks{
	School of Mathematics.
	University of Southampton, United Kingdom,
	\email{a.b.zemkoho@soton.ac.uk}
	}
}
\title{\thetitle}
\shorttitle{Sufficient optimality conditions in bilevel programming}
\date{\ISOToday}
\begin{document}

\maketitle

\begin{abstract}
	This paper is concerned with the derivation of first- and second-order sufficient optimality conditions
	for optimistic bilevel optimization problems involving smooth functions.
	First-order sufficient optimality conditions are obtained by estimating the tangent cone to the
	feasible set of the bilevel program in terms of initial problem data. This is done by exploiting several different
	reformulations of the hierarchical model as a single-level problem.
	To obtain second-order sufficient optimality conditions, we exploit the so-called value
	function reformulation of the bilevel optimization problem, which is then tackled with the aid of second-order
	directional derivatives. The resulting conditions can be stated in terms of initial problem data in several
	interesting situations comprising the settings where the lower level is linear or possesses strongly stable
	solutions.
	\\[2ex]
\noindent
Keywords: bilevel optimization, first-order sufficient optimality conditions, second-order directional derivatives, second-order sufficient optimality conditions
\\[2ex]
\noindent
MSC (2010): 90C30, 90C33, 90C46
\end{abstract}

\section{Introduction}\label{Introduction}
Bilevel programming is without any doubt one of the most popular research areas in mathematical
optimization. On the one hand, this interest is a consequence of numerous underlying real-world
applications from a wide range of areas, including economics, finance, logistics, and chemistry, 
which can be modeled as hierarchical
programs with two decision makers. On the other hand, bilevel programs are mathematically
challenging since they are inherently nonsmooth, nonconvex, and irregular in the sense that
a reformulation of the hierarchical model as single-level program results in surrogate problems
which suffer from an inherent lack of smoothness, convexity, and regularity. A detailed introduction
to the topic of bilevel programming can be found in the monographs
\cite{Bard1998,Dempe2002,DempeKalashnikovPerezValdesKalashnykova2015,ShimizuIshizukaBard1997}.

Despite the vast amount of work done so far on bilevel optimization, very little appears to have been 
done on sufficient optimality conditions for this problem class. 
The aim of this paper is to make new contributions in this area. 
Note that as in standard nonlinear optimization, such results have the potential to help accelerate work 
on numerical methods and stability analysis of bilevel programs. 
To proceed, we consider the standard bilevel optimization problem
\begin{equation}\label{eq:BPP}\tag{BPP}
  	\min\limits_{x,y} \{F(x,y)\,|\,G(x,y)\leq 0,\; y\in S(x)\},
\end{equation}
also known as the upper level problem. 
Here, the set-valued mapping $S \colon \mathbb{R}^n \rightrightarrows \mathbb{R}^m$ represents the
optimal solution mapping of the so-called lower level problem and is defined by
\begin{equation}\label{eq:lower-level}
	\forall x\in\R^n\colon\quad S(x):= \argmin\limits_y\{f(x,y)\,|\,g(x,y)\leq 0\}.
\end{equation}
The functions $F, f \colon\R^n\times\R^m\to\R$, $G\colon\R^n\times\R^m\to\R^p$,  and $g \colon\R^n\times\R^m\to\R^q$
are assumed to be twice continuously differentiable. Throughout the manuscript, the component maps of $G$ and $g$ will be denoted by
$G_1,\ldots,G_p\colon\R^n\times\R^m\to\R$ and $g_1,\ldots,g_q\colon\R^n\times\R^m\to\R$, respectively.
Recall that problem \eqref{eq:BPP} has a two-level structure with $F$ (resp.\ $f$) denoting the upper level (resp.\ lower level)
objective function and $G$ (resp.\ $g$) representing the upper level (resp.\ lower level) constraint functions.
Observing that the minimization in \eqref{eq:BPP} is carried out w.r.t.\ $x$ and $y$, the given model
is closely related to the optimistic approach of the bilevel programming problem, see \cite[Section~5.3]{Zemkoho2012} for details.

During the last decades, many papers appeared where necessary optimality conditions for \eqref{eq:BPP}
are derived via different single-level reformulations of the model. 
However, there is a significant gap in the literature on \emph{sufficient} optimality conditions for \eqref{eq:BPP}. 
In \cite[Theorem~4.2]{Dempe1992}, the author presents a sufficient optimality condition for bilevel programming
problems with uniquely determined lower level optimal solutions. The underlying analysis is based on conditions which ensure
that the lower level optimal solution operator is singleton-valued and directionally differentiable.
As demonstrated in \cite[Theorem~6.1]{YeZhu1995}, exploiting standard second-order theory from smooth nonlinear programming,
it is possible to infer sufficient optimality conditions for bilevel programming problems where upper and lower level
problem are not constrained.
In \cite{DempeGadhi2010}, the authors derive second-order optimality conditions for bilevel optimization
problems using approximate derivatives. However, these optimality conditions are not stated in terms of
initial problem data and hence, for large parts, do not reflect the inherent difficulties of bilevel optimization.
A derivative-free sufficient optimality condition for bilevel optimization problems which is based on
a $\max$-$\min$-equality is presented in \cite{AboussororAdly2018}.
In \cite{GadhiHamdaouriElIdrissi2019}, the authors exploit generalized convexity assumptions and
a new reformulation of the bilevel programming problem based on a so-called $\Psi$-function in order to infer a
first-order sufficient optimality condition. Again, we emphasize that the obtained condition is not stated
in terms of initial problem data.

In \cite{BazaraaSheraliShetty1993,BenTal1980,McCormick1967}, second-order sufficient optimality
conditions for standard nonlinear programs have been derived which are based on the notion of
a suitable critical cone and the second-order derivative of the underlying Lagrangian.
Supposing that the feasible set of the program under consideration is given in an abstract way
(e.g.\ by generalized inequalities), a generalization of these results is possible using
the theory of suitable second-order tangent sets,
see \cite{BonnansCominettiShapiro1999,BonnansShapiro2000,CambiniMarteinVlach1999,Mehlitz2019,MohammadiMordukhovichSarabi2019,Penot1998}.
Let us mention that using generalized second-order derivatives, it is possible to derive second-order
sufficient conditions in situations where nonsmooth optimization problems are under consideration,
see \cite{BenTalZowe1982,BonnansShapiro2000,RockafellarWets1998,RueckmannShapiro2001,Studniarski1986}.

In this paper, we are going to derive new first- and second-order sufficient optimality conditions for \eqref{eq:BPP}
in terms of initial problem data. For the first-order sufficient conditions, we exploit several approaches
in order to approximate the tangent cone to the feasible set of \eqref{eq:BPP} from above.
The resulting primal conditions generally require that a certain system of equations does not
possess a nontrivial solution. In order to obtain second-order sufficient optimality conditions
for \eqref{eq:BPP}, we suggest to exploit the so-called value function reformulation of \eqref{eq:BPP},
see \cref{sec:preliminaries_bilevel}, as well as second-order directional derivatives of all
the involved functions. Our analysis is based on related investigations for standard nonlinear
problems, see, e.g., \cite{BenTalZowe1982,Shapiro1988,Shapiro1988b}, and semi-infinite nonlinear
problems, see \cite{RueckmannShapiro2001}.
Throughout the paper, we present several examples in order to visualize the applicability of the obtained theory.

The remaining parts of this manuscript are organized as follows:
In \cref{sec:preliminaries}, we first introduce the basic notation used throughout the paper.
Afterwards, some fundamentals of variational analysis are recalled. Furthermore, we state all
notions of first- and second-order directional differentiability that we are going to exploit in
this paper. Finally, we briefly summarize some essentials of constrained programming.
\Cref{sec:preliminaries_bilevel} recalls some fundamental concepts from bilevel optimization.
In \cref{sec:first_order_sufficient_conditions}, first-order sufficient
optimality conditions for \eqref{eq:BPP} are derived. Therefore, we estimate the tangent cone to the
feasible set of \eqref{eq:BPP} from above using different single-level surrogate problems associated with
\eqref{eq:BPP}, namely the value function reformulation, the generalized equation reformulation, and
the Karush--Kuhn--Tucker (KKT) reformulation.
In \cref{sec:second_order_optimality_conditions}, second-order sufficient optimality conditions
for \eqref{eq:BPP} will be derived. Therefore, we exploit the value function reformulation.
In order to describe the curvature of the optimal value function associated with \eqref{eq:lower-level}, we make
use of its second-order directional derivative. We first obtain an abstract result comprising some
generalized derivatives of the optimal value function. In three exemplary settings, namely where
the parametric optimization problem in \eqref{eq:lower-level} is fully linear, linear in $y$ and the
parameters $x$ enters the lower level objective linearly, and where the lower level optimal solutions are strongly
stable, the result is specified in terms of initial problem data.
Finally, we close the paper in \cref{sec:conclusions} with some concluding remarks comprising
directions for future research.

\section{Notation and preliminaries}\label{sec:preliminaries}

\subsection{Basic notation}

For a matrix $A\in\R^{m\times n}$ and a set $C\subseteq\R^n$, let us set $AC:=\{Ax\,|\,x\in C\}$.
Furthermore, we define by $\operatorname{dist}(x, C):=\inf\{\norm{x-y}{2}\,|\,y\in C\}$ the
distance between $x$ and $C$. Here, $\norm{\cdot}{2}$ represents the Euclidean norm in $\R^n$.
Forthwith, $\conv C$, $\cl C$, and $\intr C$ denote the convex hull, the closure,
and the interior of $C$, respectively.
We use $I_n\in\R^{n\times n}$ to express the identity.
By $\dom \theta:=\{x\in\R^n\,|\,|\theta(x)|<\infty\}$ and
$\epi\theta:=\{(x,\alpha)\in\R^n\times\R\,|\,\theta(x)\leq\alpha\}$, we denote the domain and
the epigraph of a function $\theta\colon\R^n\to\overline\R$, respectively,
where $\overline\R$ represents the extended real line.
Similarly, for a set-valued mapping $\Theta\colon\R^n\rightrightarrows\R^m$, we use
$\dom\Theta:=\{x\in\R^n\,|\,\Theta(x)\neq\varnothing\}$ and
$\gph\Theta:=\{(x,y)\in\R^n\times\R^m\,|\,y\in\Theta(x)\}$ in order to represent the domain
and the graph of $\Theta$, respectively.
At $\bar x\in\dom\Theta$, $\Theta$ is said to be locally bounded whenever there exist a neighborhood
$U\subseteq\R^n$ of $\bar x$ and a bounded set $B\subseteq\R^m$ such that $\Theta(x)\subseteq B$ 
holds for all $x\in U$.
Furthermore, we call $\Theta$ metrically subregular at $(\bar x,\bar y)\in\gph\Theta$ whenever there are
a neighborhood $U\subseteq\R^n$ of $\bar x$ and a constant $\kappa>0$ such that
\[
	\forall x\in U\colon\quad
	\operatorname{dist}(x,\Theta^{-1}(\bar y))
	\leq
	\kappa\operatorname{dist}(\bar y,\Theta(x))
\]
is valid. Here, we used $\Theta^{-1}(\bar y):=\{x\in\R^n\,|\,\bar y\in\Theta(x)\}$ for the
preimage of $\bar y$ under $\Theta$.

\subsection{Variational Analysis}

Let $C\subseteq\R^n$ be a nonempty set. Then, we call
\[
	C^\circ
	:=
	\{y\in\R^n\,|\,\forall x\in C\colon\,x^\top y\leq 0\}
\]
the polar cone of $C$. Note that $C^\circ$ is a nonempty, closed, convex cone.
For a nonempty index set $I$ and a family $\{C_i\}_{i\in I}$ of sets from $\R^n$, one easily
obtains the polarization rule
\[
	\left(\bigcup\nolimits_{i\in I}C_i\right)^\circ
	=
	\bigcap\nolimits_{i\in I}C^\circ_i
\]
by definition of the polar cone. For the polyhedral cone
\[
	P
	:=
	\left\{
		x\in\R^n
		\,\middle|\,
		Ax\leq 0,\,Bx=0
	\right\},
\]
where $A\in\R^{m_1\times n}$ and $B\in\R^{m_2\times n}$ are arbitrary matrices, one has (e.g., by Farkas' lemma) that
\[
	P^\circ
	=
	\left\{
		A^\top \mu+B^\top\nu\in\R^n
		\,|\,
		\mu\in\R^{m_1},\,\mu\geq 0,\,\nu\in\R^{m_2}
	\right\}.
\]
For a some point $\bar x\in C$, we refer to
\[
	\mathcal T_C(\bar x):=
		\left\{
			d\in\R^n\,\middle|\,
				\exists\{t_k\}_{k\in\N}\subseteq\R\,\exists\{d_k\}_{k\in\N}\subseteq\R^n\colon\,
				t_k\downarrow 0,\,d_k\to d,\,\bar x+t_kd_k\in C\;\forall k\in\N
		\right\}
\]
as the tangent (or Bouligand) cone to $C$ at $\bar x$. Furthermore,
\[
	\widehat{\mathcal N}_C(\bar x):=\mathcal T_C(\bar x)^\circ
\]
is referred to as the regular (or Fr\'{e}chet) normal cone to $C$ at $\bar x$.
The following lemma, which presents a calculus rule for tangents to linear images,
is taken from \cite[Proposition~4.3.9]{AubinFrankowska2009}.
\begin{lemma}\label{lem:direct_image}
	Fix $C\subseteq\R^n$ and a matrix $A\in\R^{m\times n}$ as well as some
	point $\bar x\in C$ with
	\[
		\operatorname{ker} A\cap\mathcal T_C(\bar x)=\{0\}.
	\]
	Then, it holds $\mathcal T_{AC}(A\bar x)=\cl(A\mathcal T_C(\bar x))$.
	Above, $\operatorname{ker} A$ denotes the null space of $A$.
\end{lemma}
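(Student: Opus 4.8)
The plan is to prove the two inclusions separately, noting that only the nontrivial one needs the kernel condition.

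First I would establish the inclusion $\cl(A\mathcal T_C(\bar x))\subseteq\mathcal T_{AC}(A\bar x)$, which holds unconditionally. Given $d\in\mathcal T_C(\bar x)$, choose $t_k\downarrow 0$ and $d_k\to d$ with $\bar x+t_kd_k\in C$. Then $A\bar x+t_k(Ad_k)=A(\bar x+t_kd_k)\in AC$ while $Ad_k\to Ad$, so $Ad\in\mathcal T_{AC}(A\bar x)$. Hence $A\mathcal T_C(\bar x)\subseteq\mathcal T_{AC}(A\bar x)$, and since the tangent cone on the right is closed, the inclusion survives passing to the closure.

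For the reverse inclusion $\mathcal T_{AC}(A\bar x)\subseteq\cl(A\mathcal T_C(\bar x))$, I would take $w\in\mathcal T_{AC}(A\bar x)$ with associated sequences $t_k\downarrow 0$, $w_k\to w$, and $A\bar x+t_kw_k\in AC$, then pick $c_k\in C$ with $Ac_k=A\bar x+t_kw_k$ and set $d_k:=(c_k-\bar x)/t_k$, so that $\bar x+t_kd_k=c_k\in C$ and $Ad_k=w_k\to w$. The decisive observation is that \emph{if} $\{d_k\}$ is bounded, then along a subsequence $d_k\to d$; the relation $\bar x+t_kd_k=c_k\in C$ then certifies $d\in\mathcal T_C(\bar x)$, and $Ad=\lim w_k=w$, which yields $w=Ad\in A\mathcal T_C(\bar x)$ directly (even before closing up).

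The entire difficulty therefore reduces to guaranteeing that $\{d_k\}$ is bounded, and this is exactly where the hypothesis $\operatorname{ker}A\cap\mathcal T_C(\bar x)=\{0\}$ must be invoked. I would split $d_k=d_k'+d_k''$ along $\R^n=(\operatorname{ker}A)^\perp\oplus\operatorname{ker}A$; since $A$ is injective on $(\operatorname{ker}A)^\perp$, there is $\gamma>0$ with $\gamma\|d_k'\|\le\|Ad_k'\|=\|Ad_k\|=\|w_k\|$, so the complementary component $\{d_k'\}$ is automatically bounded and only the kernel component $\{d_k''\}$ can misbehave. Assuming $\{d_k\}$ unbounded, I would pass to a subsequence with $\|d_k\|\to\infty$ and $e_k:=d_k/\|d_k\|\to e$, $\|e\|=1$; then $Ae=\lim w_k/\|d_k\|=0$, so $e\in\operatorname{ker}A$, while $\bar x+(t_k\|d_k\|)e_k=c_k\in C$ is meant to exhibit $e$ as a tangent direction of $C$ at $\bar x$, contradicting the kernel condition and forcing boundedness. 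The step I expect to be the genuine obstacle is precisely this last identification: to conclude $e\in\mathcal T_C(\bar x)$ one must control the scaling $t_k\|d_k\|$ (driving it to zero, or arguing through the asymptotic tangent cone) so that the points $c_k$ actually approach $\bar x$ along $e$. Managing this interplay between $t_k\downarrow 0$ and the blow-up $\|d_k\|\to\infty$ is the heart of the proof and the reason the hypothesis cannot simply be dropped; once boundedness is secured, the extraction of a convergent subsequence and the identification of its limit are routine.
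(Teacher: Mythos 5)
Your first inclusion $\cl(A\mathcal T_C(\bar x))\subseteq\mathcal T_{AC}(A\bar x)$ and your reduction of the converse inclusion to the boundedness of $\{d_k\}$ are both correct, and you have located the crux accurately. But the step you leave open --- deducing boundedness from $\operatorname{ker}A\cap\mathcal T_C(\bar x)=\{0\}$ --- is not a technicality that can be ``managed''; it is where the argument genuinely breaks down, and it cannot be repaired from the stated hypotheses. The kernel condition only constrains directions generated by points of $C$ that \emph{converge to} $\bar x$, whereas your preimages $c_k\in C$ of $A\bar x+t_kw_k$ are in no way forced to approach $\bar x$: the scalars $s_k:=t_k\|d_k\|=\|c_k-\bar x\|$ may stay bounded away from zero or even diverge, in which case the limit $e$ of $e_k=d_k/\|d_k\|$ is an asymptotic (horizon) direction of $C$ rather than an element of $\mathcal T_C(\bar x)$, and no contradiction arises. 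Concretely, take $n=2$, $m=1$, $C:=\{(0,0)\}\cup\{(t,t^{-1})\,|\,t>0\}$ (a closed set), $\bar x:=(0,0)$, and let $A\in\R^{1\times 2}$ be the projection onto the first coordinate. Every nonzero point of $C$ has norm at least $\sqrt2$, so $\mathcal T_C(\bar x)=\{0\}$ and the hypothesis holds trivially; yet $AC=[0,\infty)$, so $\mathcal T_{AC}(A\bar x)=[0,\infty)\neq\{0\}=\cl(A\mathcal T_C(\bar x))$. For $w:=1$, $w_k:=1$, your construction produces $c_k=(t_k,t_k^{-1})$, $d_k=(1,t_k^{-2})$, $e=(0,1)\in\operatorname{ker}A$, and indeed $e\notin\mathcal T_C(\bar x)$ precisely because $\|c_k-\bar x\|\to\infty$. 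Since the hypothesis is purely local at $\bar x$ while membership of $A\bar x+t_kw_k$ in $AC$ can be certified by points of $C$ arbitrarily far from $\bar x$, no argument --- by your route or any other --- can derive the conclusion from the displayed assumptions alone.

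For context: the paper itself gives no proof of this lemma but imports it from \cite[Proposition~4.3.9]{AubinFrankowska2009}, so the relevant comparison is with the assumptions under which such direct-image formulas are actually established. What is missing is a properness-type condition linking the global behaviour of $C$ under $A$ back to $\bar x$; your parenthetical remark about the ``asymptotic tangent cone'' is exactly the right instinct. If, for instance, $C$ is closed, $\operatorname{ker}A\cap C^\infty=\{0\}$ for the horizon cone $C^\infty$ of $C$, and $A^{-1}(A\bar x)\cap C=\{\bar x\}$, then your blow-up argument does close: first, $\{c_k\}$ is bounded, since otherwise $c_k/\|c_k\|$ would yield a unit vector in $\operatorname{ker}A\cap C^\infty$; second, every accumulation point of $\{c_k\}$ lies in $C$, is mapped by $A$ to $A\bar x$, and hence equals $\bar x$, so that $s_k\to 0$; only then does $\bar x+s_ke_k\in C$ certify $e\in\mathcal T_C(\bar x)\cap\operatorname{ker}A$ with $\|e\|=1$, which is the contradiction you were after. (For convex $C$, the identity even holds with no kernel condition at all, since $\cl(A\mathcal T_C(\bar x))=\cl\bigl(\R_+(AC-A\bar x)\bigr)=\mathcal T_{AC}(A\bar x)$.) In the paper's own application of the lemma (\cref{lem:tangent_cone_via_KKT_approach}, with $C=M_\textup{KKT}$ and $A=\Pi$), the missing properness is supplied implicitly by the local boundedness and single-valuedness of the lower level multiplier map under \eqref{eq:SMFC}; it does not follow from $\operatorname{ker}A\cap\mathcal T_C(\bar x)=\{0\}$ by itself.
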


\subsection{Generalized differentiation}\label{sec:generalized_differentiation}

Throughout the section, we fix a function $\theta\colon\R^n\to\overline\R$ as well as some
point $\bar x\in\dom \theta$.
Let us first recall some fundamental notions of (first-order) directional differentiability,
see, e.g., \cite{Clarke1983,Shapiro1990}.
For a direction $d\in\R^n$, the limits
\[
	\theta^+(\bar x;d):=\limsup\limits_{t\downarrow 0}\frac{\theta(\bar x+td)-\theta(\bar x)}{t}
	\qquad\mbox{ and } \qquad
	\theta^-(\bar x;d):=\liminf\limits_{t\downarrow 0}\frac{\theta(\bar x+td)-\theta(\bar x)}{t}
\]
are, respectively, referred to as the upper and lower Dini directional derivative of $\theta$ at $\bar x$ in
direction $d$. In case of existence, we call
\[
	\theta'(\bar x;d):=\lim\limits_{t\downarrow 0}\frac{\theta(\bar x+td)-\theta(\bar x)}{t}
\]
the directional derivative (in G\^{a}teaux's sense) of $\theta$ at $\bar x$ in direction $d$.
Similarly, we introduce the Hadamard and Clarke directional derivative of $\theta$ at $\bar x$ in
direction $d$, respectively, as
\[
	\theta^\ast(\bar x;d):=\lim\limits_{t\downarrow 0,\,d'\to d}\frac{\theta(\bar x+td')-\theta(\bar x)}{t}
	\qquad\mbox{ and } \qquad
	\theta^\circ(\bar x;d):=\limsup\limits_{t\downarrow 0,\,x\to\bar x}\frac{\theta(x+td)-\theta(x)}{t}.
\]
We call $\theta$ directionally differentiable (resp.\ Hadamard directionally differentiable, Clarke directionally
differentiable) at $\bar x$ if the limit $\theta'(\bar x;d)$ (resp.\ $\theta^\ast(\bar x;d)$, $\theta^\circ(\bar x;d)$)
exists for each $d\in\R^n$. Clearly, if $\theta$ is continuously differentiable at $\bar x$, then all
these generalized derivatives coincide with $\nabla \theta(\bar x)^\top d$ for each $d\in\R^n$.
Let us note that whenever $\theta$ is locally Lipschitz continuous at $\bar x$, then it is Clarke
directionally differentiable there. If, in addition to local Lipschitz continuity, $\theta$ is
directionally differentiable at $\bar x$, then it is Hadamard directionally differentiable at $\bar x$
and $\theta'(\bar x;d)$ as well as $\theta^\ast(\bar x;d)$ coincide.
We call $\theta$ Clarke-regular at $\bar x$ if $\theta'(\bar x;d)=\theta^\circ(\bar x;d)$
holds for all $d\in\R^n$. One can check that convex functions are Clarke regular at all points from $\intr\dom\theta$.
Clearly, continuously differentiable functions are Clarke regular as well.

Supposing that $\theta$ is locally Lipschitz continuous at $\bar x$, its Clarke subdifferential at $\bar x$ given by
\[
	\partial^c\theta(\bar x):=\{\xi\in\R^n\,|\,\forall d\in\R^n\colon\;\xi^\top d\leq \theta^\circ(\bar x;d)\}
\]
is nonempty, convex, and compact. Let us note that this subdifferential construction enjoys full calculus, see
\cite{Clarke1983}, but is comparatively large w.r.t.\ set inclusion, i.e., it yields very weak necessary
optimality conditions. Using the subdifferential, Clarke's directional derivative can be recovered by the formula
\[
	\forall d\in\R^n\colon\quad
	\theta^\circ(\bar x;d)=\max\{\xi^\top d\,|\,\xi\in\partial^c\theta(\bar x)\}.
\]
Next, let $\theta$ be a convex function. Then, we call
\[
	\partial\theta(\bar x):=\{\xi\in\R^n\,|\,\forall x\in\R^n\colon\,\theta(x)\geq\theta(\bar x)+\xi^\top(x-\bar x)\}
\]
the subdifferential of $\theta$ at $\bar x$ which is nonempty, closed, and convex.
It is well known that
\[
	\partial\theta(\bar x)=\{\xi\in\R^n\,|\,\forall d\in\R^n\colon\,\xi^\top d\leq \theta'(\bar x;d)\}
\]
holds true. Thus, whenever a point $\bar x$ is under consideration where $\theta$ is locally Lipschitz continuous,
then $\partial^c\theta(\bar x)=\partial\theta(\bar x)$ follows by Clarke regularity of $\theta$ at $\bar x$.

Below, we study a second-order directional derivative which has been exploited for the
second-order analysis of optimization problems in e.g.\
\cite{BenTalZowe1982,BonnansCominettiShapiro1999,BonnansShapiro2000,RueckmannShapiro2001,Shapiro1988,Shapiro1988b}.
For further information and some illustrative examples, we refer the interested reader
to the aforementioned references.
We say that $\theta$ is
second-order directionally differentiable at $\bar x$ if the limit
\[
	\theta''(\bar x;d,w)
	:=
	\lim\limits_{t\downarrow 0}
	\frac{\theta(\bar x+td+\tfrac12t^2w)-\theta(\bar x)-t\theta'(\bar x;d)}{\tfrac12 t^2}
\]
exists for each choice of $d,w\in\R^n$. In case of existence, the above limit is referred to
as second-order directional derivative of $\theta$ at $\bar x$ w.r.t.\ the directions $d$ and $w$.
Supposing that $\theta''(\bar x;d,w)$ is finite, 
it holds $(\alpha\theta)''(\bar x;d,w)=\alpha\,\theta''(\bar x;d,w)$ for each $\alpha\in\R$.
Furthermore, the sum rule is available for the second-order directional derivative provided all
involved functions are second-order directionally differentiable.
If $\theta$ is twice continuously differentiable at $\bar x$, then a second-order Taylor expansion yields
\begin{equation}\label{eq:second_order_taylor_expansion}
	\theta(\bar x+td+\tfrac12 t^2w)
	=
	\theta(\bar x)+\nabla\theta(\bar x)^\top d
	+
	\tfrac12t^2\left(\nabla \theta(\bar x)^\top w+d^\top\nabla^2\theta(\bar x)d\right)
	+
	\oo(t^2)
\end{equation}
which shows that for all $d,w\in\R^n$, one has
\begin{equation}\label{eq:second_order_directional_derivative_smooth_function}
	\theta''(\bar x;d,w)=\nabla\theta(\bar x)^\top w+d^\top\nabla^2\theta(\bar x)d.
\end{equation}

Subsequently, we state a calculus rule for the computation of second-order directional
derivatives associated with the pointwise maximum of finitely many second-order directionally
differentiable functions.
It follows from the chain rule, see \cite[Proposition~2.53]{BonnansShapiro2000}, and can
be found in the literature, see \cite[Lemma~3.2]{BenTalZowe1982} or \cite[Section~4.1]{BonnansCominettiShapiro1999}.
\begin{lemma}\label{lem:max_rule}
	Let $\theta_1,\ldots,\theta_p\colon\R^n\to\overline\R$ be Lipschitz continuous and
	second-order directionally differentiable at $\bar x\in\bigcap_{i=1}^p\dom\theta_i$.
	Furthermore, set $\theta:=\max\{\theta_1,\ldots,\theta_p\}$.
	Then, $\theta$ is second-order directionally differentiable at $\bar x$ and it holds
	\[
		\forall d\in\R^n\colon\quad
		\theta'(\bar x;d)=\max\{\theta'_i(\bar x;d)\,|\,i\in I(\bar x)\}
	\]
	as well as
	\[
		\forall d,w\in\R^n\colon\quad
		\theta''(\bar x;d,w)=\max\{\theta''_i(\bar x;d,w)\,|\,i\in I(\bar x,d)\},
	\]
	where we use
	\[
			I(\bar x) :=\{i\in\{1,\ldots,p\}\,|\,\theta(\bar x)=\theta_i(\bar x)\}\qquad \mbox{ and }\qquad
			I(\bar x,d):=\{i\in I(\bar x)\,|\,\theta'(\bar x;d)=\theta'_i(\bar x;d)\}.
	\]
\end{lemma}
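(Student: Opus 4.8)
The plan is to compute the defining limits for $\theta'$ and $\theta''$ directly, at each order reducing the maximum over all of $\{1,\ldots,p\}$ to a maximum over the indices that are relevant at that order. Since a maximum of finitely many Lipschitz functions is Lipschitz, $\theta$ is Lipschitz near $\bar x$, and for every index $i$ the definition of second-order directional differentiability furnishes the parabolic expansion $\theta_i(\bar x+td+\tfrac12t^2w)=\theta_i(\bar x)+t\,\theta_i'(\bar x;d)+\tfrac12 t^2\,\theta_i''(\bar x;d,w)+\oo(t^2)$, whose linear coefficient is, by definition, the straight-line derivative $\theta_i'(\bar x;d)$. This built-in consistency of the linear term is what makes the reductions below go through cleanly.

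For the first-order formula I would work along the straight line $\bar x+td$. For $i\notin I(\bar x)$ one has $\theta_i(\bar x)<\theta(\bar x)$, so continuity keeps $\theta_i(\bar x+td)$ strictly below the active branches for all small $t>0$; hence $\theta(\bar x+td)=\max_{i\in I(\bar x)}\theta_i(\bar x+td)$ near $t=0$. Because $\theta_i(\bar x)=\theta(\bar x)$ for active $i$, the difference quotient of $\theta$ equals the maximum of the individual difference quotients, and since the maximum over a finite set commutes with the limit, letting $t\downarrow 0$ yields $\theta'(\bar x;d)=\max\{\theta_i'(\bar x;d)\mid i\in I(\bar x)\}$; in particular $\theta'(\bar x;d)$ exists.

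For the second-order formula I would work along the parabolic path $x_t:=\bar x+td+\tfrac12 t^2w$ and set $\alpha:=\theta'(\bar x;d)$. Inactive indices $i\notin I(\bar x)$ again drop out by continuity. For $i\in I(\bar x)\setminus I(\bar x,d)$ the expansion gives $\theta_i(x_t)-\theta(\bar x)=t\,\theta_i'(\bar x;d)+\oo(t)$ with $\theta_i'(\bar x;d)<\alpha$, while every $j\in I(\bar x,d)$ satisfies $\theta_j(x_t)-\theta(\bar x)=t\alpha+\oo(t)$; the order-$t$ gap $t(\alpha-\theta_i'(\bar x;d))>0$ eventually dominates, so these indices too fall out and $\theta(x_t)=\max_{i\in I(\bar x,d)}\theta_i(x_t)$ near $t=0$. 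For the surviving indices $\theta_i(\bar x)=\theta(\bar x)$ and $\theta_i'(\bar x;d)=\alpha$, so after subtracting $\theta(\bar x)+t\alpha$ and dividing by $\tfrac12 t^2$ the quotient for $\theta$ becomes the maximum of the quotients defining $\theta_i''(\bar x;d,w)$; a final commutation of the finite maximum with the limit gives $\theta''(\bar x;d,w)=\max\{\theta_i''(\bar x;d,w)\mid i\in I(\bar x,d)\}$ and shows this limit exists.

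The main obstacle is the second-order index reduction: one must confirm that indices with a strictly smaller first-order slope leave the active maximum \emph{before} passing to the limit. This is precisely a competition between the order-$t$ separation $t(\alpha-\theta_i'(\bar x;d))$ and the order-$t^2$ curvature corrections, and it works because second-order directional differentiability makes the remainder a genuine $\oo(t^2)$ while the $\theta_i''$ are finite. Once this reduction is secured, the remainder of the argument is the routine fact that the maximum of finitely many convergent difference quotients converges to the maximum of their limits.
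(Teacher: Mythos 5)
Your proof is correct, but it takes a different route from the paper: the paper does not argue this lemma directly at all, instead deriving it from the general chain rule for second-order directional derivatives \cite[Proposition~2.53]{BonnansShapiro2000} and pointing to \cite[Lemma~3.2]{BenTalZowe1982} and \cite[Section~4.1]{BonnansCominettiShapiro1999}, whereas you give a self-contained, elementary two-stage active-set reduction. Your first stage (dropping $i\notin I(\bar x)$ by continuity, then exchanging the finite maximum with the limit of difference quotients) and second stage (dropping $i\in I(\bar x)\setminus I(\bar x,d)$ along the parabolic path because the order-$t$ gap $t(\alpha-\theta_i'(\bar x;d))$ beats the $O(t^2)$ curvature terms) is exactly the mechanism that the chain-rule proof packages abstractly, and your version has the merit of making transparent \emph{why} the relevant index set shrinks from $I(\bar x)$ at first order to $I(\bar x,d)$ at second order; the citation route, by contrast, is shorter and applies to general compositions. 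One small caveat: your second-stage estimate $\theta_i(x_t)-\theta(\bar x)=t\,\theta_i'(\bar x;d)+\oo(t)$ is obtained from the parabolic expansion and hence implicitly assumes each $\theta_i''(\bar x;d,w)$ is finite, which you acknowledge but which the definition in the paper does not force (a Lipschitz, even $C^1$, function such as $x\mapsto|x|^{3/2}$ has $\theta''(0;d,w)=+\infty$). This is easily repaired without the finiteness assumption: Lipschitz continuity gives $|\theta_i(x_t)-\theta_i(\bar x+td)|\leq \tfrac12 Lt^2\norm{w}{2}$, and first-order directional differentiability (which is built into the definition of $\theta_i''$) gives $\theta_i(\bar x+td)-\theta_i(\bar x)=t\,\theta_i'(\bar x;d)+\oo(t)$, so the same $\oo(t)$ bound and hence the same gap argument go through; with that adjustment your argument is complete under the stated hypotheses.
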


In order to study sufficient optimality conditions with the aid of the second-order directional
derivative introduced above, the presence of an additional regularity condition is indispensable.
Here, we rely on the concept of second-order epi-regularity which dates back to
\cite{BonnansCominettiShapiro1999} and is studied in \cite[Sections~3.3.4 and 3.3.5]{BonnansShapiro2000}.
\begin{definition}\label{def:second_order_epiregularity}
	Let $\theta\colon\R^n\to\overline\R$ be a given function and fix $\bar x\in\operatorname{dom}\theta$
	where $\theta$ is second-order directionally differentiable.
	Then, $\theta$ is said to be second-order epi-regular at $\bar x$ in direction $d\in\R^n$ if
	for each path $w\colon\R_+\to\R^n$ which satisfies $t w(t)\to 0$ as $t\downarrow 0$, we have
	\[
		\theta(\bar x+td+\tfrac12t^2 w(t))
		\geq
		\theta(\bar x)+t\,\theta'(\bar x;d)+\tfrac12t^2\theta''(\bar x;d,w(t))+\oo(t^2).		
	\]
	We say that $\theta$ is second-order epi-regular at $\bar x$ if it is second-order epi-regular at $\bar x$
	in each direction from $\R^n$.
\end{definition}

Due to \eqref{eq:second_order_taylor_expansion} and \eqref{eq:second_order_directional_derivative_smooth_function},
each twice continuously differentiable functions is second-order epi-regular at each point
in each direction.
By definition, the sum of two second-order epi-regular functions is second-order epi-regular as well.
Furthermore, we note that a function $\theta\colon\R^n\to\overline\R$, which is second-order directionally
differentiable and locally Lipschitz continuous at $\bar x\in\dom\theta$, is
second-order epi-regular at $\bar x$ in direction $d\in\R^n$ if and only if the set
$\epi\theta$ is outer second-order regular at $(\bar x,\theta(\bar x))$, see
\cite[Definition~3.85, Proposition~3.95]{BonnansShapiro2000}, and the latter particularly holds
whenever $\epi\theta$ is a polyhedron.
\begin{lemma}\label{lem:second_order_epiregularity_max}
	Let $\theta_1,\ldots,\theta_p\colon\R^n\to\overline{\R}$ be Lipschitz continuous, second-order directionally differentiable, and
	second-order epi-regular at $\bar x\in\bigcap_{i=1}^p\dom\theta_i$.
	Furthermore, set $\theta:=\max\{\theta_1,\ldots,\theta_p\}$.
	Then, $\theta$ is second-order epi-regular at $\bar x$ as well.
\end{lemma}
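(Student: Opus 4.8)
The plan is to combine the second-order epi-regularity of the individual functions $\theta_i$ with the calculus rule for the second-order directional derivative of the pointwise maximum from \cref{lem:max_rule}. First I would fix an arbitrary direction $d\in\R^n$ together with a path $w\colon\R_+\to\R^n$ satisfying $tw(t)\to 0$ as $t\downarrow 0$. Invoking \cref{lem:max_rule}, $\theta$ is second-order directionally differentiable at $\bar x$, so that the right-hand side of the inequality from \cref{def:second_order_epiregularity} is well-defined for $\theta$, and it remains to verify that inequality along the fixed path.

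The central idea is to restrict attention to the active index set $I(\bar x,d)$. For each $i\in I(\bar x,d)$ one has $\theta_i(\bar x)=\theta(\bar x)$ as well as $\theta'_i(\bar x;d)=\theta'(\bar x;d)$ by definition of $I(\bar x)$ and $I(\bar x,d)$, so the assumed second-order epi-regularity of $\theta_i$ in direction $d$ yields, for some remainder $r_i$ with $r_i(t)=\oo(t^2)$, the estimate
\[
	\theta_i\bigl(\bar x+td+\tfrac12t^2w(t)\bigr)
	\geq
	\theta(\bar x)+t\,\theta'(\bar x;d)+\tfrac12t^2\theta''_i(\bar x;d,w(t))+r_i(t).
\]
The crucial observation is that $I(\bar x,d)\subseteq\{1,\dots,p\}$ is finite, so putting $r(t):=\min\{r_i(t)\,|\,i\in I(\bar x,d)\}$ produces a single remainder that still satisfies $r(t)=\oo(t^2)$; since $r(t)\leq r_i(t)$ for every $i\in I(\bar x,d)$, the displayed lower bound continues to hold for each such $i$ with this common, $i$-independent remainder $r(t)$.

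Finally I would use \cref{lem:max_rule} to write $\theta''(\bar x;d,w(t))=\max\{\theta''_i(\bar x;d,w(t))\,|\,i\in I(\bar x,d)\}$, and for every $t$ select an index $i^\ast(t)\in I(\bar x,d)$ attaining this maximum. Because $\theta\geq\theta_{i^\ast(t)}$ holds pointwise, chaining the previous estimate at $i=i^\ast(t)$ with the identity $\theta''_{i^\ast(t)}(\bar x;d,w(t))=\theta''(\bar x;d,w(t))$ gives
\[
	\theta\bigl(\bar x+td+\tfrac12t^2w(t)\bigr)
	\geq
	\theta(\bar x)+t\,\theta'(\bar x;d)+\tfrac12t^2\theta''(\bar x;d,w(t))+r(t),
\]
which is precisely the required inequality, as $r(t)=\oo(t^2)$. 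Since $d\in\R^n$ was arbitrary, $\theta$ is second-order epi-regular at $\bar x$.

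The main obstacle I anticipate is the uniformity of the remainder terms: since the maximizing index $i^\ast(t)$ may jump with $t$, one must guarantee that the errors $r_i(t)$ can be absorbed into a single $\oo(t^2)$ bound valid along the whole path, rather than merely pointwise in $t$. This is exactly what the finiteness of $I(\bar x,d)$ delivers through the minimum construction, and it is the only place where finiteness of the family $\theta_1,\dots,\theta_p$ is genuinely exploited; the Lipschitz hypothesis is used only implicitly, namely to ensure applicability of \cref{lem:max_rule}.
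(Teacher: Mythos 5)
Your proposal is correct and follows essentially the same route as the paper's proof: select, for each $t$, an index in $I(\bar x,d)$ realizing the maximum in the formula for $\theta''(\bar x;d,w(t))$ from \cref{lem:max_rule}, apply the epi-regularity estimate of that individual function, and absorb the individual remainders into a single $\oo(t^2)$ term by taking their minimum over the (finite) index set. The paper's proof does exactly this, merely taking the minimum of the remainders over all of $\{1,\ldots,p\}$ rather than over $I(\bar x,d)$, which is an immaterial difference.
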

\begin{proof}
	Fix an arbitrary direction $d\in\R^n$ and an arbitrary path $w\colon\R_+\to\R^n$ satisfying
	$t w(t)\to 0$ as $t\downarrow 0$. For each $t\in\R_+$, there is an active index $i_0(t)\in \{1,\ldots,p\}$
	which satisfies
		\[
		\theta(\bar x)=\theta_{i_0(t)}(\bar x),
		\qquad
		\theta'(\bar x;d)=\theta'_{i_0(t)}(\bar x;d),
		\qquad
		\theta''(\bar x;d,w(t))=\theta''_{i_0(t)}(\bar x;d,w(t)),
	\]
	see \cref{lem:max_rule}.
	Noting that all the functions $\theta_i$ are second-order epi-regular at $\bar x$ in direction $d$, it holds
	\[
		\theta_i(\bar x+td+\tfrac12t^2w(t))\geq\theta_i(\bar x)+t\theta_i'(\bar x;d)+\tfrac12t^2\theta_i''(\bar x;d,w(t))+\oo_i(t^2)
	\]
	for all $i=1,\ldots,p$.
	For each $t\in\R_+$, we set $\oo(t):=\min\{\oo_i(t)\,|\,i\in\{1,\ldots,p\}\}$.
	Then, we have
	\begin{align*}
		\theta(\bar x+td+\tfrac12 t^2w(t))
		&\geq
		\theta_{i_0(t)}(\bar x+td+\tfrac12 t^2w(t))\\
		&\geq
		\theta_{i_0(t)}(\bar x)+t\theta'_{i_0(t)}(\bar x;d)+\tfrac12 t^2\theta''_{i_0(t)}(\bar x;d,w(t))+\oo(t^2)\\
		&=\theta(\bar x)+t\theta'(\bar x;d)+\tfrac12 t^2\theta''(\bar x;d,w(t))+\oo(t^2).
	\end{align*}
	Since $d$ was chosen arbitrarily, the desired result follows.
\end{proof}

\subsection{Preliminaries on constrained programming}\label{sec:prelimiaries_nonlinear_programming}

In this section, we review some optimality conditions for the general optimization problem
\begin{equation}\label{eq:nonlinear_problem}\tag{P}
	\min\{\theta(x)\,|\,x\in X\},
\end{equation}
where $\theta\colon\R^n\to\overline\R$ is a given functional and $X\subseteq\R^n$ is a nonempty, closed set.
First of all, the following first-order necessary optimality condition can be easily proven using standard
arguments.
\begin{lemma}\label{lem:abstract_first_order_necessary_optimality_condition}
	Let $\bar x\in\dom\theta$ be a local minimizer of \eqref{eq:nonlinear_problem}, where $\theta$ is
	Hadamard directionally differentiable.
	Then, it holds that
	\[
		\forall d\in\mathcal T_X(\bar x)\colon\quad
		\theta^\ast(\bar x;d)\geq 0.
	\]
\end{lemma}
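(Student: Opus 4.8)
The plan is to prove the necessary optimality condition directly from the definitions of local minimizer, tangent cone, and Hadamard directional derivative. The statement claims that if $\bar x$ is a local minimizer of \eqref{eq:nonlinear_problem} and $\theta$ is Hadamard directionally differentiable there, then $\theta^\ast(\bar x;d) \geq 0$ for every tangent direction $d \in \mathcal{T}_X(\bar x)$. The key observation is that tangent directions are precisely those that can be approached along feasible sequences, and the Hadamard derivative is robust enough to accommodate the perturbed directions that arise from the tangent cone construction.

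First I would fix an arbitrary $d \in \mathcal{T}_X(\bar x)$. By the definition of the Bouligand tangent cone recalled in the excerpt, there exist sequences $\{t_k\}_{k\in\N} \subseteq \R$ with $t_k \downarrow 0$ and $\{d_k\}_{k\in\N} \subseteq \R^n$ with $d_k \to d$ such that $\bar x + t_k d_k \in X$ for all $k$. Since $\bar x$ is a local minimizer, for all sufficiently large $k$ the points $\bar x + t_k d_k$ lie in the neighborhood on which $\bar x$ is optimal, and they are feasible, so the optimality inequality gives $\theta(\bar x + t_k d_k) \geq \theta(\bar x)$. Rearranging yields
\[
	\frac{\theta(\bar x + t_k d_k) - \theta(\bar x)}{t_k} \geq 0
\]
for all large $k$. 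The crucial step is now to pass to the limit: because $t_k \downarrow 0$ and $d_k \to d$, the quotient on the left converges exactly to the Hadamard directional derivative $\theta^\ast(\bar x;d)$, which exists by hypothesis. Taking the limit preserves the inequality and produces $\theta^\ast(\bar x;d) \geq 0$, as desired.

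The main obstacle — really the only subtlety — is justifying that the difference quotient along the \emph{varying} directions $d_k$ converges to the Hadamard derivative rather than to the ordinary (Gâteaux) directional derivative. This is precisely where the Hadamard hypothesis is essential: the limit defining $\theta^\ast(\bar x;d)$ is taken as $t \downarrow 0$ \emph{and} $d' \to d$ jointly, so the sequence $(t_k, d_k)$ is admissible in that double limit and the quotient converges to $\theta^\ast(\bar x;d)$. Had we only assumed ordinary directional differentiability, the quotient with fixed direction would converge but we could not control the perturbation $d_k - d$, and the argument would break down unless additional structure (such as the feasibility sequence lying along a straight ray) were imposed. Since $d \in \mathcal{T}_X(\bar x)$ was arbitrary, the conclusion holds for all tangent directions, completing the proof.
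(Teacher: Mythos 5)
Your proof is correct: the paper itself omits the argument (stating only that the lemma ``can be easily proven using standard arguments''), and your sequence-based argument --- feasibility of $\bar x+t_kd_k$ plus local optimality gives nonnegative difference quotients, and Hadamard directional differentiability lets you pass to the limit along the varying directions $d_k\to d$ --- is precisely that standard argument. Your closing remark correctly identifies the only delicate point, namely that G\^{a}teaux directional differentiability alone would not suffice to handle the perturbed directions arising in the tangent cone definition.
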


Next, we want to deal with sufficient optimality conditions which address \eqref{eq:nonlinear_problem}.
To proceed, we use the notion of growth conditions.
\begin{definition}\label{def:growth_conditions}
	Fix $\bar x\in X$. We say that the growth condition of order $\alpha>0$ holds
	at $\bar x$ for \eqref{eq:nonlinear_problem} whenever there are a neighborhood $U\subseteq\R^n$ of $\bar x$
	and a constant $C>0$ such that the following is true:
	\[
		\forall x\in X\cap U\colon\quad
		\theta(x)\geq \theta(\bar x)+C\norm{x-\bar x}{2}^\alpha.
	\]
\end{definition}

Clearly, whenever the growth condition of order $\alpha>0$ holds at some point $\bar x\in X$ for \eqref{eq:nonlinear_problem},
then $\bar x$ is a strict local minimizer of \eqref{eq:nonlinear_problem}.
A sufficient optimality condition for \eqref{eq:nonlinear_problem} is said to be of order $\alpha>0$,
if it implies validity of the growth condition of order $\alpha$ for \eqref{eq:nonlinear_problem} at the reference
point. Sufficient optimality conditions from the literature are of order $1$ or $2$ in most of the cases.

Below, we first study a rather general first-order sufficient optimality condition for \eqref{eq:nonlinear_problem}.
The proof, although it is folklore, is included for the reader's convenience.
\begin{proposition}\label{prop:abstract_first_order_sufficient_condition}
	Let $\bar x\in \dom\theta$ be a feasible point of \eqref{eq:nonlinear_problem},
	where $\theta$ is Hadamard directionally differentiable
	and assume that
	\[
		\forall d\in\mathcal T_X(\bar x)\setminus\{0\}\colon\quad
		\theta^\ast(\bar x;d)>0.
	\]
	Then, the first-order growth condition holds at $\bar x$ for \eqref{eq:nonlinear_problem}.
	Particularly, $\bar x$ is a strict local minimizer of \eqref{eq:nonlinear_problem}.
\end{proposition}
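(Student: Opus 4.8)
The plan is to argue by contradiction. Suppose the first-order growth condition fails at $\bar x$. Then for every neighborhood $U$ of $\bar x$ and every constant $C>0$, there exists a feasible point $x\in X\cap U$ with $\theta(x)<\theta(\bar x)+C\norm{x-\bar x}{2}$. Taking $U$ to be a shrinking sequence of balls and $C=1/k$, I would extract a sequence $\{x_k\}_{k\in\N}\subseteq X$ with $x_k\to\bar x$, $x_k\neq\bar x$, and
\[
	\theta(x_k)<\theta(\bar x)+\tfrac1k\norm{x_k-\bar x}{2}.
\]
This encodes the idea that the objective grows strictly slower than linearly along the sequence.

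Next I would pass to directions. Set $t_k:=\norm{x_k-\bar x}{2}\downarrow 0$ and $d_k:=(x_k-\bar x)/t_k$, so that $\norm{d_k}{2}=1$ and $\bar x+t_kd_k=x_k\in X$ for all $k$. By compactness of the unit sphere in $\R^n$, I would pass to a subsequence (not relabeled) along which $d_k\to d$ for some $d$ with $\norm{d}{2}=1$, in particular $d\neq 0$. The definition of the tangent cone then gives $d\in\mathcal T_X(\bar x)$, and since $d\neq 0$, the hypothesis yields $\theta^\ast(\bar x;d)>0$.

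The final step is to derive the contradiction by evaluating the Hadamard directional derivative along this sequence. Rearranging the defining inequality for $x_k$ gives
\[
	\frac{\theta(\bar x+t_kd_k)-\theta(\bar x)}{t_k}<\frac1k\to 0.
\]
The left-hand side is exactly the difference quotient appearing in the definition of $\theta^\ast(\bar x;d)$, evaluated with $t_k\downarrow 0$ and $d_k\to d$. By the definition of Hadamard directional differentiability, the limit of these quotients equals $\theta^\ast(\bar x;d)$, so passing to the limit yields $\theta^\ast(\bar x;d)\leq 0$, contradicting $\theta^\ast(\bar x;d)>0$. Hence the first-order growth condition must hold, and the concluding remark after \cref{def:growth_conditions} immediately gives that $\bar x$ is a strict local minimizer.

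The main obstacle is ensuring that the extracted difference quotients genuinely converge to $\theta^\ast(\bar x;d)$ rather than to some other subsequential limit. This is precisely where Hadamard directional differentiability is essential: the limit in the definition is taken jointly over $t\downarrow 0$ \emph{and} $d'\to d$, so the existence of this joint limit guarantees that the specific approximating sequence $(t_k,d_k)\to(0^+,d)$ produces quotients converging to $\theta^\ast(\bar x;d)$. Had we only assumed ordinary (G\^{a}teaux) directional differentiability, the quotients along the varying directions $d_k$ need not converge to $\theta'(\bar x;d)$, and the argument would break down; thus the hypothesis must be invoked carefully at exactly this point.
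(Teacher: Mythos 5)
Your proposal is correct and follows essentially the same route as the paper's own proof: contradiction via a sequence violating linear growth, normalization $t_k:=\norm{x_k-\bar x}{2}$, $d_k:=(x_k-\bar x)/t_k$, compactness to extract a nonzero tangent direction $d\in\mathcal T_X(\bar x)$, and then Hadamard directional differentiability to pass to the limit in the difference quotients and contradict $\theta^\ast(\bar x;d)>0$. Your closing remark on why the joint limit in the Hadamard derivative is exactly what makes the argument work is also the key point implicit in the paper's computation.
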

\begin{proof}
	Assume on the contrary that the first-order growth condition does not hold at $\bar x$ for \eqref{eq:nonlinear_problem}.
	Then, we find a sequence $\{x_k\}_{k\in\N}\subseteq X$ converging to $\bar x$ such that
	\[
		\forall k\in\N\colon\quad \theta(x_k)<\theta(\bar x)+\tfrac1k\norm{x_k-\bar x}{2}
	\]
	holds true. We set $t_k:=\norm{x_k-\bar x}{2}$ and $d_k:=(x_k-\bar x)/t_k$ for each $k\in\N$ and observe $t_k\downarrow 0$.
	Furthermore, $\{d_k\}_{k\in\N}$ is bounded and converges w.l.o.g.\ to some $d\in\mathcal T_X(\bar x)\setminus\{0\}$.
	By definition of Hadamard's directional derivative, we obtain
	\begin{align*}
		\theta^\ast(\bar x;d)
		=
		\lim\limits_{k\to\infty}\frac{\theta(\bar x+t_kd_k)-\theta(\bar x)}{t_k}
		=
		\lim\limits_{k\to\infty}\frac{\theta(x_k)-\theta(\bar x)}{t_k}
		\leq
		\lim\limits_{k\to\infty}\frac{\norm{x_k-\bar x}{2}}{k\,t_k}
		=
		0
	\end{align*}
	which contradicts the proposition's assumptions.
\end{proof}

Observe that the necessary and sufficient first-order optimality conditions from
\cref{lem:abstract_first_order_necessary_optimality_condition} and
\cref{prop:abstract_first_order_sufficient_condition}, respectively, only differ w.r.t.\ the
appearing relation the Hadamard directional derivative has to satisfy for nonvanishing tangent
directions to $X$ at the reference point. That is why we speak of a pair of \emph{no-gap} first-order
optimality conditions.

In numerous models of mathematical optimization, the feasible set $X$ is described via
inequality constraints. Thus, let us assume for a moment that $X$ is given by
\begin{equation}\label{eq:feasible_set_via_inequalities}
	X:=\{x\in\R^n\,|\,\forall i\in\{1,\ldots,m\}\colon\,\theta_i(x)\leq 0\}
\end{equation}
where $\theta_1,\ldots,\theta_m\colon\R^n\to\overline{\R}$ are given functionals.
In order to apply the sufficient optimality condition from \cref{prop:abstract_first_order_sufficient_condition}
efficiently to this particular setting, one needs a computable upper approximate of
the tangent cone.
It is a standard idea in mathematical programming to exploit a linearization of the constraining functionals
$\theta_1,\ldots,\theta_m$ for that purpose.
Let us fix a point $\bar x\in X$ where the functionals $\theta_1,\ldots,\theta_m$
are locally Lipschitz continuous. We call
\[
	\mathcal L_X(\bar x):=\{d\in\R^n\,|\,\theta^-_i(\bar x;d)\leq 0\;\forall i\in I(\bar x)\}
\]
the linearization cone to $X$ at $\bar x$.
Above, we used $I(\bar x):=\{i\in\{1,\ldots,m\}\,|\,\theta_i(\bar x)=0\}$.
Observe that in case where the functionals $\theta_i$, $i\in I(\bar x)$, are continuously differentiable
at $\bar x$, then $\mathcal L_X(\bar x)$ coincides with the classical linearization cone of standard
nonlinear programming and is polyhedral. In the more general case discussed above, $\mathcal T_X(\bar x)$
is only a closed cone which does not need to be convex since the lower Dini directional derivative is
only positively homogeneous but not necessarily linear w.r.t.\ the direction.
By standard arguments, we obtain that $\mathcal L_X(\bar x)$ provides an upper estimate of $\mathcal T_X(\bar x)$.
\begin{lemma}\label{lem:Dini_linearization_cone}
	Let $X$ be given as in \eqref{eq:feasible_set_via_inequalities}.
	Fix $\bar x\in X$ and assume that $\theta_1,\ldots,\theta_m$ are locally Lipschitz
	continuous at $\bar x$.
	Then, we have $\mathcal T_X(\bar x)\subseteq\mathcal L_X(\bar x)$.
\end{lemma}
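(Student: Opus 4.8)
The plan is to verify the inclusion directly from the definitions, taking an arbitrary tangent direction and showing it satisfies each linearized constraint. The only nontrivial ingredient is the local Lipschitz continuity of the active constraint functions, which lets us pass from the perturbed directions $d_k$ appearing in the definition of the tangent cone to the fixed direction $d$ inside the lower Dini difference quotient.

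First I would fix $d\in\mathcal T_X(\bar x)$ and choose sequences $t_k\downarrow 0$ and $d_k\to d$ with $\bar x+t_kd_k\in X$ for all $k$, as provided by the definition of the tangent cone. Fixing an active index $i\in I(\bar x)$, feasibility of $\bar x+t_kd_k$ together with $\theta_i(\bar x)=0$ yields $\theta_i(\bar x+t_kd_k)\leq 0=\theta_i(\bar x)$, so that
\[
	\frac{\theta_i(\bar x+t_kd_k)-\theta_i(\bar x)}{t_k}\leq 0
\]
for every $k\in\N$. Next I would invoke local Lipschitz continuity of $\theta_i$ near $\bar x$, with modulus $L>0$, to replace $d_k$ by $d$ inside the difference quotient, obtaining
\[
	\frac{\theta_i(\bar x+t_kd)-\theta_i(\bar x)}{t_k}
	\leq
	\frac{\theta_i(\bar x+t_kd_k)-\theta_i(\bar x)}{t_k}+L\norm{d-d_k}{2}
	\leq
	L\norm{d-d_k}{2}
\]
for all sufficiently large $k$, so that both arguments lie in the Lipschitz neighborhood.

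Since the left-hand side is a difference quotient along the fixed direction $d$ and $\theta_i^-(\bar x;d)=\liminf_{t\downarrow 0}(\theta_i(\bar x+td)-\theta_i(\bar x))/t$ is bounded above by the lower limit along the particular null sequence $\{t_k\}_{k\in\N}$, passing to the limit $k\to\infty$ and using $d_k\to d$ gives $\theta_i^-(\bar x;d)\leq 0$. As the active index $i\in I(\bar x)$ was arbitrary, this shows $d\in\mathcal L_X(\bar x)$, and the inclusion follows. I expect the main subtlety to be the Lipschitz replacement of $d_k$ by $d$ in combination with the correct direction of the inequality for the $\liminf$; once that estimate is in place, the result is immediate.
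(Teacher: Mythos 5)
Your proof is correct. The paper itself gives no explicit argument for this lemma (it is stated as following ``by standard arguments''), and yours is exactly that standard argument: feasibility of $\bar x+t_kd_k$ makes the difference quotients along $d_k$ nonpositive, local Lipschitz continuity lets you replace $d_k$ by $d$ at a cost of $L\norm{d-d_k}{2}\to 0$, and the lower Dini derivative $\theta_i^-(\bar x;d)$ is bounded above by the limit inferior along the particular null sequence $\{t_k\}_{k\in\N}$, which yields $\theta_i^-(\bar x;d)\leq 0$ for every $i\in I(\bar x)$.
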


Clearly, the converse inclusion $\mathcal T_X(\bar x)\supseteq\mathcal L_X(\bar x)$
holds in general only under additional assumptions. However, equality would be beneficial
in order to obtain a first-order sufficient optimality condition in \cref{prop:abstract_first_order_sufficient_condition}
which is as weak as possible. Thinking about standard nonlinear programming,
one might be tempted to say that Abadie's Constraint Qualification (ACQ) holds at $\bar x$
whenever it holds $\mathcal T_X(\bar x)=\mathcal L_X(\bar x)$.
In case where the functions $\theta_1,\ldots,\theta_m$ are continuously differentiable,
there exist several constraint qualifications stronger than ACQ which are stated in
terms of initial problem data. In this manuscript, we refer to the Linear Independence
Constraint Qualification (LICQ), the Mangasarian--Fromovitz constraint qualification
(MFCQ), and the Constant Rank Constraint Qualification (CRCQ) in some situations,
see, e.g., \cite{BazaraaSheraliShetty1993,Janin1984} for the definitions and some
discussion.

Using \cref{lem:Dini_linearization_cone}, we obtain the following corollary from
\cref{prop:abstract_first_order_sufficient_condition}.
\begin{corollary}\label{cor:abstract_first_order_sufficient_condition_inequality_constraints}
	Let $\bar x\in\dom\theta$ be a feasible point of \eqref{eq:nonlinear_problem} where $X$
	is given as in \eqref{eq:feasible_set_via_inequalities}. Furthermore, assume that the
	functionals $\theta_1,\ldots,\theta_m$ are locally Lipschitz continuous at $\bar x$.
	Finally, suppose that the system
	\begin{align*}
		\theta^\ast(\bar x;d)&\leq 0\\
		\theta^-_i(\bar x;d)&\leq 0\quad i\in I(\bar x)
	\end{align*}
	does not possess a nontrivial solution.
	Then, the first-order growth condition holds at $\bar x$ for \eqref{eq:nonlinear_problem}.
\end{corollary}

Below, we combine the concepts of second-order directional differentiability and second-order epi-regularity in
order to state a sufficient second-order optimality condition which addresses \eqref{eq:nonlinear_problem}
whenever $X:=\R^n$ holds. We note, however, that this statement can be extended to inequality-constrained
mathematical problems and even to bilevel optimization as we will see in \cref{sec:second_order_optimality_conditions}.
The result and its proof can be found in
\cite[Proposition~2.1]{RueckmannShapiro2001}.
\begin{proposition}\label{prop:second_order_sufficient_condition_unconstrained}
	Set $X:=\R^n$.
	Fix $\bar x\in\dom\theta$ where
	$\theta$ is second-order directionally differentiable and second-order epi-regular.
	Furthermore, assume that
	\[
		\forall d\in\R^n\colon\quad
		\theta'(\bar x;d)\geq 0
	\]
	as well as
	\[
			\forall d\in\{r\in\R^n\,|\,\theta'(\bar x;r)=0\}\setminus\{0\}\colon\quad
			\inf\limits_{w\in\R^n}\theta''(\bar x;d,w)>0
	\]
	hold.
	Then, the second-order growth condition holds at $\bar x$ for \eqref{eq:nonlinear_problem}.
\end{proposition}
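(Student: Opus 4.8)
The plan is to argue by contradiction through a normalized \enquote{blow-up} of a hypothetical bad sequence, in the spirit of the proof of \cref{prop:abstract_first_order_sufficient_condition} but carried one order further. So I would suppose the second-order growth condition fails at $\bar x$. Since here $X=\R^n$, this produces a sequence $\{x_k\}_{k\in\N}$ with $x_k\to\bar x$, $x_k\neq\bar x$, and $\theta(x_k)<\theta(\bar x)+\tfrac1k\norm{x_k-\bar x}{2}^2$ for every $k$. Setting $t_k:=\norm{x_k-\bar x}{2}\downarrow 0$ and $d_k:=(x_k-\bar x)/t_k$, I pass to a subsequence so that $d_k\to d$ for some $d$ with $\norm{d}{2}=1$, hence $d\neq 0$.

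The device that brings the second-order directional derivative into play is to rewrite each $x_k$ in parabolic form about the fixed limiting direction $d$: one has $x_k=\bar x+t_kd+\tfrac12t_k^2w_k$ with $w_k:=2(d_k-d)/t_k$. The point is that $t_kw_k=2(d_k-d)\to 0$, so the $w_k$ are admissible curvature vectors, and interpolating them yields a path $w\colon\R_+\to\R^n$ with $tw(t)\to0$ to which the second-order epi-regularity of $\theta$ in direction $d$ applies. This gives the lower estimate $\theta(x_k)\geq\theta(\bar x)+t_k\theta'(\bar x;d)+\tfrac12t_k^2\theta''(\bar x;d,w_k)+\oo(t_k^2)$. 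Combining it with the growth-failure upper bound produces the master inequality
\[
	t_k\theta'(\bar x;d)+\tfrac12t_k^2\theta''(\bar x;d,w_k)+\oo(t_k^2)<\tfrac1kt_k^2.
\]

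From here I would close the argument in two steps. First, I show that the limiting direction is \emph{critical}, i.e.\ $\theta'(\bar x;d)=0$: the first-order hypothesis already gives $\theta'(\bar x;d)\geq0$, and the reverse inequality should follow by dividing the master inequality by $t_k$ and letting $k\to\infty$, the right-hand side being $o(t_k)$. Second, with $d$ critical the second-order hypothesis supplies $\gamma:=\inf_{w\in\R^n}\theta''(\bar x;d,w)>0$, so that $\theta''(\bar x;d,w_k)\geq\gamma$ for every $k$ \emph{uniformly} in the (possibly large) curvature vectors $w_k$. Substituting $\theta'(\bar x;d)=0$ and dividing the master inequality by $t_k^2$ then yields $\tfrac12\gamma+\oo(1)<\tfrac1k$, whence $\gamma\leq0$ in the limit, contradicting $\gamma>0$. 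This contradiction establishes the growth condition of order two.

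The main obstacle is the criticality step. When dividing the master inequality by $t_k$, the term $\tfrac12t_k\theta''(\bar x;d,w_k)$ is not obviously $o(1)$, because $w_k$ need not be bounded, and this is exactly the regime that is dangerous while the possibility $\theta'(\bar x;d)>0$ has not yet been excluded. I expect to resolve this by exploiting the full strength of second-order epi-regularity together with the local Lipschitz / Hadamard directional behaviour that $\theta$ enjoys in all the concrete classes treated later, so that the first-order increment of $\theta$ along the \emph{varying} directions $d_k$ is genuinely governed by $\theta'(\bar x;d)$; equivalently, one may split into the cases $\theta'(\bar x;d)>0$ and $\theta'(\bar x;d)=0$ and rule out the former directly via the first-order necessary condition of \cref{lem:abstract_first_order_necessary_optimality_condition}. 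Once criticality is secured, the second step is immediate, the uniform lower bound $\gamma>0$ doing all of the remaining work.
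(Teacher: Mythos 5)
Your architecture --- contradiction, blow-up $x_k=\bar x+t_kd_k$, parabolic rewriting $x_k=\bar x+t_kd+\tfrac12t_k^2w_k$ with $t_k\norm{w_k}{2}\to0$, epi-regularity along the induced path, and the uniform bound $\theta''(\bar x;d,w_k)\geq\inf_{w}\theta''(\bar x;d,w)>0$ in the critical case --- is exactly the right one; the paper in fact offers no proof of its own but refers to \cite[Proposition~2.1]{RueckmannShapiro2001}, where this very scheme is carried out, and your handling of the critical case is correct and complete. The problem is the step you yourself flag: proving that the limiting direction $d$ is critical. Under the hypotheses as stated (second-order directional differentiability plus second-order epi-regularity, nothing more), this step is not justified, and neither of your proposed repairs closes it.

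Concretely: after dividing the master inequality by $t_k$, the leftover term $\tfrac12t_k\theta''(\bar x;d,w_k)$ is not under control, because $\norm{w_k}{2}$ may grow like $\oo(1/t_k)$ and the stated hypotheses provide no lower bound, Lipschitz estimate, or continuity for the map $w\mapsto\theta''(\bar x;d,w)$ (such an estimate does follow from local Lipschitz continuity of $\theta$, but that is not assumed here). Your suggestion to instead invoke \cref{lem:abstract_first_order_necessary_optimality_condition} cannot work: that lemma presupposes Hadamard directional differentiability and that $\bar x$ is a local minimizer, and its conclusion is $\theta^\ast(\bar x;d)\geq0$ --- the wrong inequality for ruling out $\theta'(\bar x;d)>0$. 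What is actually needed is the reverse estimate $\theta^\ast(\bar x;d)\leq 0$ read off from the growth-failing sequence, i.e.\ the computation in the proof of \cref{prop:abstract_first_order_sufficient_condition}, namely $\theta^\ast(\bar x;d)=\lim_{k\to\infty}\bigl(\theta(x_k)-\theta(\bar x)\bigr)/t_k\leq\lim_{k\to\infty}t_k/k=0$, and this is precisely where the Hadamard property (equivalently, local Lipschitz continuity combined with directional differentiability) enters. So your first remedy is the mathematically correct one, but it imports a hypothesis absent from the statement of \cref{prop:second_order_sufficient_condition_unconstrained}; that hypothesis is available in every application made in the paper (in \cref{thm:abstract_second_order_condition} the relevant function $\psi$ is a maximum of smooth functions and the locally Lipschitzian $-\varphi$) and in the setting of \cite{RueckmannShapiro2001}, but as written your proof is incomplete at exactly the criticality step.
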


\section{Preliminaries from bilevel optimization}\label{sec:preliminaries_bilevel}

Let us provide some notation addressing \eqref{eq:BPP}. 
For later use, we introduce the lower level feasible set mapping $K\colon\R^n\rightrightarrows\R^m$ by
\[
	\forall x\in\R^n\colon\quad
		K(x):=\{y\in\R^m\,|\,g(x,y)\leq 0\}.
\]
Let $M\subseteq\R^n\times\R^m$ denote the feasible set of problem \eqref{eq:BPP} and 
$L\colon\R^n\times\R^m\times\R^q\to\R$ the lower level Lagrangian function defined by
\[
	\forall (x,y,\lambda)\in\R^n\times\R^m\times\R^q\colon\quad
	L(x,y,\lambda):=f(x,y)+\lambda^\top g(x,y).
\]
Using the latter notation, we can define the set of lower level Lagrange multipliers as
\[
	\Lambda(x,y):=\{\lambda\in\R^q\,|\,\nabla_yL(x,y,\lambda)=0,\,\lambda\geq 0,\,\lambda^\top g(x,y)=0\}
\]
for points $(x,y)\in\gph K$. For a point $(\bar x,\bar y)\in\gph K$, we use
\[
	\bar I^g=I^g(\bar x,\bar y):=\{i\in\{1,\ldots,q\}\,|\,g_i(\bar x,\bar y)=0\}
\]
in order to represent the lower level index set of active constraints w.r.t.\ $(\bar x,\bar y)$.
If $(\bar x,\bar y)$ satisfies $G(\bar x,\bar y)\leq 0$, the following definition is reasonable as well:
\[
		\bar I^G=I^G(\bar x,\bar y):=\{i\in\{1,\ldots,p\}\,|\,G_i(\bar x,\bar y)=0\}.
\]

The following lower level value function reformulation of problem \eqref{eq:BPP}
will be important for the developments in this paper:
\begin{equation}\label{eq:LLVF}\tag{LLVF}
    \min\limits_{x,y}\{F(x,y)\,|\,G(x,y)\leq 0, \; f(x,y)\leq \varphi(x), \; g(x,y)\leq 0 \}.
\end{equation}
This is a nonsmooth and nonconvex constrained optimization problem, given that the involved lower level value function $\varphi$ defined by
\begin{equation}\label{eq:varphi}
   \forall x\in\R^n\colon\quad \varphi(x) := \inf\limits_y\{f(x,y)\,|\,g(x,y)\leq 0\}
\end{equation}
is typically nondifferentiable. Even when all the functions involved in \eqref{eq:BPP} are fully convex, i.e., 
$f$ and $g$ are convex w.r.t.\
$(x,y)$, the feasible set of problem \eqref{eq:LLVF} is still typically nonconvex.
Furthermore, it is folklore that \eqref{eq:LLVF} is irregular in the sense that standard constraint qualifications
from nonsmooth programming do not hold at \emph{all} feasible points of \eqref{eq:LLVF}.
However, we note that \eqref{eq:BPP} and \eqref{eq:LLVF} are fully equivalent optimization problems.

We fix some point $(\bar x,\bar y)\in\gph K$.
Generally, we say that a constraint qualification holds for the lower level problem \eqref{eq:lower-level}
at $(\bar x,\bar y)$ if it is valid at the point $\bar y$ within the set $K(\bar x)$ for fixed parameter.
Exemplary, the lower level linear independence constraint qualification (LLICQ) will be said to hold at $(\bar x, \bar y)$
if the family $\{\nabla_yg_i(\bar x,\bar y)\}_{i\in\bar I^g}$ of gradients is linearly independent.
Supposing that $(\bar x,\bar y)\in\gph S$ is given such that a constraint qualification holds for \eqref{eq:lower-level}
at this point, the set $\Lambda(\bar x,\bar y)$ is nonempty.
For $\lambda\in\Lambda(\bar x,\bar y)$, we introduce the lower level critical cone at $(\bar x,\bar y)$ as stated below:
\[
	\mathcal C^L(\bar x,\bar y)
	:=
	\left\{
		\delta_y\in\R^m
		\,\middle|\,
		\begin{aligned}
			\nabla_yg_i(\bar x,\bar y)^\top\delta_y&\,\leq\, 0\;&&i\in\bar I^g,\,\lambda_i=0\\
			\nabla_yg_i(\bar x,\bar y)^\top\delta_y&\,=\,0\;&&i\in\bar I^g,\,\lambda_i>0
		\end{aligned}
	\right\}.
\]
It is well known that this definition actually does not depend on the particular choice of the multiplier since we also have
\begin{equation}\label{eq:lower_level_critical_cone}
	\mathcal C^L(\bar x,\bar y)
	=
	\left\{
		\delta_y\in\R^m
		\,\middle|\,
		\begin{aligned}
			\nabla_yf(\bar x,\bar y)^\top\delta_y&\,=\, 0\;&&\\
			\nabla_yg_i(\bar x,\bar y)^\top\delta_y&\,\leq\,0\;&&i\in\bar I^g
		\end{aligned}
	\right\}
\end{equation}
by elementary calculations exploiting the definition of $\Lambda(\bar x,\bar y)$.
Note that under validity of LLICQ, the set $\Lambda(\bar x,\bar y)$ is a singleton.
In this situation, lower level Strict Complementarity (LSC) is said to hold whenever $\bar\lambda_j>0$
is valid for all $j\in\bar I^g$ where $\bar\lambda$ is the unique element from $\Lambda(\bar x,\bar y)$.
Finally, the lower level Second-Order Sufficient Condition (LSOSC) is said to hold at $(\bar x, \bar y)$ if we have
\[
	\forall \delta_y\in\mathcal C^L(\bar x,\bar y)\setminus\{0\}
	\;\, \exists\lambda\in\Lambda(\bar x,\bar y)\colon\quad
	\delta_y^{\top}\nabla_{yy}^2 L(\bar x, \bar y, \lambda)\delta_y>0.
\]

Next, we briefly recall a partial exact penalization principle addressing \eqref{eq:LLVF} popular in bilevel programming.
Therefore, we fix a local minimizer $(\bar x,\bar y)\in\R^n\times\R^m$ of \eqref{eq:BPP}.
For a neighborhood $U\subseteq\R^n\times\R^m\times\R$ of $(\bar{x} ,\bar{y},0)$
and define the set of locally feasible points of the problem resulting from a perturbation on the value function of problem \eqref{eq:LLVF} by
\[
	\mathcal{F}_U
	:=
	\left\{
		(x,y,\varsigma)\in U\,|\,
		G(x,y)\leq 0,\,f(x,y)- \varphi(x)+\varsigma=0,\,g(x,y)\leq 0
	\right\}.
\]
Problem \eqref{eq:LLVF} will be said to be \emph{partially calm} at $(\bar{x} ,\bar{y})$
if there exist $\kappa>0$ and a neighborhood $U\subseteq\R^n\times\R^m\times\R$ of $(\bar{x},\bar{y},0)$ such that
\[
	\forall (x,y,\varsigma)\in\mathcal F_U\colon\quad
  	F(x,y)-F(\bar{x},\bar{y})+\kappa |\varsigma|\geq 0.
\]
Partial calmness dates back to the seminal paper \cite{YeZhu1995} where the authors show that \eqref{eq:LLVF} is
partially calm at one of its local minimizers $(\bar x,\bar y)$ if and only if there is some $\bar\kappa>0$ such that
$(\bar x,\bar y)$ is also a local minimizer of the problem
\begin{equation}\label{eq:partially_penalized_LLVF}
	\min\limits_{x,y}\{F(x,y)+\kappa(f(x,y)-\varphi(x))\,|\,G(x,y)\leq 0,\,g(x,y)\leq 0\}
\end{equation}
for each $\kappa\geq\bar\kappa$.
Clearly, dealing with \eqref{eq:partially_penalized_LLVF} is beneficial since standard constraint qualifications may
hold at the feasible points of this problem. As a consequence, KKT-type necessary optimality conditions, which exploit
some generalized derivative of $\varphi$, can be used to characterize the local minimizer $(\bar x,\bar y)$.
Observe that the resulting KKT-system may be interpreted as the KKT-system of \eqref{eq:LLVF} where the
penalization parameter $\kappa$ plays the role of the Lagrange multiplier associated with the constraint
$f(x,y)-\varphi(x)\leq 0$.
In \cite[Section~4]{DempeZemkoho2013}, \cite[Section~6]{Mordukhovich2018}, and \cite{YeZhu1995}, the authors present
several conditions which ensure validity of partial calmness at local minimizers.
Exemplary, let us mention that bilevel optimization problems with fully linear lower level are partially calm at
all their local minimizers due to \cite[Proposition~4.1]{YeZhu1995}.
In general, however, as highlighted in \cite{HenrionSurowiec2011}, this property is very restrictive.

\section{First-order sufficient optimality conditions}\label{sec:first_order_sufficient_conditions}

As already mentioned in \cref{sec:prelimiaries_nonlinear_programming}, the term \emph{first-order}
refers to the fact that the optimality conditions derived in this section imply the fulfillment of 
the first-order growth condition at the reference point for \eqref{eq:BPP}.
As we will see, depending on the exploited approach, the obtained sufficient optimality
conditions may contain first- or second-order derivative information of the lower level program, i.e.\
we would like to make clear that a first-order sufficient optimality condition for \eqref{eq:BPP}
still may comprise derivative information of order higher than one.

From \cref{prop:abstract_first_order_sufficient_condition}, we obtain the following general sufficient optimality
condition which will be the base of our analysis in this section.
Recall that the set $M$ denotes the feasible set of \eqref{eq:BPP}.
\begin{lemma}\label{lem:trivial_first_order_condition}
	Let $(\bar x,\bar y)\in\R^n\times\R^m$ be a feasible point of \eqref{eq:BPP} such that the system
	\[
		\nabla F(\bar x,\bar y)^\top d\leq 0
		\qquad
		d\in\mathcal T_M(\bar x,\bar y)
	\]
	does not possess a nontrivial solution. Then, the first-order growth condition holds at
	$(\bar x,\bar y)$ for \eqref{eq:BPP}. Particularly, $(\bar x,\bar y)$ is a strict local minimizer of \eqref{eq:BPP}.
\end{lemma}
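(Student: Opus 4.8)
The plan is to recognize this statement as a direct specialization of \cref{prop:abstract_first_order_sufficient_condition} to the bilevel program \eqref{eq:BPP}. Concretely, I would identify the abstract problem \eqref{eq:nonlinear_problem}, posed over $\R^{n+m}$, with \eqref{eq:BPP} by setting $\theta:=F$ and $X:=M$, and by taking the reference point to be $(\bar x,\bar y)$. Only two ingredients then need to be checked: that the objective meets the differentiability hypothesis of the proposition, and that the stated condition on the system translates exactly into the strict-positivity requirement appearing there.

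For the first ingredient, I would invoke the fact recalled in \cref{sec:generalized_differentiation} that a continuously differentiable function is Hadamard directionally differentiable, with its Hadamard directional derivative given by the gradient pairing. Since $F$ is assumed twice continuously differentiable, it is in particular Hadamard directionally differentiable at $(\bar x,\bar y)$, and one has $F^\ast(\bar x,\bar y;d)=\nabla F(\bar x,\bar y)^\top d$ for every $d\in\R^{n+m}$.

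For the second ingredient, I would argue by contraposition. The assertion that the system $\nabla F(\bar x,\bar y)^\top d\leq 0$, $d\in\mathcal T_M(\bar x,\bar y)$ admits no nontrivial solution means precisely that no $d\in\mathcal T_M(\bar x,\bar y)\setminus\{0\}$ satisfies $\nabla F(\bar x,\bar y)^\top d\leq 0$, i.e., that every such $d$ obeys $\nabla F(\bar x,\bar y)^\top d>0$. Combining this with the identity from the first step yields $F^\ast(\bar x,\bar y;d)>0$ for all $d\in\mathcal T_M(\bar x,\bar y)\setminus\{0\}$, which is verbatim the hypothesis of \cref{prop:abstract_first_order_sufficient_condition}. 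Applying that proposition then delivers the first-order growth condition at $(\bar x,\bar y)$ for \eqref{eq:BPP} and, consequently, that $(\bar x,\bar y)$ is a strict local minimizer.

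Since the argument is a one-to-one transcription of an already-established abstract result, I do not anticipate any genuine obstacle; the only point requiring care is the elementary reformulation of the phrase \enquote{does not possess a nontrivial solution} into the strict inequality over the tangent cone, together with the observation that $F^\ast$ collapses to the gradient pairing because $F$ is smooth.
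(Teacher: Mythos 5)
Your proof is correct and takes essentially the same route as the paper, which obtains this lemma directly from \cref{prop:abstract_first_order_sufficient_condition} by specializing $\theta:=F$ and $X:=M$ (the paper leaves the details implicit). Your verification that $F^\ast((\bar x,\bar y);\cdot)$ collapses to the gradient pairing by smoothness of $F$, and that the no-nontrivial-solution hypothesis is exactly strict positivity of $\nabla F(\bar x,\bar y)^\top d$ over $\mathcal T_M(\bar x,\bar y)\setminus\{0\}$, faithfully fills in those omitted details.
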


The above lemma is of limited practical use as long as no reasonable upper estimate of $\mathcal T_M(\bar x,\bar y)$
in terms of initial problem data is available. Here, we are going to discuss such estimates.

\subsection{Value-function approach}

Using the optimal value function $\varphi$, $M$ possesses the equivalent representation
\[
	M =\left\{(x,y)\in\R^n\times\R^m\,|\,G(x,y)\leq 0, \; f(x,y)\leq \varphi(x), \; g(x,y)\leq 0 \right\}.
\]
Thus, the computation of an upper estimate of $\mathcal T_M(\bar x,\bar y)$ can be carried with the
aid of \cref{lem:Dini_linearization_cone}, and \cref{cor:abstract_first_order_sufficient_condition_inequality_constraints}
yields a sufficient optimality condition for \eqref{eq:BPP}.
\begin{theorem}\label{thm:first_order_sufficient_condition}
	Let $(\bar x,\bar y)\in\R^n\times\R^m$ be a feasible point of \eqref{eq:BPP}.
	Assume that $K$ is locally bounded at $\bar x$ while LMFCQ holds at all
	points $(\bar x,y)\in\gph S$.
	Furthermore, assume that the system
	\begin{subequations}\label{eq:first_order_sufficient_condition_system_abstract}
		\begin{align}
			\label{eq:first_order_sufficient_condition_system_abstract_F}
			\nabla F(\bar x,\bar y)^\top d&\,\leq\,0\\
			\label{eq:first_order_sufficient_condition_system_abstract_G}
			\nabla G_i(\bar x,\bar y)^\top d&\,\leq\,0
				\quad i\in \bar I^G\\
			\label{eq:first_order_sufficient_condition_system_abstract_f}
			\nabla f(\bar x,\bar y)^\top d-\varphi^+(\bar x;\delta_x)&\,\leq\,0\\
			\label{eq:first_order_sufficient_condition_system_abstract_g}
			\nabla g_i(\bar x,\bar y)^\top d&\,\leq\,0\quad i\in \bar I^g
		\end{align}
	\end{subequations}
	does not possess a nontrivial solution $d :=(\delta_x,\delta_y)\in\R^n\times\R^m$.
	Then, $(\bar x,\bar y)$ is a strict local minimizer of \eqref{eq:BPP}.
\end{theorem}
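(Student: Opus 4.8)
The plan is to rewrite the feasible set through its value function representation
$M=\{(x,y)\,|\,G(x,y)\leq 0,\,f(x,y)-\varphi(x)\leq 0,\,g(x,y)\leq 0\}$
and then to invoke \cref{cor:abstract_first_order_sufficient_condition_inequality_constraints} with objective $\theta:=F$ and constraint functionals $G_1,\ldots,G_p$, $\psi:=f-\varphi$, and $g_1,\ldots,g_q$. Since $(\bar x,\bar y)$ is feasible for \eqref{eq:BPP}, we have $\bar y\in S(\bar x)$, hence $f(\bar x,\bar y)=\varphi(\bar x)$; thus the constraint $\psi\leq 0$ is active at $(\bar x,\bar y)$ alongside the constraints indexed by $\bar I^G$ and $\bar I^g$, while the remaining $G_i,g_i$ are inactive and drop out of the active-index sets. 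The goal is to show that the system appearing in the corollary reduces precisely to \eqref{eq:first_order_sufficient_condition_system_abstract}, after which the assumed absence of a nontrivial solution yields the first-order growth condition and therewith strict local optimality, via \cref{prop:abstract_first_order_sufficient_condition}.

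The main obstacle, and the only place where the structural hypotheses enter, is verifying that the constraint functionals are locally Lipschitz continuous at $(\bar x,\bar y)$, as required by \cref{cor:abstract_first_order_sufficient_condition_inequality_constraints}. For the smooth data $F,G_i,g_i$ this is immediate, so everything hinges on $\psi=f-\varphi$, i.e.\ on the local Lipschitz continuity of the optimal value function $\varphi$ near $\bar x$. Here I would invoke the classical Lipschitzian stability theory for parametric optimization: local boundedness of $K$ at $\bar x$ forces $S(\bar x)$ to be nonempty and compact and keeps the minimizers uniformly bounded on a neighborhood of $\bar x$, whereas LMFCQ at every point $(\bar x,y)\in\gph S$ delivers a uniform metric regularity estimate for the lower level feasible set mapping. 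Combining these two ingredients is exactly what is needed to conclude that $\varphi$ is locally Lipschitz continuous around $\bar x$; this is the technically delicate step and the reason both hypotheses are imposed together.

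Granting this, the remaining steps are routine derivative computations. As $F$ is continuously differentiable, its Hadamard directional derivative equals $F^\ast(\bar x,\bar y;d)=\nabla F(\bar x,\bar y)^\top d$, recovering \eqref{eq:first_order_sufficient_condition_system_abstract_F}. For the smooth active constraints $G_i$ ($i\in\bar I^G$) and $g_i$ ($i\in\bar I^g$), the lower Dini directional derivative coincides with the gradient pairings $\nabla G_i(\bar x,\bar y)^\top d$ and $\nabla g_i(\bar x,\bar y)^\top d$, giving \eqref{eq:first_order_sufficient_condition_system_abstract_G} and \eqref{eq:first_order_sufficient_condition_system_abstract_g}. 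The one genuinely nonsmooth contribution is the lower Dini derivative of $\psi$: writing $d=(\delta_x,\delta_y)$ and using that the $f$-part converges while $\liminf$ of the negated difference quotient equals the negative $\limsup$, one obtains $\psi^-(\bar x,\bar y;d)=\nabla f(\bar x,\bar y)^\top d-\varphi^+(\bar x;\delta_x)$, which is exactly the left-hand side of \eqref{eq:first_order_sufficient_condition_system_abstract_f}. Hence the system of \cref{cor:abstract_first_order_sufficient_condition_inequality_constraints} is identical to \eqref{eq:first_order_sufficient_condition_system_abstract}; by assumption it admits only the trivial solution, so the corollary yields the first-order growth condition at $(\bar x,\bar y)$ for \eqref{eq:BPP}, and in particular $(\bar x,\bar y)$ is a strict local minimizer.
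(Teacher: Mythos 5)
Your proposal is correct and follows essentially the same route as the paper's own proof: both pass to the value function reformulation \eqref{eq:LLVF}, secure local Lipschitz continuity of $\varphi$ at $\bar x$ from the local boundedness of $K$ and LMFCQ (the paper cites \cite[Theorem~4.17]{Dempe2002} for this), and then apply \cref{cor:abstract_first_order_sufficient_condition_inequality_constraints} together with the identity $(f-\varphi)^-((\bar x,\bar y);d)=\nabla f(\bar x,\bar y)^\top d-\varphi^+(\bar x;\delta_x)$. Your write-up merely spells out the derivative computations and the stability argument in more detail than the paper does.
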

\begin{proof}
	The assumptions of the theorem guarantee that $\varphi$ is locally Lipschitz continuous
	at $\bar x$, see, e.g., \cite[Theorem~4.17]{Dempe2002}.
	Thus, we can apply \cref{cor:abstract_first_order_sufficient_condition_inequality_constraints} to \eqref{eq:LLVF}.
	The result follows observing that
	\begin{align*}
		(f-\varphi)^-((\bar x,\bar y);d)
		=
		\nabla f(\bar x,\bar y)^\top d+(-\varphi)^-(\bar x;\delta_x)
		=
		\nabla f(\bar x,\bar y)^\top d-\varphi^+(\bar x;\delta_x)
	\end{align*}
	holds by definition of the upper Dini directional derivative
	for all $d :=(\delta_x,\delta_y)\in\R^n\times\R^m$.
\end{proof}
\begin{example}\label{ex:first_order_sufficient_condition}
	 Let us consider the bilevel programming problem
	 \[
	 	\min\limits_{x,y}\left\{\tfrac12(x+3)^2+\tfrac12(y+1)^2\,\middle|\,y\in S(x)\right\}
	 \]
	 where $S\colon\R\rightrightarrows\R$ is the solution set mapping of
	 $\min_y\{xy\,|\,0\leq y\leq 1\}$.
	 Noting that the lower level feasible set is independent of $x$, compact, and regular,
	 the associated optimal value function is locally Lipschitz continuous.
	 Indeed, a simple calculation shows
	 \[
	 	\forall x\in\R\colon\quad
	 	S(x)=
	 		\begin{cases}
	 			\{1\}	&x<0,\\
	 			[0,1]	&x=0,\\
	 			\{0\}	&x>0,
	 		\end{cases}
	 	\qquad \mbox{ and }\qquad
		\varphi(x)=\min\{x;0\}.
	 \]
	Let us consider the point $(\bar x,\bar y):=(0,0)$.
	The associated system \eqref{eq:first_order_sufficient_condition_system_abstract} reads as
	 \begin{align*}
	 	3\delta_x+\delta_y \leq 0,	\quad
 		-\min\{\delta_x;0\}\leq 0,	\quad
	 	-\delta_y&\,\leq\,0.
	 \end{align*}
	 Clearly, this system possesses only the trivial solution $(\delta_x,\delta_y)=(0,0)$ which is why
	 $(\bar x,\bar y)$ is a strict local minimizer of the considered bilevel optimization problem.
\end{example}

Below, we present some corollaries of \cref{thm:first_order_sufficient_condition} where the
abstract upper Dini derivative of $\varphi$ is approximated in terms of initial problem data.
Under the assumptions of \cref{thm:first_order_sufficient_condition}, the upper Dini directional
derivative of $\varphi$ can be estimated from above, see \cite[Theorem~4.15]{Dempe2002}.
Using this approximate, the following corollary follows easily.
\begin{corollary}\label{cor:approximate_upper_Dini_derivative_from_below}
	Let $(\bar x,\bar y)\in\R^n\times\R^m$ be a feasible point of \eqref{eq:BPP}.
	Assume that $K$ is locally bounded at $\bar x$ while LMFCQ holds at all
	points $(\bar x,y)\in\gph S$. Finally, assume that the system
	\eqref{eq:first_order_sufficient_condition_system_abstract_F},
	\eqref{eq:first_order_sufficient_condition_system_abstract_G},
	\eqref{eq:first_order_sufficient_condition_system_abstract_g}, and
	\[
		\nabla_yf(\bar x,\bar y)^\top \delta_y
		-\inf\limits_{y\in S(\bar x)}\max\limits_{\lambda\in\Lambda(\bar x,y)}
		\left((\nabla_xf(\bar x,y)-\nabla_xf(\bar x,\bar y))^\top \delta_x+\lambda^\top\nabla_xg(\bar x,y)\delta_x\right)
		\leq 0
	\]
	does not possess a nontrivial solution $d :=(\delta_x,\delta_y)\in\R^n\times\R^m$.
	Then, $(\bar x,\bar y)$ is a strict local minimizer of \eqref{eq:BPP}.
\end{corollary}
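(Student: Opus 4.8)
The plan is to obtain this corollary as a direct consequence of \cref{thm:first_order_sufficient_condition}, by replacing the abstract quantity $\varphi^+(\bar x;\delta_x)$ appearing in \eqref{eq:first_order_sufficient_condition_system_abstract_f} with a computable upper bound and verifying that this substitution only enlarges the solution set of the underlying system. Since the standing assumptions here (local boundedness of $K$ at $\bar x$ and validity of LMFCQ along $\gph S$) are precisely those of the theorem, $\varphi$ is locally Lipschitz continuous at $\bar x$, and by \cite[Theorem~4.15]{Dempe2002} its upper Dini derivative admits the estimate
\[
	\varphi^+(\bar x;\delta_x)
	\le
	\inf_{y\in S(\bar x)}\max_{\lambda\in\Lambda(\bar x,y)}
	\left(\nabla_x f(\bar x,y)^\top\delta_x+\lambda^\top\nabla_x g(\bar x,y)\delta_x\right)
	=:\Psi(\delta_x)
\]
for every $\delta_x\in\R^n$.

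First I would record a purely algebraic observation: the new (fourth) inequality of the corollary coincides with $\nabla F$-free constraint $\nabla f(\bar x,\bar y)^\top d-\Psi(\delta_x)\le 0$. Indeed, the term $-\nabla_x f(\bar x,\bar y)^\top\delta_x$ is constant with respect to $y$ and $\lambda$, so it can be pulled out of both the inner $\max$ and the outer $\inf$; combining it with $\nabla_yf(\bar x,\bar y)^\top\delta_y$ reconstitutes $\nabla f(\bar x,\bar y)^\top d$, turning the corollary's inequality into exactly $\nabla f(\bar x,\bar y)^\top d-\Psi(\delta_x)\le 0$.

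Next comes the comparison of the two systems. The inequalities \eqref{eq:first_order_sufficient_condition_system_abstract_F}, \eqref{eq:first_order_sufficient_condition_system_abstract_G}, and \eqref{eq:first_order_sufficient_condition_system_abstract_g} occur verbatim in both systems, so only the $f$-row differs. Because $\Psi(\delta_x)\ge\varphi^+(\bar x;\delta_x)$, one has $\nabla f(\bar x,\bar y)^\top d-\Psi(\delta_x)\le\nabla f(\bar x,\bar y)^\top d-\varphi^+(\bar x;\delta_x)$, so every $d$ satisfying the theorem's inequality \eqref{eq:first_order_sufficient_condition_system_abstract_f} automatically satisfies the corollary's $f$-row. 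Hence each solution of the theorem's system \eqref{eq:first_order_sufficient_condition_system_abstract} is a solution of the corollary's system. Reading this contrapositively, the corollary's hypothesis that its system has no nontrivial solution forces \eqref{eq:first_order_sufficient_condition_system_abstract} to possess only the trivial solution, whereupon \cref{thm:first_order_sufficient_condition} yields that $(\bar x,\bar y)$ is a strict local minimizer of \eqref{eq:BPP}.

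The routine steps are the local Lipschitz continuity of $\varphi$ and the factorization of the constant direction; the only point that demands care is the orientation of the estimate. Replacing $\varphi^+$ by the larger quantity $\Psi$ \emph{relaxes} the $f$-constraint and therefore \emph{enlarges} the solution set of the system, so that excluding nontrivial solutions of the estimated, data-driven system is a genuinely stronger requirement than excluding them for the original system, and is thus sufficient. Getting this monotonicity the right way round is the crux; everything else is bookkeeping.
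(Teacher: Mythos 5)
Your proof is correct and follows exactly the route the paper intends: it invokes \cref{thm:first_order_sufficient_condition} together with the upper estimate $\varphi^+(\bar x;\delta_x)\le\inf_{y\in S(\bar x)}\max_{\lambda\in\Lambda(\bar x,y)}\nabla_xL(\bar x,y,\lambda)^\top\delta_x$ from \cite[Theorem~4.15]{Dempe2002}, and you get the monotonicity direction right -- the enlarged bound relaxes the $f$-row, so insolvability of the data-driven system implies insolvability of \eqref{eq:first_order_sufficient_condition_system_abstract}. The algebraic step of pulling the constant term $-\nabla_xf(\bar x,\bar y)^\top\delta_x$ out of the $\inf$-$\max$ is also the correct reconciliation of the corollary's displayed inequality with \eqref{eq:first_order_sufficient_condition_system_abstract_f}.
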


In case where the lower level problem is convex w.r.t.\ $y$, i.e., if the lower level data functions
$f$ and $g_1,\ldots,g_q$ are convex w.r.t.\ $y$ for each choice of the parameter $x$, the assumptions of
\cref{thm:first_order_sufficient_condition} already guarantee that $\varphi$ is
directionally differentiable at the reference point, see \cite[Theorem~4.16]{Dempe2002}.
This result already provides a ready-to-use formula for the directional derivative
which yields a slightly better result than the one presented in
\cref{cor:approximate_upper_Dini_derivative_from_below}.
\begin{corollary}
	Let $(\bar x,\bar y)\in\R^n\times\R^m$ be a feasible point of \eqref{eq:BPP}
	where the lower level problem is convex w.r.t.\ $y$.
	Assume that $K$ is locally bounded at $\bar x$ while LMFCQ holds at all
	points $(\bar x,y)\in\gph S$. Finally, assume that the system
	\eqref{eq:first_order_sufficient_condition_system_abstract_F},
	\eqref{eq:first_order_sufficient_condition_system_abstract_G},
	\eqref{eq:first_order_sufficient_condition_system_abstract_g}, and
	\[
		\nabla_yf(\bar x,\bar y)^\top \delta_y
		-\min\limits_{y\in S(\bar x)}\max\limits_{\lambda\in\Lambda(\bar x,y)}
		\left((\nabla_xf(\bar x,y)-\nabla_xf(\bar x,\bar y))^\top \delta_x+\lambda^\top\nabla_xg(\bar x,y)\delta_x\right)
		\leq 0
	\]
	does not possess a nontrivial solution $d :=(\delta_x,\delta_y)\in\R^n\times\R^m$.
	Then, $(\bar x,\bar y)$ is a strict local minimizer of \eqref{eq:BPP}.
\end{corollary}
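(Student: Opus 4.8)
The plan is to mirror the argument behind \cref{cor:approximate_upper_Dini_derivative_from_below}, but to replace the mere upper estimate of the Dini derivative by the exact directional-derivative formula that becomes available under lower level convexity. First I would note that feasibility of $(\bar x,\bar y)$ for \eqref{eq:BPP} forces $\bar y\in S(\bar x)$, and that the standing hypotheses — local boundedness of $K$ at $\bar x$ and validity of LMFCQ at all $(\bar x,y)\in\gph S$ — together with convexity of $f$ and $g_1,\ldots,g_q$ w.r.t.\ $y$ put us in the setting of \cite[Theorem~4.16]{Dempe2002}. That result yields directional differentiability of $\varphi$ at $\bar x$ with
\[
    \varphi'(\bar x;\delta_x)
    =
    \min_{y\in S(\bar x)}\max_{\lambda\in\Lambda(\bar x,y)}
    \left(\nabla_x f(\bar x,y)^\top \delta_x + \lambda^\top\nabla_x g(\bar x,y)\delta_x\right)
\]
for every $\delta_x\in\R^n$. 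Since the two-sided limit defining $\varphi'(\bar x;\delta_x)$ now exists, the upper Dini derivative coincides with it, i.e.\ $\varphi^+(\bar x;\delta_x)=\varphi'(\bar x;\delta_x)$, so this closed form may legitimately be substituted wherever $\varphi^+$ appears in \cref{thm:first_order_sufficient_condition}.

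The second step is a purely algebraic rewriting of the decisive inequality \eqref{eq:first_order_sufficient_condition_system_abstract_f}. Expanding $\nabla f(\bar x,\bar y)^\top d=\nabla_x f(\bar x,\bar y)^\top\delta_x+\nabla_y f(\bar x,\bar y)^\top\delta_y$ and inserting the formula above, the term $\nabla_x f(\bar x,\bar y)^\top\delta_x$ is constant w.r.t.\ both $\lambda$ and $y$, hence passes through the inner maximum and then through the outer minimum. This transforms the left-hand side of \eqref{eq:first_order_sufficient_condition_system_abstract_f} into precisely the expression of the corollary, namely
\[
    \nabla_y f(\bar x,\bar y)^\top \delta_y
    -\min_{y\in S(\bar x)}\max_{\lambda\in\Lambda(\bar x,y)}
    \left((\nabla_x f(\bar x,y)-\nabla_x f(\bar x,\bar y))^\top \delta_x+\lambda^\top\nabla_x g(\bar x,y)\delta_x\right).
\]

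Consequently, the system built from \eqref{eq:first_order_sufficient_condition_system_abstract_F}, \eqref{eq:first_order_sufficient_condition_system_abstract_G}, \eqref{eq:first_order_sufficient_condition_system_abstract_g}, and the corollary's fourth inequality coincides \emph{verbatim} with the system \eqref{eq:first_order_sufficient_condition_system_abstract} of \cref{thm:first_order_sufficient_condition}, so the assumed absence of a nontrivial solution transfers immediately and the theorem delivers strict local optimality of $(\bar x,\bar y)$. I expect no genuine obstacle here. The only point meriting care is that, unlike in \cref{cor:approximate_upper_Dini_derivative_from_below} — where one has only the estimate $\varphi^+(\bar x;\delta_x)\le\inf_{y}\max_{\lambda}(\cdots)$, so the corollary's system merely \emph{contains} the theorem's — convexity upgrades the inequality to an equality and the infimum to an attained minimum. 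It is exactly this attainment, guaranteed by \cite[Theorem~4.16]{Dempe2002}, and the replacement of $\varphi^+$ by the genuine directional derivative, that make the two systems identical rather than merely nested, yielding the slightly sharper conclusion claimed in the statement.
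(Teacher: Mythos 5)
Your proposal is correct and follows exactly the route the paper intends: invoke \cite[Theorem~4.16]{Dempe2002} (whose hypotheses are met under the stated boundedness, LMFCQ, and convexity assumptions) to replace $\varphi^+(\bar x;\cdot)$ in \cref{thm:first_order_sufficient_condition} by the exact directional-derivative formula, then pull the constant term $\nabla_xf(\bar x,\bar y)^\top\delta_x$ through the inner maximum and outer minimum to see that the corollary's system coincides with \eqref{eq:first_order_sufficient_condition_system_abstract}. Nothing is missing.
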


Finally, the situation of a fully convex lower level problem is even more
comfortable since we do not need to consider the minimum over all
lower level solutions associated with the reference point, see
\cite[Corollary~4.7]{Dempe2002}.
\begin{corollary}
	Let $(\bar x,\bar y)\in\R^n\times\R^m$ be a feasible point of \eqref{eq:BPP}
	where the lower level problem is jointly convex w.r.t.\ $(x,y)$.
	Assume that $K$ is locally bounded at $\bar x$ while LMFCQ holds at all
	points $(\bar x,y)\in\gph S$. Finally, assume that the system
	\eqref{eq:first_order_sufficient_condition_system_abstract_F},
	\eqref{eq:first_order_sufficient_condition_system_abstract_G},
	\eqref{eq:first_order_sufficient_condition_system_abstract_g}, and
	\[
		\nabla_yf(\bar x,\bar y)^\top \delta_y
		-\max\limits_{\lambda\in\Lambda(\bar x,\bar y)}
		\lambda^\top\nabla_xg(\bar x,\bar y)\delta_x
		\leq 0
	\]
	does not possess a nontrivial solution $d :=(\delta_x,\delta_y)\in\R^n\times\R^m$.
	Then, $(\bar x,\bar y)$ is a strict local minimizer of \eqref{eq:BPP}.
\end{corollary}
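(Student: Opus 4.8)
The plan is to obtain this corollary as a direct specialization of Corollary~\ref{cor:approximate_upper_Dini_derivative_from_below} (equivalently, of Theorem~\ref{thm:first_order_sufficient_condition}) by simplifying the expression appearing in the fourth inequality under the additional hypothesis of joint convexity of the lower level data. The abstract condition from Theorem~\ref{thm:first_order_sufficient_condition} hinges on an upper estimate of $\varphi^+(\bar x;\delta_x)$, and under local boundedness of $K$ at $\bar x$ together with LMFCQ at all points of $\gph S$ over the fiber $\bar x$, such an estimate is available from \cite[Theorem~4.15]{Dempe2002}. The whole task therefore reduces to showing that in the jointly convex case the governing formula for (a bound on) $\varphi'(\bar x;\delta_x)$ collapses to the single maximization $\max_{\lambda\in\Lambda(\bar x,\bar y)}\lambda^\top\nabla_x g(\bar x,\bar y)\delta_x$ evaluated at the \emph{reference} solution $\bar y$ alone.

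First I would invoke the regularity hypotheses to secure that $\varphi$ is locally Lipschitz near $\bar x$ and, under joint convexity, directionally differentiable there, so that the machinery of Theorem~\ref{thm:first_order_sufficient_condition} applies verbatim after replacing the upper Dini derivative by the genuine directional derivative. The key structural fact is the one cited as \cite[Corollary~4.7]{Dempe2002}: when the lower level is jointly convex in $(x,y)$, the directional derivative of the value function does not require taking an infimum over the whole solution set $S(\bar x)$, but can be represented using any single fixed solution, here the reference point $\bar y$. Concretely, joint convexity forces the term $\nabla_x f(\bar x,y)-\nabla_x f(\bar x,\bar y)$ that appears in the intermediate formula of Corollary~\ref{cor:approximate_upper_Dini_derivative_from_below} to drop out when one specializes to $y=\bar y$, leaving only the multiplier-dependent piece $\lambda^\top\nabla_x g(\bar x,\bar y)\delta_x$ maximized over $\Lambda(\bar x,\bar y)$. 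I would therefore substitute this simplified representation into the fourth line of the system and note that \eqref{eq:first_order_sufficient_condition_system_abstract_f} becomes exactly the displayed inequality.

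With the value-function formula in hand, the remaining argument is purely formal: I would assemble the linearized system consisting of \eqref{eq:first_order_sufficient_condition_system_abstract_F}, \eqref{eq:first_order_sufficient_condition_system_abstract_G}, \eqref{eq:first_order_sufficient_condition_system_abstract_g}, and the simplified inequality, observe that these provide an upper estimate of $\mathcal T_M(\bar x,\bar y)$ via Lemma~\ref{lem:Dini_linearization_cone} applied to the value-function representation of $M$, and conclude through Corollary~\ref{cor:abstract_first_order_sufficient_condition_inequality_constraints} that the nonexistence of a nontrivial solution yields the first-order growth condition and hence strict local optimality. The only delicate point to handle with care is the direction of the inequality inherited from the identity $(f-\varphi)^-((\bar x,\bar y);d)=\nabla f(\bar x,\bar y)^\top d-\varphi^+(\bar x;\delta_x)$ already established in the proof of Theorem~\ref{thm:first_order_sufficient_condition}; one must ensure that replacing $\varphi^+$ by the convex-case formula preserves the correct sense of the estimate so that the linearization cone still \emph{over}approximates the tangent cone.

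The main obstacle I anticipate is not the assembly of the system but verifying that the cited directional-derivative formula for the fully convex lower level indeed evaluates the $\nabla_x g$ term at $\bar y$ rather than at a minimizing $y\in S(\bar x)$, i.e.\ justifying that the outer minimization over $S(\bar x)$ can legitimately be dispensed with. This is precisely the content of \cite[Corollary~4.7]{Dempe2002}, and the honest part of the proof lies in checking that its hypotheses are met here and that its conclusion is compatible with the upper-estimate philosophy underlying Theorem~\ref{thm:first_order_sufficient_condition}; everything downstream is a mechanical inheritance from the previously established corollaries.
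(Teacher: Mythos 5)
Your proposal is correct and takes essentially the same approach as the paper: you specialize \cref{thm:first_order_sufficient_condition} (via \cref{cor:approximate_upper_Dini_derivative_from_below}) using the directional-derivative formula for the jointly convex case from \cite[Corollary~4.7]{Dempe2002}, under which evaluation at the single reference solution $\bar y$ is legitimate, the $\nabla_x f$ terms cancel, and the displayed inequality results. This is precisely the (implicit) argument the paper intends, so nothing further is needed.
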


As soon as the negative value function $-\varphi$ is regular in Clarke's sense,
the assertion of \cref{thm:first_order_sufficient_condition}
can be stated in terms of Clarke's subdifferential of $\varphi$.
\begin{theorem}\label{thm:first_order_sufficient_condition_dual}
	Let $(\bar x,\bar y)\in\R^n\times\R^m$ be a feasible point of \eqref{eq:BPP}.
	Assume that $\varphi$ is locally Lipschitz continuous at $\bar x$
	while $-\varphi$ is Clarke-regular at $\bar x$.
	Furthermore, suppose that the polar cone of the set
	\begin{equation}\label{eq:definition_Q}
		\begin{aligned}
		\mathcal Q:=\{\nabla F(\bar x,\bar y)\}
		&\cup
		\{\nabla G_i(\bar x,\bar y)\,|\,i\in \bar I^G\}\\
		&\cup
		\left\{\nabla f(\bar x,\bar y)-\begin{bmatrix}\xi\\0\end{bmatrix}\,\middle|\,\xi\in\partial^c\varphi(\bar x)\right\}
		\cup
		\{\nabla g_i(\bar x,\bar y)\,|\,i\in \bar I^g\}
	\end{aligned}
	\end{equation}
	reduces to zero. Then, $(\bar x,\bar y)$ is a strict local minimizer of \eqref{eq:BPP}.
\end{theorem}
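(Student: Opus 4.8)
The plan is to reduce the assertion to \cref{cor:abstract_first_order_sufficient_condition_inequality_constraints} applied to the value function reformulation \eqref{eq:LLVF}, exactly as in the proof of \cref{thm:first_order_sufficient_condition}, and then to dualize the resulting primal system by means of Clarke's subdifferential. Since $\varphi$ is locally Lipschitz continuous at $\bar x$, the value-function constraint $(x,y)\mapsto f(x,y)-\varphi(x)$ appearing in \eqref{eq:LLVF} is locally Lipschitz continuous at $(\bar x,\bar y)$, and it is active there because $\bar y\in S(\bar x)$ forces $f(\bar x,\bar y)=\varphi(\bar x)$. Hence \cref{cor:abstract_first_order_sufficient_condition_inequality_constraints} applies, and it suffices to show that the system \eqref{eq:first_order_sufficient_condition_system_abstract} possesses only the trivial solution; recall that, as in the proof of \cref{thm:first_order_sufficient_condition}, its third line reads $\nabla f(\bar x,\bar y)^\top d-\varphi^+(\bar x;\delta_x)\leq 0$ with $d=(\delta_x,\delta_y)$.

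The heart of the argument is the dualization of this third inequality under Clarke-regularity of $-\varphi$. First I would note that $\varphi^+(\bar x;\delta_x)=-(-\varphi)^-(\bar x;\delta_x)$ holds by the definitions of the upper and lower Dini directional derivatives. Because $-\varphi$ is Clarke-regular at $\bar x$, it is directionally differentiable there, so its lower Dini derivative agrees with the directional derivative and, by Clarke-regularity, with the Clarke directional derivative; that is, $(-\varphi)^-(\bar x;\delta_x)=(-\varphi)^\circ(\bar x;\delta_x)$. Combining the support-function formula for the Clarke directional derivative (valid since $-\varphi$ is locally Lipschitz as well) with the scaling rule $\partial^c(-\varphi)(\bar x)=-\partial^c\varphi(\bar x)$, I then obtain
\[
	(-\varphi)^-(\bar x;\delta_x)
	=
	\max_{\xi\in\partial^c\varphi(\bar x)}\bigl(-\xi^\top\delta_x\bigr),
	\qquad\text{so that}\qquad
	\varphi^+(\bar x;\delta_x)=\min_{\xi\in\partial^c\varphi(\bar x)}\xi^\top\delta_x.
\]
Thus the third line of \eqref{eq:first_order_sufficient_condition_system_abstract} is equivalent to requiring $\nabla f(\bar x,\bar y)^\top d\leq\xi^\top\delta_x$ for every $\xi\in\partial^c\varphi(\bar x)$, i.e.\ $\left(\nabla f(\bar x,\bar y)-\begin{bmatrix}\xi\\0\end{bmatrix}\right)^\top d\leq 0$ for all such $\xi$.

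Once this is done, the complete solution set of \eqref{eq:first_order_sufficient_condition_system_abstract} is exactly the set of all $d$ with $q^\top d\leq 0$ for every $q\in\mathcal Q$, i.e.\ the polar cone $\mathcal Q^\circ$: the first, second, and fourth lines supply the generators $\nabla F(\bar x,\bar y)$, $\{\nabla G_i(\bar x,\bar y)\}_{i\in\bar I^G}$, and $\{\nabla g_i(\bar x,\bar y)\}_{i\in\bar I^g}$, while the dualized third line supplies the generators $\nabla f(\bar x,\bar y)-\begin{bmatrix}\xi\\0\end{bmatrix}$ with $\xi\in\partial^c\varphi(\bar x)$. Consequently, \eqref{eq:first_order_sufficient_condition_system_abstract} admits only the trivial solution if and only if $\mathcal Q^\circ=\{0\}$, which is the hypothesis. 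Invoking \cref{cor:abstract_first_order_sufficient_condition_inequality_constraints} and the equivalence of \eqref{eq:BPP} and \eqref{eq:LLVF} then delivers the first-order growth condition at $(\bar x,\bar y)$ and, in particular, that $(\bar x,\bar y)$ is a strict local minimizer of \eqref{eq:BPP}.

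The step requiring the most care is the dualization chain $\varphi^+(\bar x;\delta_x)=-(-\varphi)^-(\bar x;\delta_x)=-(-\varphi)^\circ(\bar x;\delta_x)=\min_{\xi\in\partial^c\varphi(\bar x)}\xi^\top\delta_x$: one must track the sign reversals between the upper and lower Dini derivatives and between $\varphi$ and $-\varphi$, and it is precisely here that Clarke-regularity---rather than mere local Lipschitz continuity---is indispensable, since it is what allows the $\liminf$ defining the lower Dini derivative to be replaced by the Clarke directional derivative and hence by a support function of $\partial^c\varphi(\bar x)$, which in turn identifies the solution set of the primal system with $\mathcal Q^\circ$.
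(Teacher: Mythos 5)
Your proposal is correct and follows essentially the same route as the paper: both dualize the Dini-derivative constraint via the chain $\varphi^+(\bar x;\cdot)=-(-\varphi)^-(\bar x;\cdot)=-(-\varphi)^\circ(\bar x;\cdot)$ (using Clarke regularity of $-\varphi$ and the support-function formula together with $\partial^c(-\varphi)(\bar x)=-\partial^c\varphi(\bar x)$), identify the solution set of the primal system \eqref{eq:first_order_sufficient_condition_system_abstract} with $\mathcal Q^\circ$, and conclude via the first-order sufficient condition for \eqref{eq:LLVF}. The only (cosmetic) difference is that you invoke \cref{cor:abstract_first_order_sufficient_condition_inequality_constraints} directly — which is slightly cleaner, since \cref{thm:first_order_sufficient_condition}, cited by the paper, formally carries the local boundedness and LMFCQ hypotheses that are replaced here by the direct assumption of local Lipschitz continuity of $\varphi$.
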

\begin{proof}
	The properties of Clarke's subdifferential can be used in order to see that
	\begin{align*}
		\mathcal Q=\{\nabla F(\bar x,\bar y)\}
		&\cup
		\{\nabla G_i(\bar x,\bar y)\,|\,i\in \bar I^G\}\\
		&\cup
		\left\{\nabla f(\bar x,\bar y)+\begin{bmatrix}\tilde \xi\\0\end{bmatrix}\,\middle|\,\tilde \xi\in\partial^c(-\varphi)(\bar x)\right\}
		\cup
		\{\nabla g_i(\bar x,\bar y)\,|\,i\in \bar I^g\}
	\end{align*}
	holds true.
	Since $\mathcal Q^\circ=\{0\}$ is valid by assumption, the system
	\eqref{eq:first_order_sufficient_condition_system_abstract_F}, \eqref{eq:first_order_sufficient_condition_system_abstract_G},
	\eqref{eq:first_order_sufficient_condition_system_abstract_g}, and
	\begin{align*}
		\nabla f(\bar x,\bar y)^\top d
		+\max\{\tilde \xi^\top \delta_x\,|\,\tilde \xi\in\partial^c(-\varphi)(\bar x)\}
		\leq 0
	\end{align*}
	does not possess a nontrivial solution $d:=(\delta_x,\delta_y)\in\R^n\times\R^m$.
	Observing that $-\varphi$ is Clarke-regular and, therefore, directionally differentiable,
	at $\bar x$, we particularly have
	\begin{align*}
		\max\{\tilde \xi^\top \delta_x\,|\,\tilde \xi\in\partial^c(-\varphi)(\bar x)\}
		=
		(-\varphi)^\circ(\bar x;\delta_x)
		=
		(-\varphi)'(\bar x;\delta_x)
		=
		-\varphi'(\bar x;\delta_x)
		=		
		-\varphi^+(\bar x;\delta_x).
	\end{align*}
	Thus,  \eqref{eq:first_order_sufficient_condition_system_abstract} does not possess a nontrivial solution.
	Due to \cref{thm:first_order_sufficient_condition}, the assertion follows.
\end{proof}

Clearly, the assumptions on the function $\varphi$ which are postulated in \Cref{thm:first_order_sufficient_condition_dual}
trivially hold whenever $\varphi$ is locally finite and concave around the reference point.
Exemplary, assume for a moment that the lower level problem is given by the simple parametric
linear program
\[
	\min\limits_y\{(Ax+c)^\top y\,|\,By\leq b\}
\]
for matrices $A\in\R^{m\times n}$, $B\in\R^{q\times m}$, $b\in\R^q$, and $c\in\R^m$.
Then, the associated optimal value function is concave and one obtains
\[
	\partial^c\varphi(\bar x)=A^\top S(\bar x)=\{A^\top y\,|\,By\leq b,\,B^\top p+A\bar x+c=0,\,p\geq 0,\,p^\top(By-b)=0\}
\]
by strong duality of linear programming whenever $\bar x\in\intr\dom S$ holds true,
see \cref{sec:linear_lower_level_param_in_obj} as well.
This means that $\mathcal Q$ in \eqref{eq:definition_Q} is easily given in terms
of initial problem data in this case.

We would like to point the reader's attention to the observation that the polar cone
of the set $\mathcal Q$ given in \eqref{eq:definition_Q} reduces to $\{0\}$ whenever
$0\in\intr\conv\mathcal Q$ holds which might be easier to check than calculating
the overall cone $\mathcal Q^\circ$ or solving the system
\eqref{eq:first_order_sufficient_condition_system_abstract}.

\begin{example}\label{ex:first_order_sufficient_condition_continued}
	We revisit \Cref{ex:first_order_sufficient_condition} w.r.t.\ the point $(\bar x,\bar y):=(0,0)$
	in the light of \Cref{thm:first_order_sufficient_condition_dual}.
	This is possible since $\varphi$ is a concave function.
	The set $\mathcal Q$ is given by
	\[
		\mathcal Q=
		\left\{
		\begin{pmatrix}
			3\\2
		\end{pmatrix}
		\right\}
		\cup
		\left\{
		\begin{pmatrix}
			\tilde \xi\\0
		\end{pmatrix}
		\,\middle|\,
		-1\leq\tilde \xi\leq 0
		\right\}
		\cup
		\left\{
		\begin{pmatrix}
		0\\-1
		\end{pmatrix}
		\right\}
	\]
	in this context. Given that $0\in\intr\conv\mathcal Q$ holds, $\mathcal Q^\circ$ reduces to the origin.
\end{example}

\subsection{Stable unique lower level solutions}\label{sec:first_order_stable_lower_level_solutions}

Let $M_u:=\{(x,y)\in\R^n\times\R^m\,|\,G(x,y)\leq 0\}$ denote the upper level feasible set.
Clearly, we have $M=\gph S\cap M_u$ which yields
\begin{equation}\label{eq:decomposed_bilevel_feasible_set}
	\mathcal T_M(x,y)\subseteq\mathcal T_{\gph S}(x,y)\cap\mathcal T_{M_u}(x,y)
\end{equation}
for each $(x,y)\in M$, see \cite[Table~4.1]{AubinFrankowska2009}.
Clearly, $\mathcal T_{M_u}(x,y)$ can be expressed via the associated linearized tangent cone
as long as at least ACQ is valid at $(x,y)$ w.r.t.\ $M_u$. Thus, it remains to approximate
$\mathcal T_{\gph S}(x,y)$ from above.

In this section, we consider the situation where $S$ is locally single-valued, Lipschitz continuous,
and directionally differentiable around a reference point $\bar x$, i.e.\ there is a
neighborhood $U\subseteq\R^n$ of $\bar x$ and a locally Lipschitz continuous as well as
directionally differentiable function $s\colon U\to\R^m$ satisfying $S(x)=\{s(x)\}$ for
all $x\in U$. This can be guaranteed if the lower level problem is convex w.r.t.\ $y$ and
satisfies LMFCQ, LCRCQ, as well as
a strong second-order sufficient optimality condition at $(\bar x,\bar y)\in\gph S$,
and under these assumptions, the directional derivative of $s$ can be computed as the solution
of a quadratic optimization problem which is given in terms of initial problem data, 
see \cite[Theorem~2]{RalphDempe1995} for details.
Anyway, exploiting the directional differentiability of $s$ at $\bar x$, we have
\[
	\mathcal T_{\gph S}(\bar x,\bar y)
	=
	\mathcal T_{\gph s}(\bar x,s(\bar x))
	=
	\{(\delta_x,s'(\bar x;\delta_x))\in\R^n\times\R^m\,|\,\delta_x\in\R^n\}
\]
by standard arguments. This leads to the following sufficient optimality condition
for \eqref{eq:BPP} which recovers \cite[Theorem~4.2]{Dempe1992} and the considerations
in \cite[Section~2]{DempeKalashnikovKalashnykova2006}.
\begin{theorem}\label{thm:locally_single_valued_solution_map}
	Fix a feasible point $(\bar x,\bar y)\in\R^n\times\R^m$ of \eqref{eq:BPP}.
	Assume that there is a neighborhood $U\subseteq\R^n$ of $\bar x$ and a locally Lipschitz
	continuous, directionally differentiable function $s\colon U\to\R^m$ such that
	$S(x)=\{s(x)\}$ is valid for all $x\in U$.
	Furthermore, assume that the system
	\begin{equation}\label{eq:OC_single_valued_solution_map}
		\begin{aligned}
			\nabla_xF(\bar x,\bar y)^\top \delta_x+\nabla _yF(\bar x,\bar y)^\top s'(\bar x;\delta_x)&\,\leq\, 0,\\
			\nabla_xG_i(\bar x,\bar y)^\top \delta_x+\nabla _yG_i(\bar x,\bar y)^\top s'(\bar x;\delta_x)&\,\leq\,0
			\quad\forall i\in \bar I^G
		\end{aligned}
	\end{equation}
	does not possess a nontrivial solution $\delta_x\in\R^n$.
	Then, $(\bar x,\bar y)$ is a strict local minimizer of \eqref{eq:BPP}.
\end{theorem}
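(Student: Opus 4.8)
The plan is to combine the decomposition of the feasible set $M=\gph S\cap M_u$ with the explicit description of the tangent cone to the graph of the single-valued solution map, and then invoke the general first-order sufficient condition from \cref{lem:trivial_first_order_condition}. First I would observe that by \eqref{eq:decomposed_bilevel_feasible_set} we have $\mathcal T_M(\bar x,\bar y)\subseteq\mathcal T_{\gph S}(\bar x,\bar y)\cap\mathcal T_{M_u}(\bar x,\bar y)$. The local single-valuedness assumption $S(x)=\{s(x)\}$ on $U$ gives $\gph S\cap(U\times\R^m)=\gph s$, and the directional differentiability of $s$ at $\bar x$ yields, by the standard argument already recalled in the text preceding the theorem, the equality $\mathcal T_{\gph S}(\bar x,\bar y)=\{(\delta_x,s'(\bar x;\delta_x))\mid\delta_x\in\R^n\}$. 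This parametrizes all admissible tangent directions by the single variable $\delta_x\in\R^n$, with the $y$-component forced to equal $s'(\bar x;\delta_x)$.

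Next I would upper-estimate $\mathcal T_{M_u}(\bar x,\bar y)$ by its linearization cone. Since $M_u$ is described by the smooth inequalities $G(x,y)\le 0$, \cref{lem:Dini_linearization_cone} gives
\[
	\mathcal T_{M_u}(\bar x,\bar y)
	\subseteq
	\{(\delta_x,\delta_y)\in\R^n\times\R^m\,|\,
		\nabla G_i(\bar x,\bar y)^\top(\delta_x,\delta_y)\le 0\;\forall i\in\bar I^G\}.
\]
Intersecting the two estimates and substituting $\delta_y=s'(\bar x;\delta_x)$ shows that every direction $d=(\delta_x,\delta_y)\in\mathcal T_M(\bar x,\bar y)$ satisfies $\delta_y=s'(\bar x;\delta_x)$ together with the inequalities $\nabla_xG_i(\bar x,\bar y)^\top\delta_x+\nabla_yG_i(\bar x,\bar y)^\top s'(\bar x;\delta_x)\le 0$ for all $i\in\bar I^G$, i.e.\ exactly the constraints appearing in the second line of \eqref{eq:OC_single_valued_solution_map}.

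Finally I would feed this into the objective-direction condition. For $d=(\delta_x,s'(\bar x;\delta_x))\in\mathcal T_M(\bar x,\bar y)$ one has
\[
	\nabla F(\bar x,\bar y)^\top d
	=
	\nabla_xF(\bar x,\bar y)^\top\delta_x+\nabla_yF(\bar x,\bar y)^\top s'(\bar x;\delta_x),
\]
which is precisely the left-hand side of the first line of \eqref{eq:OC_single_valued_solution_map}. Thus the hypothesis that the system \eqref{eq:OC_single_valued_solution_map} has no nontrivial solution $\delta_x$ says exactly that no direction $d\in\mathcal T_M(\bar x,\bar y)$ with $\delta_x\ne 0$ can satisfy $\nabla F(\bar x,\bar y)^\top d\le 0$. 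Since $\delta_x=0$ forces $\delta_y=s'(\bar x;0)=0$ by positive homogeneity of the directional derivative, the only tangent direction with $\delta_x=0$ is $d=0$; hence the combined system $\nabla F(\bar x,\bar y)^\top d\le 0,\ d\in\mathcal T_M(\bar x,\bar y)$ admits only the trivial solution $d=0$. Applying \cref{lem:trivial_first_order_condition} then yields the first-order growth condition and strict local minimality of $(\bar x,\bar y)$.

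The main obstacle I anticipate is the clean justification of the identity $\mathcal T_{\gph S}(\bar x,\bar y)=\{(\delta_x,s'(\bar x;\delta_x))\mid\delta_x\in\R^n\}$; this rests on local single-valuedness reducing $\gph S$ to $\gph s$ near $(\bar x,\bar y)$ and on the directional differentiability of $s$, both of which are already hypotheses, so the argument is the standard one indicated in the surrounding text. The only subtlety worth flagging is that this description is an exact equality rather than merely an inclusion, so the resulting sufficient condition does not lose sharpness through the $\gph S$ factor; any slack is confined to the ACQ-type upper estimate of $\mathcal T_{M_u}(\bar x,\bar y)$.
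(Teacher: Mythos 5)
Your proof is correct and follows essentially the same route as the paper's own argument: the decomposition $M=\gph S\cap M_u$ from \eqref{eq:decomposed_bilevel_feasible_set}, the exact description $\mathcal T_{\gph S}(\bar x,\bar y)=\{(\delta_x,s'(\bar x;\delta_x))\,|\,\delta_x\in\R^n\}$, the linearization-cone upper estimate of $\mathcal T_{M_u}(\bar x,\bar y)$ via \cref{lem:Dini_linearization_cone}, and a final appeal to \cref{lem:trivial_first_order_condition}. Your explicit remark that $\delta_x=0$ forces $\delta_y=s'(\bar x;0)=0$, so that nontriviality in $\delta_x$ is equivalent to nontriviality of the full direction $d$, is a detail the paper leaves implicit but is exactly the right justification.
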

\begin{proof}
	Noting that $\mathcal T_M(\bar x,\bar y)$ can be approximated from above by the set
	$\mathcal T_{\gph S}(\bar x,\bar y)\cap\mathcal T_{M_u}(\bar x,\bar y)$ which admits the
	representation	
	\[
		\left\{(\delta_x,s'(\bar x;\delta_x))\in\R^n\times\R^m
		\,\middle|\,
			\nabla_xG_i(\bar x,\bar y)^\top \delta_x+\nabla _yG_i(\bar x,\bar y)^\top s'(\bar x;\delta_x)\,\leq\,0
			\;\forall i\in \bar I^G
		\right\}
	\]
	due to the above considerations, this is an immediate consequence of \cref{lem:trivial_first_order_condition}.
\end{proof}

\subsection{Variational analysis approach}

In this section, we want to exploit the decomposition \eqref{eq:decomposed_bilevel_feasible_set} without
assuming that $S$ is locally single-valued around the reference point.
Therefore, let us impose the following assumption for the remaining
part of this section.
\begin{assumption}\label{ass:variational_analysis_approach}
	For each $x\in\R^n$, $f(x,\cdot)\colon\R^m\to\R$ is convex. Furthermore, the function $g$
	is independent of $x$ and componentwise convex.
	Thus, it is reasonable to set $K:=\{y\in\R^m\,|\,g(y)\leq 0\}$.
\end{assumption}
The above assumption allows us to interpret $S$ as the solution map associated with an equilibrium
problem in the following sense:
\[
	\forall x\in\R^n\colon\quad
	S(x)=\{y\in\R^m\,|\,-\nabla_yf(x,y)\in\widehat{\mathcal N}_{K}(y)\}.
\]
This leads to
\[
	\gph S=
	\left\{
		(x,y)\in\R^n\times\R^m\,\middle|\,
		(y,-\nabla_yf(x,y))\in\gph\widehat{\mathcal N}_K
	\right\}
\]
where $\widehat{\mathcal N}_K\colon\R^m\rightrightarrows\R^m$ represents the (regular) normal
cone mapping associated with the set $K$, which assigns to each $y\in\R^m$
the set $\widehat{\mathcal N}_{K}(y)$. By convexity and closedness of $K$, $\gph\widehat{\mathcal N}_K$
is closed as well. Exploiting the preimage rule
from \cite[Theorem~6.31]{RockafellarWets1998} as well as the fact that metric subregularity of
feasibility maps is sufficient for some generalized ACQ to hold,
see \cite[Proposition~5]{HenrionOutrata2005}, we obtain the following result.
\begin{lemma}\label{lem:upper_approximate_tangent_cone_graph_S}
	For $(\bar x,\bar y)\in\gph S$, we have
	\[
		\mathcal T_{\gph S}(\bar x,\bar y)
		\subseteq
		\left\{
			(\delta_x,\delta_y)\in\R^n\times\R^m\,\middle|\,
			(\delta_y,-\nabla^2_{yx}f(\bar x,\bar y)\delta_x-\nabla^2_{yy}f(\bar x,\bar y)\delta_y)
			\in
			\mathcal T_{\gph\widehat{\mathcal N}_K}(\bar y,-\nabla_yf(\bar x,\bar y))
		\right\}.
	\]
	Equality holds whenever the feasibility mapping
	\begin{equation}\label{eq:feasibility_map_VI_approach}
		\R^n\times\R^m\ni(x,y)\mapsto\gph\widehat{\mathcal N}_K-\{(y,-\nabla_y f(x,y))\}\subseteq\R^m\times\R^m
	\end{equation}
	is metrically subregular at $(\bar x,\bar y,0,0)$.
\end{lemma}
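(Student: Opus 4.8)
The plan is to recognize $\gph S$ as the preimage of the closed set $D:=\gph\widehat{\mathcal N}_K$ under the continuously differentiable mapping $\Phi\colon\R^n\times\R^m\to\R^m\times\R^m$ given by $\Phi(x,y):=(y,-\nabla_yf(x,y))$; indeed, the displayed characterization of $\gph S$ in the text reads precisely $\gph S=\Phi^{-1}(D)$, and $D$ is closed because $K$ is convex and closed. Writing $\bar u:=(\bar y,-\nabla_yf(\bar x,\bar y))\in D$, a direct differentiation shows that the Jacobian $\nabla\Phi(\bar x,\bar y)$ sends a direction $(\delta_x,\delta_y)$ to $(\delta_y,-\nabla^2_{yx}f(\bar x,\bar y)\delta_x-\nabla^2_{yy}f(\bar x,\bar y)\delta_y)$, which is exactly the vector appearing in the asserted description of the right-hand side. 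Thus the whole statement is an instance of the chain rule for tangent cones under a smooth preimage, \cite[Theorem~6.31]{RockafellarWets1998}, and it remains to supply the two inclusions.

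For the inclusion "$\subseteq$", I would argue directly and unconditionally. If $(\delta_x,\delta_y)\in\mathcal T_{\gph S}(\bar x,\bar y)$, I pick $t_k\downarrow 0$ and $d_k\to(\delta_x,\delta_y)$ with $(\bar x,\bar y)+t_kd_k\in\gph S$, i.e.\ $\Phi((\bar x,\bar y)+t_kd_k)\in D$. A first-order Taylor expansion of the $C^1$ map $\Phi$ then shows that $v_k:=\bigl(\Phi((\bar x,\bar y)+t_kd_k)-\bar u\bigr)/t_k$ converges to $\nabla\Phi(\bar x,\bar y)(\delta_x,\delta_y)$ while $\bar u+t_kv_k\in D$ for every $k$; by definition of the tangent cone this places $\nabla\Phi(\bar x,\bar y)(\delta_x,\delta_y)$ in $\mathcal T_D(\bar u)$, which is the claimed inclusion.

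For the reverse inclusion under metric subregularity, I would first translate the hypothesis. The feasibility mapping $\mathcal M$ in \eqref{eq:feasibility_map_VI_approach} satisfies $\mathcal M^{-1}(0)=\gph S$ and $\operatorname{dist}(0,\mathcal M(x,y))=\operatorname{dist}(\Phi(x,y),D)$, so its metric subregularity at $(\bar x,\bar y,0,0)$ is exactly the local error bound $\operatorname{dist}((x,y),\gph S)\le\kappa\,\operatorname{dist}(\Phi(x,y),D)$ for $(x,y)$ near $(\bar x,\bar y)$; this is the point where \cite[Proposition~5]{HenrionOutrata2005} enters, certifying that the bound yields the generalized Abadie-type constraint qualification needed for equality in the preimage rule. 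Now I take $(\delta_x,\delta_y)$ in the right-hand side, so that $\nabla\Phi(\bar x,\bar y)(\delta_x,\delta_y)\in\mathcal T_D(\bar u)$, and choose $t_k\downarrow 0$, $w_k\to\nabla\Phi(\bar x,\bar y)(\delta_x,\delta_y)$ with $\bar u+t_kw_k\in D$. Expanding $\Phi$ along $(\bar x,\bar y)+t_k(\delta_x,\delta_y)$ and comparing with $\bar u+t_kw_k\in D$ gives $\operatorname{dist}(\Phi((\bar x,\bar y)+t_k(\delta_x,\delta_y)),D)=\oo(t_k)$; the error bound then furnishes points of $\gph S$ within $\oo(t_k)$ of $(\bar x,\bar y)+t_k(\delta_x,\delta_y)$, and dividing the difference by $t_k$ exhibits $(\delta_x,\delta_y)$ as a tangent direction to $\gph S$.

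The routine part is the unconditional inclusion; the genuine work is the equality, and the main obstacle is handling the metric subregularity correctly—both in identifying it with the error bound on the residual $\operatorname{dist}(\Phi(\cdot),D)$ and in the estimate $\operatorname{dist}(\Phi((\bar x,\bar y)+t_k(\delta_x,\delta_y)),D)=\oo(t_k)$, where the convergence $w_k\to\nabla\Phi(\bar x,\bar y)(\delta_x,\delta_y)$ must be absorbed into the little-$\oo$ term together with the Taylor remainder of $\Phi$. I expect the cleanest route is to let the cited preimage rule carry out the bookkeeping once the error bound is in hand, rather than re-deriving the equality by bare sequences.
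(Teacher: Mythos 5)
Your proposal is correct and takes essentially the same route as the paper: the paper also views $\gph S$ as the preimage of $\gph\widehat{\mathcal N}_K$ under $(x,y)\mapsto(y,-\nabla_yf(x,y))$ and obtains the lemma by invoking the preimage rule of \cite[Theorem~6.31]{RockafellarWets1998} together with \cite[Proposition~5]{HenrionOutrata2005}, which certifies that metric subregularity of the feasibility map yields the generalized Abadie-type qualification needed for equality. The only difference is that you re-derive both inclusions by explicit sequential arguments (Taylor expansion for the unconditional inclusion, the error-bound reformulation of metric subregularity for the converse), which the paper delegates entirely to the two citations.
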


Recently, some progress has been made regarding the characterization of the tangent cone
to the graph of $\widehat{\mathcal N}_K$, see \cite[Section~4]{GfrererOutrata2016}.
Thus, combining \eqref{eq:decomposed_bilevel_feasible_set} with
\cref{lem:upper_approximate_tangent_cone_graph_S}, \cref{thm:first_order_sufficient_condition}
allows the derivation of a first-order sufficient optimality condition for \eqref{eq:BPP}.
\begin{theorem}\label{thm:first_order_sufficient_condition_var_analysis}
	Let $(\bar x,\bar y)\in\R^n\times\R^m$ be a feasible point of \eqref{eq:BPP}.
	Assume for each $u\in\mathcal L_K(\bar y)\setminus\{0\}$ that
	\begin{equation}\label{eq:SOSCMS}
		0=\nabla g(\bar y)^\top\lambda,\,\lambda\geq 0,\,\lambda^\top g(\bar y)=0,\,
		\sum\limits_{i=1}^q\lambda_iu^\top\nabla^2 g_i(\bar y)u\geq 0
		\quad\Longrightarrow\quad
		\lambda=0.
	\end{equation}
	Finally, suppose that the system \eqref{eq:first_order_sufficient_condition_system_abstract_F},
	\eqref{eq:first_order_sufficient_condition_system_abstract_G}, and
	\begin{subequations}\label{eq:first_order_sufficient_VA}
		\begin{align}
			\label{eq:first_order_sufficient_VA_graphS}
				\nabla^2_{yx}f(\bar x,\bar y)\delta_x+\nabla^2_{yy}f(\bar x,\bar y)\delta_y
				+
				\kappa\nabla_yf(\bar x,\bar y)
				+
				\sum\limits_{i=1}^q\bigl(\lambda_i\nabla^2g_i(\bar y)\delta_y+\mu_i\nabla g_i(\bar y)\bigr)
				&\,=\,0,\\
			\label{eq:first_order_sufficient_VA_lambda}
				\nabla_yf(\bar x,\bar y)+\nabla g(\bar y)^\top\lambda\,=\,0,\quad
				\lambda\geq 0,\quad
				\lambda^\top g(\bar y)&\,=\,0,\\
			\label{eq:first_order_sufficient_VA_mu}
				\mu\geq 0,\quad\mu^\top g(\bar y)\,=\,0,\quad
				\mu^\top\nabla g(\bar y)\delta_y&\,=\,0,\\
			\label{eq:first_order_sufficient_VA_linearization_cone}
				\nabla g_i(\bar y)^\top\delta_y&\,\leq\,0\quad i\in \bar I^g,\\
			\label{eq:first_order_sufficient_VA_critical_cone}
				\nabla f_y(\bar x,\bar y)^\top\delta_y&\,=\,0
		\end{align}
	\end{subequations}
	does not possess a solution $(\delta_x,\delta_y,\lambda,\kappa,\mu)\in\R^n\times\R^m\times\R^q\times\R\times\R^q$ 
	which satisfies $(\delta_x,\delta_y)\neq(0,0)$.
	Then, $(\bar x,\bar y)$ is a strict local minimizer of \eqref{eq:BPP}.
\end{theorem}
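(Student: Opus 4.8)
The plan is to combine the abstract first-order principle of \cref{lem:trivial_first_order_condition} with successively finer upper estimates of the tangent cone $\mathcal T_M(\bar x,\bar y)$. Recalling that $M=\gph S\cap M_u$, I would first invoke the decomposition \eqref{eq:decomposed_bilevel_feasible_set} to write $\mathcal T_M(\bar x,\bar y)\subseteq\mathcal T_{\gph S}(\bar x,\bar y)\cap\mathcal T_{M_u}(\bar x,\bar y)$. The factor $\mathcal T_{M_u}(\bar x,\bar y)$ is handled by \cref{lem:Dini_linearization_cone}, whose upper estimate is valid without any constraint qualification and reproduces precisely the constraints \eqref{eq:first_order_sufficient_condition_system_abstract_G}. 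Hence it suffices to obtain a data-based over-approximation of $\mathcal T_{\gph S}(\bar x,\bar y)$ and to show that, together with $\nabla F(\bar x,\bar y)^\top d\leq 0$, the resulting system admits only the trivial direction.

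For the tangent cone to $\gph S$, I would apply \cref{lem:upper_approximate_tangent_cone_graph_S}, which is available because \cref{ass:variational_analysis_approach} renders $K$ convex and lets us read $S$ off as the solution map of the generalized equation $-\nabla_yf(x,y)\in\widehat{\mathcal N}_K(y)$. Since only an upper estimate is needed for a sufficient condition, I do not require equality in that lemma; the task is thereby reduced to over-approximating $\mathcal T_{\gph\widehat{\mathcal N}_K}\bigl(\bar y,-\nabla_yf(\bar x,\bar y)\bigr)$ in terms of $g$ and its derivatives.

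The core step is to feed in the characterization of the tangent cone to the graph of the normal-cone mapping from \cite[Section~4]{GfrererOutrata2016}. The hypothesis \eqref{eq:SOSCMS}, imposed over all $u\in\mathcal L_K(\bar y)\setminus\{0\}$, is exactly the second-order sufficient condition for metric subregularity of the constraint map defining $K$; it is the regularity input under which that characterization applies. Transcribing the resulting formula, a direction $(\delta_y,w)$ lies in $\mathcal T_{\gph\widehat{\mathcal N}_K}(\bar y,-\nabla_yf(\bar x,\bar y))$ only if $\delta_y$ belongs to the lower level critical cone and $w$ is representable through a multiplier $\lambda\in\Lambda(\bar x,\bar y)$ together with a multiplier-velocity. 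Setting $w:=-\nabla^2_{yx}f(\bar x,\bar y)\delta_x-\nabla^2_{yy}f(\bar x,\bar y)\delta_y$ as dictated by \cref{lem:upper_approximate_tangent_cone_graph_S}, the multiplier identity $\nabla_yf+\nabla g^\top\lambda=0$ gives \eqref{eq:first_order_sufficient_VA_lambda}, the critical-cone membership splits into \eqref{eq:first_order_sufficient_VA_linearization_cone}--\eqref{eq:first_order_sufficient_VA_critical_cone}, and the representation of $w$ yields \eqref{eq:first_order_sufficient_VA_graphS} with the sign-constrained auxiliary variables $\mu$ and $\kappa$ encoding the admissible velocity of the multiplier (free on the strongly active indices, nonnegative and complementary on the biactive ones, as recorded in \eqref{eq:first_order_sufficient_VA_mu}).

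Assembling these pieces, every $d=(\delta_x,\delta_y)\in\mathcal T_M(\bar x,\bar y)$ with $\nabla F(\bar x,\bar y)^\top d\leq 0$ satisfies \eqref{eq:first_order_sufficient_condition_system_abstract_F}, \eqref{eq:first_order_sufficient_condition_system_abstract_G}, and \eqref{eq:first_order_sufficient_VA} for suitable $\lambda,\kappa,\mu$; the assumed nonexistence of a nontrivial solution then forces $d=0$, so \cref{lem:trivial_first_order_condition} delivers the strict local minimizer. The main obstacle I anticipate is this core step: correctly invoking the Gfrerer--Outrata tangent-cone formula, confirming that \eqref{eq:SOSCMS} is the precise metric-subregularity prerequisite it demands, and matching the auxiliary variables $\kappa$ and $\mu$ to the admissible multiplier-velocities — in particular verifying that the combination $\kappa\nabla_yf(\bar x,\bar y)+\sum_i\mu_i\nabla g_i(\bar y)$ reproduces the full cone of feasible normal-vector velocities rather than a strict subset.
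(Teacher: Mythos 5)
Your proposal follows essentially the same route as the paper's proof: the decomposition \eqref{eq:decomposed_bilevel_feasible_set}, \cref{lem:Dini_linearization_cone} for the upper level constraints, \cref{lem:upper_approximate_tangent_cone_graph_S} combined with the Gfrerer--Outrata tangent-cone characterization under \eqref{eq:SOSCMS}, and finally \cref{lem:trivial_first_order_condition}. The one step you flag as a possible obstacle --- that $\kappa\nabla_yf(\bar x,\bar y)+\sum_{i}\mu_i\nabla g_i(\bar y)$ subject to \eqref{eq:first_order_sufficient_VA_mu} captures the \emph{full} set of admissible normal-vector velocities --- is closed in the paper by the elementary polyhedral computation $\widehat{\mathcal N}_{\mathcal C^L(\bar x,\bar y)}(\delta_y)=\mathcal C^L(\bar x,\bar y)^\circ\cap\{\delta_y\}^\perp$ together with Farkas' lemma applied to the representation \eqref{eq:lower_level_critical_cone} of the critical cone, which yields exactly that parametrization.
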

\begin{proof}
	Due to \cite[Proposition~3, Theorem~1]{GfrererOutrata2016},
 	the postulated condition \eqref{eq:SOSCMS} guarantees that we have
 	\[
 		\mathcal T_{\gph\widehat{\mathcal N}_K}(\bar y,-\nabla_yf(\bar x,\bar y))
 		=
 		\left\{
 			(r_1,r_2)\in\R^m\times\R^m\,\middle|\,
 			\exists\lambda\in\Lambda(\bar x,\bar y)\colon\,
 			r_2-\sum_{i=1}^q\lambda_i\nabla^2g_i(\bar y)r_1\in\widehat{\mathcal N}_{\mathcal C^L(\bar x,\bar y)}(r_1)
 		\right\}
 	\]
 	where $\Lambda(\bar x,\bar y)$ denotes the set of lower level Lagrange multipliers and
	$\mathcal C^L(\bar x,\bar y)$ represents the lower level critical cone,
	see \eqref{eq:lower_level_critical_cone}.
 	Noting that $C^L(\bar x,\bar y)$ is a closed, convex, polyhedral cone, we have
 	\begin{align*}
 		\widehat{\mathcal N}_{\mathcal C^L(\bar x,\bar y)}(\delta_y)
 		&=
 		\mathcal C^L(\bar x,\bar y)^\circ\cap\{\delta_y\}^\perp\\
 		&=
 		\left\{
 			\kappa\nabla_yf(\bar x,\bar y)+\sum_{i=1}^q\mu_i\nabla g_i(\bar y)\,\middle|\,
 			\kappa\in\R,\,\mu\geq 0,\,\mu^\top g(\bar y)=0
 		\right\}
 		\cap\{\delta_y\}^\perp\\
 		&=
 		\left\{
 			\kappa\nabla_yf(\bar x,\bar y)+\sum_{i=1}^q\mu_i\nabla g_i(\bar y)\,\middle|\,
 			\kappa\in\R,\,\mu\geq 0,\,\mu^\top g(\bar y)=0,\,\mu^\top\nabla g(\bar y)\delta_y=0
 		\right\}
 	\end{align*}
 	for each $\delta_y\in \mathcal C^L(\bar x,\bar y)$ by definition of the latter cone.
 	
 	Clearly, for each $(\delta_x,\delta_y)\in\mathcal T_M(\bar x,\bar y)$,
 	we have \eqref{eq:first_order_sufficient_condition_system_abstract_G}
 	and $(\delta_x,\delta_y)\in\mathcal T_{\gph S}(\bar x,\bar y)$
 	due to \eqref{eq:decomposed_bilevel_feasible_set} and \cref{lem:Dini_linearization_cone}.
 	Combining \cref{lem:upper_approximate_tangent_cone_graph_S} and the above considerations,
 	$(\delta_x,\delta_y)\in\mathcal T_{\gph S}(\bar x,\bar y)$ implies
 	that there are $\lambda,\mu\in\R^q$ and $\kappa\in\R$
 	satisfying \eqref{eq:first_order_sufficient_VA}.
 	Thus, observing that each solution $(\delta_x,\delta_y,\lambda,\kappa,\mu)$ of
 	\eqref{eq:first_order_sufficient_condition_system_abstract_F}, \eqref{eq:first_order_sufficient_condition_system_abstract_G},
 	and \eqref{eq:first_order_sufficient_VA} satisfies $(\delta_x,\delta_y)=(0,0)$,
 	there does not exist $(\delta_x,\delta_y)\in\mathcal T_M(\bar x,\bar y)$ such that
 	\[
 		\nabla _xF(\bar x,\bar y)^\top \delta_x+\nabla _yF(\bar x,\bar y)^\top \delta_y\leq 0.
 	\]
 	Hence, due to \cref{lem:trivial_first_order_condition}, $(\bar x,\bar y)$ is a strict local
 	minimizer of \eqref{eq:BPP}.
\end{proof}

In the literature, \eqref{eq:SOSCMS} is referred to as \emph{Second-Order Sufficient Condition for
Metric Subregularity}, see, e.g., \cite[Corollary~1]{GfrererKlatte2016}. Clearly, this condition is weaker than LMFCQ.
Recall that the exploited estimate for $\mathcal T_{\gph S}(\bar x,\bar y)$ is sharp when
the set-valued mapping in \eqref{eq:feasibility_map_VI_approach} is metrically
subregular, see \cref{lem:upper_approximate_tangent_cone_graph_S}. In this case, the conditions in
\cref{thm:first_order_sufficient_condition_var_analysis} are indeed of reasonable strength.
Note that whenever $K$ is polyhedral, then $\gph\widehat{\mathcal N}_K$ is the union of finitely many
polyhedral sets. If, additionally, $\nabla_yf$ is affine jointly in $x$ and $y$, then the multifunction
in \eqref{eq:feasibility_map_VI_approach} is a so-called polyhedral set-valued mapping, and this
property ensures that it is metrically subregular everywhere on its graph, see
\cite[Proposition~1]{Robinson1981}. This setting is inherent for lower level problems
of obstacle-type given by
\begin{equation}\label{eq:obstacle_problem}
	\min\limits_y\{\tfrac12y^\top Ay-(Bx+c)^\top y\,|\,\psi_\ell\leq y\leq\psi_u\}
\end{equation}
where $A\in\R^{m\times m}$ is symmetric and positive semidefinite, $B\in\R^{m\times n}$ as well as $c\in\R^m$
are chosen arbitrarily, and the lower and upper obstacle $\psi_\ell,\psi_u\in\R^m$ satisfy
$\psi_\ell<\psi_u$, see \cite[Section~4]{HenrionSurowiec2011} as well.

Exploiting recent theory from \cite{GfrererMordukhovich2019}, the results of
\cref{thm:first_order_sufficient_condition_var_analysis} can be generalized
to the setting where $K$ depends on $x$. In this case, some more restrictive assumptions have to be postulated
and the system \eqref{eq:first_order_sufficient_VA} gets far more complex.

The upcoming example depicts that, in contrast to \cref{thm:locally_single_valued_solution_map},
\cref{thm:first_order_sufficient_condition_var_analysis} is indeed capable
of identifying local minimizers of \eqref{eq:BPP} in situations where $S$ is not single-valued.
\begin{example}\label{ex:variational_analysis_approach_ii}
	Consider the bilevel optimization problem
	\[
		\min\limits_{x,y}\{-x-y\,|\,-x+y\leq 1,\,y\in S(x)\}
	\]
	where $S\colon\R\rightrightarrows\R$ is the solution map of $\min_y\{xy^2\,|\,y\in[-1,1]\}$.
	Here, we have
	\[
		\forall x\in\R\colon\quad
		S(x)=
			\begin{cases}
				\{-1,1\} 	&x<0,\\
				[-1,1]		&x=0,\\
				\{0\}		&x>0.
			\end{cases}
	\]
	We consider $(\bar x,\bar y):=(0,1)$.
	The associated system \eqref{eq:first_order_sufficient_condition_system_abstract_F},
	\eqref{eq:first_order_sufficient_condition_system_abstract_G},
	\eqref{eq:first_order_sufficient_VA} reduces to
	\[
		\begin{aligned}
			-\delta_x-\delta_y\leq 0,	\quad
			-\delta_x+\delta_y\leq 0,	\quad
			2\delta_x+\mu =0,			\quad
			\mu\geq 0,					\quad
			\mu\delta_y=0,				\quad
			\delta_y\leq 0,
		\end{aligned}
	\]
	and one can easily check that all of its solutions satisfy $\delta_x=\delta_y=0$ which
	means that $(\bar x,\bar y)$ is a strict local minimizer of the considered
	bilevel optimization problem, see \cref{thm:first_order_sufficient_condition_var_analysis}.
\end{example}

\subsection{KKT-approach}

In this section, we want to exploit the lower level KKT-conditions for a detailed expression of the set $M$.
This is a reasonable approach as long as the lower level problem is convex w.r.t.\ $y$ and sufficiently
regular. Below, we specify these assumptions which have to hold throughout this section.
\begin{assumption}\label{ass:KKT_approach}
	For each $x\in\R^n$, $f(x,\cdot)\colon\R^m\to\R$ is convex while $g(x,\cdot)\colon\R^m\to\R^q$ is
	componentwise convex. Furthermore, at each point of $\gph K$,
	a lower level constraint qualification holds.
\end{assumption}

Due to the postulated convexity and regularity assumptions, writing $(x,y)\in\gph S$ is equivalent to
$(x,y)\in\gph K$ and $\Lambda(x,y)\neq\varnothing$.
For subsequent use, we introduce the classical index sets
\[
	\begin{aligned}
		\bar I^{0-}=I^{0-}(\bar x,\bar y,\bar\lambda)
		&:=
		\{i\in\{1,\ldots,q\}\,|\,\bar\lambda_i=0\,\land g_i(\bar x,\bar y)<0\},\\
		\bar I^{+0}=I^{+0}(\bar x,\bar y,\bar\lambda)
		&:=
		\{i\in\{1,\ldots,q\}\,|\,\bar\lambda_i>0\,\land g_i(\bar x,\bar y)=0\},\\
		\bar I^{00}=I^{00}(\bar x,\bar y,\bar\lambda)
		&:=
		\{i\in\{1,\ldots,q\}\,|\,\bar\lambda_i=0\,\land g_i(\bar x,\bar y)=0\},
	\end{aligned}
\]
where $(\bar x,\bar y)\in\gph S$ and $\bar\lambda\in\Lambda(\bar x,\bar y)$ are
arbitrarily chosen. Clearly, $\bar I^{+0}\cup \bar I^{00}=\bar I^g$ holds by definition.

Due to the results from \cite{DempeDutta2012}, we want to abstain from using the KKT-reformulation of
the bilevel programming problem \eqref{eq:BPP} since the surrogate \emph{mathematical program with
complementarity constraints} (MPCC for short) given by
\begin{equation}\label{eq:KKT_reformulation}\tag{KKT}
	\min\limits_{x,y,\lambda}\{F(x,y)\,|\,G(x,y)\leq 0,\,\nabla_yL(x,y,\lambda)=0,\,0\leq\lambda\perp g(x,y)\leq 0\},
\end{equation}
where the lower level Lagrange multiplier is treated as an explicit variable,
may possess artificial local minimizers
which are not related to local minimizers of \eqref{eq:BPP}. Additionally, one has to check local optimality
w.r.t.\ \emph{all} associated lower level Lagrange multipliers for \eqref{eq:KKT_reformulation} in order to verify local
optimality of the underlying feasible point of \eqref{eq:BPP}.
For later use, however, let $M_\textup{KKT}\subseteq\R^n\times\R^m\times\R^q$ be the feasible set of
\eqref{eq:KKT_reformulation}. Then, we clearly have the relation $M=\Pi M_\textup{KKT}$, 
where $\Pi\in\R^{(n+m)\times(n+m+q)}$ is the projection matrix 
\[
	\Pi:=
	\begin{bmatrix}
		I_n			&	\mathbb O	&	\mathbb O\\
		\mathbb O	&	I_m			&	\mathbb O
	\end{bmatrix}.
\]
Now, \cref{lem:direct_image} opens a way to compute an upper approximate of the tangent cone to $M$.
\begin{lemma}\label{lem:tangent_cone_via_KKT_approach}
	Let $(\bar x,\bar y)\in\R^n\times\R^m$ be a feasible point of \eqref{eq:BPP} and
	let $\bar\lambda\in\Lambda(\bar x,\bar y)$ be chosen such that
	the condition
	\begin{equation}\label{eq:SMFC}
		\left.
		\begin{aligned}
			&0=\nabla _yg(\bar x,\bar y)^\top\mu,\\
			&\forall i\notin \bar I^g\colon\,\mu_i=0,\\
			&\forall i\in \bar I^{00}\colon\,\mu_i\geq 0
		\end{aligned}
		\right\}
		\,\Longrightarrow\,
		\mu=0
	\end{equation}
	holds true. Then, we have
	\[
		\mathcal T_M(\bar x,\bar y)
		\subseteq
		\Pi\left\{
			(\delta_x,\delta_y,\delta_\lambda)\,\middle|\,
				\begin{aligned}
					\nabla _xG_i(\bar x,\bar y)^\top\delta_x+\nabla_yG_i(\bar x,\bar y)^\top\delta_y&\,\leq\,0&&i\in \bar I^G\\
					\nabla^2_{yx}L(\bar x,\bar y,\bar\lambda)\delta_x+\nabla^2_{yy}L(\bar x,\bar y,\bar\lambda)\delta_y
					+\nabla_yg(\bar x,\bar y)^\top\delta_\lambda&\,=\,0&&\\
					(\delta_\lambda)_i&\,=\,0&&i\in \bar I^{0-}\\
					\nabla_xg_i(\bar x,\bar y)^\top\delta_x+\nabla_yg_i(\bar x,\bar y)^\top\delta_y&\,=\,0&&i\in \bar I^{+0}\\
					0\,\leq\,(\delta_\lambda)_i\perp\nabla_xg_i(\bar x,\bar y)^\top\delta_x+\nabla_yg_i(\bar x,\bar y)^\top\delta_y
					&\,\leq\,0& &i\in \bar I^{00}
				\end{aligned}
		\right\}.
	\]
\end{lemma}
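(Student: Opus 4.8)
The plan is to read the identity $M=\Pi M_\textup{KKT}$ as a linear image and to invoke the direct-image rule of \cref{lem:direct_image} with the matrix $A:=\Pi$, the set $C:=M_\textup{KKT}$, and the reference point $(\bar x,\bar y,\bar\lambda)\in M_\textup{KKT}$. To apply the lemma I first need the kernel condition $\operatorname{ker}\Pi\cap\mathcal T_{M_\textup{KKT}}(\bar x,\bar y,\bar\lambda)=\{0\}$; granted this, the lemma yields $\mathcal T_M(\bar x,\bar y)=\cl\bigl(\Pi\,\mathcal T_{M_\textup{KKT}}(\bar x,\bar y,\bar\lambda)\bigr)$. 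Observe that $\operatorname{ker}\Pi=\{(0,0,\delta_\lambda)\mid\delta_\lambda\in\R^q\}$, so the kernel condition asks exactly that no nontrivial tangent to $M_\textup{KKT}$ be supported purely on the multiplier component.

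Next I would estimate $\mathcal T_{M_\textup{KKT}}(\bar x,\bar y,\bar\lambda)$ from above by the set $\mathcal L$ described by the linearized system on the right-hand side of the statement (before the action of $\Pi$). This is the standard ``tangent cone is contained in the linearization cone'' direction, but the complementarity constraint $0\leq\lambda\perp g(x,y)\leq 0$ must be handled index set by index set. Picking $d=(\delta_x,\delta_y,\delta_\lambda)\in\mathcal T_{M_\textup{KKT}}(\bar x,\bar y,\bar\lambda)$ with witnessing sequences $t_k\downarrow 0$ and $d_k\to d$, the constraints $G_i$ ($i\in\bar I^G$) and $\nabla_yL=0$ linearize in the usual way. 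For $i\in\bar I^{0-}$ the strict inequality $g_i(\bar x,\bar y)<0$ persists along the sequence, forcing $\lambda_i\equiv 0$ and hence $(\delta_\lambda)_i=0$; for $i\in\bar I^{+0}$ the strict positivity $\bar\lambda_i>0$ persists, forcing $g_i=0$ and hence the equality on its directional derivative; for $i\in\bar I^{00}$ one reads off $(\delta_\lambda)_i\geq 0$ and $\nabla_x g_i^\top\delta_x+\nabla_y g_i^\top\delta_y\leq 0$ by passing to the limit. The complementarity relation $\lambda_i^{(k)}g_i^{(k)}=0$ at the perturbed points, after substituting $\lambda_i^{(k)}=t_k(\delta_\lambda)_{k,i}$ and $g_i^{(k)}=t_k\nabla g_i(\bar x,\bar y)^\top(\delta_{x,k},\delta_{y,k})+\oo(t_k)$ and dividing by $t_k^2$, yields in the limit the perpendicularity $(\delta_\lambda)_i\bigl(\nabla_x g_i^\top\delta_x+\nabla_y g_i^\top\delta_y\bigr)=0$. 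I expect this division-by-$t_k^2$ step for the biactive indices to be the most delicate part of the argument. Together these give $\mathcal T_{M_\textup{KKT}}(\bar x,\bar y,\bar\lambda)\subseteq\mathcal L$.

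With this inclusion in hand, the kernel condition follows from \eqref{eq:SMFC}. Setting $\delta_x=0$ and $\delta_y=0$ in the system defining $\mathcal L$ and writing $\mu:=\delta_\lambda$, the $G$-row is trivially satisfied, the stationarity row collapses to $\nabla_yg(\bar x,\bar y)^\top\mu=0$, the rows for $\bar I^{0-}$ (together with the inactive indices $i\notin\bar I^g$, all of which lie in $\bar I^{0-}$) give $\mu_i=0$, the rows for $\bar I^{+0}$ impose nothing on $\mu$, and the biactive rows reduce to $\mu_i\geq 0$ for $i\in\bar I^{00}$. This is precisely the premise of \eqref{eq:SMFC}, whence $\mu=0$. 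Therefore $\operatorname{ker}\Pi\cap\mathcal L=\{0\}$, and since $\mathcal T_{M_\textup{KKT}}(\bar x,\bar y,\bar\lambda)\subseteq\mathcal L$, the required kernel condition $\operatorname{ker}\Pi\cap\mathcal T_{M_\textup{KKT}}(\bar x,\bar y,\bar\lambda)=\{0\}$ holds.

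Finally I would assemble the estimate. By \cref{lem:direct_image} and the inclusion $\mathcal T_{M_\textup{KKT}}(\bar x,\bar y,\bar\lambda)\subseteq\mathcal L$ we get $\mathcal T_M(\bar x,\bar y)=\cl\bigl(\Pi\,\mathcal T_{M_\textup{KKT}}(\bar x,\bar y,\bar\lambda)\bigr)\subseteq\cl(\Pi\mathcal L)$. To discard the closure I would note that $\mathcal L$ is a finite union of polyhedral cones, obtained by branching each biactive index $i\in\bar I^{00}$ into the case $(\delta_\lambda)_i=0$ or the case $\nabla_x g_i^\top\delta_x+\nabla_y g_i^\top\delta_y=0$; the linear image of a polyhedral cone is again polyhedral and hence closed, so $\Pi\mathcal L$ is closed and $\cl(\Pi\mathcal L)=\Pi\mathcal L$. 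This gives $\mathcal T_M(\bar x,\bar y)\subseteq\Pi\mathcal L$, which is the claimed inclusion.
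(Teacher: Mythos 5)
Your proof is correct and follows essentially the same route as the paper's: the direct-image rule of \cref{lem:direct_image} applied to $\Pi$, the kernel condition $\operatorname{ker}\Pi\cap\mathcal T_{M_\textup{KKT}}(\bar x,\bar y,\bar\lambda)=\{0\}$ verified by feeding $\delta_x=\delta_y=0$ into the linearization system and invoking \eqref{eq:SMFC} (using that $\bar I^{0-}=\{1,\ldots,q\}\setminus\bar I^g$ by lower level complementarity), and the closure discarded because the linearization cone is a finite union of polyhedral cones whose image under $\Pi$ is therefore closed. The only deviation is that where the paper cites \cite[Lemma~3.2]{FlegelKanzow2005} for the inclusion of $\mathcal T_{M_\textup{KKT}}(\bar x,\bar y,\bar\lambda)$ in the MPCC-tailored linearization cone, you re-derive it by a direct sequential argument---persistence of strict inequalities for $\bar I^{0-}$ and $\bar I^{+0}$, and division by $t_k^2$ of the biactive complementarity products---which is sound and makes the proof self-contained.
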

\begin{proof}
	Due to \cref{lem:direct_image}, we have
	\[
		\mathcal T_M(\bar x,\bar y)
		=
		\mathcal T_{\Pi M_\textup{KKT}}(\bar x,\bar y)
		=
		\cl\left(\Pi\mathcal T_{M_\textup{KKT}}(\bar x,\bar y,\bar\lambda)\right)
	\]
	provided that the constraint qualification
	\begin{equation}\label{eq:direct_image_CQ}
		\operatorname{ker}\Pi\cap\mathcal T_{M_\textup{KKT}}(\bar x,\bar y,\bar\lambda)=\{0\}
	\end{equation}
	holds true. Obviously, $\operatorname{ker}\Pi=\{0\}\times\{0\}\times\R^q$ is valid.
	It is well known from \cite[Lemma~3.2]{FlegelKanzow2005} that the so-called MPCC-tailored
	linearization cone
	\[
		\mathcal L_{M_\textup{KKT}}(\bar x,\bar y,\bar\lambda)
		:=
		\left\{
			(\delta_x,\delta_y,\delta_\lambda)\,\middle|\,
				\begin{aligned}
					\nabla _xG_i(\bar x,\bar y)^\top\delta_x+\nabla_yG_i(\bar x,\bar y)^\top\delta_y&\,\leq\,0&&i\in \bar I^G\\
					\nabla^2_{yx}L(\bar x,\bar y,\bar\lambda)\delta_x+\nabla^2_{yy}L(\bar x,\bar y,\bar\lambda)\delta_y
					+\nabla_yg(\bar x,\bar y)^\top\delta_\lambda&\,=\,0&&\\
					(\delta_\lambda)_i&\,=\,0&&i\in \bar I^{0-}\\
					\nabla_xg_i(\bar x,\bar y)^\top\delta_x+\nabla_yg_i(\bar x,\bar y)^\top\delta_y&\,=\,0&&i\in \bar I^{+0}\\
					0\,\leq\,(\delta_\lambda)_i\perp\nabla_xg_i(\bar x,\bar y)^\top\delta_x+\nabla_yg_i(\bar x,\bar y)^\top\delta_y
					&\,\leq\,0& &i\in \bar I^{00}
				\end{aligned}
		\right\}
	\]
	always provides an upper estimate of $\mathcal T_{M_{\textup{KKT}}}(\bar x,\bar y,\bar\lambda)$.
	
	Let us note that any vector
	$(\delta_x,\delta_y,\delta_\lambda)\in\operatorname{ker}\Pi\cap\mathcal L_{M_\textup{KKT}}(\bar x,\bar y,\bar\lambda)$,
	by definition,
	satisfies $\delta_x=\delta_y=0$, $\nabla_yg(\bar x,\bar y)^\top\delta_\lambda=0$, $(\delta_\lambda)_i=0$ for all
	$i\in\bar I^{0-}=\{1,\ldots,q\}\setminus\bar I^g$, and $(\delta_\lambda)_i\geq 0$ for all $i\in \bar I^{00}$.
	Now, the validity of \eqref{eq:SMFC} guarantees $\delta_\lambda=0$.
	Particularly, \eqref{eq:direct_image_CQ} holds as well.
	As a consequence, we have
	\[
		\mathcal T_M(\bar x,\bar y)
		=
		\cl\left(\Pi\mathcal T_{M_\textup{KKT}}(\bar x,\bar y,\bar\lambda)\right)
		\subseteq
		\cl\left(\Pi\mathcal L_{M_\textup{KKT}}(\bar x,\bar y,\bar\lambda)\right)
		=
		\Pi\mathcal L_{M_\textup{KKT}}(\bar x,\bar y,\bar\lambda)
	\]
	since $\mathcal L_{M_\textup{KKT}}(\bar x,\bar y,\bar\lambda)$ is the union of finitely many
	polyhedral cones which implies that $\Pi\mathcal L_{M_\textup{KKT}}(\bar x,\bar y,\bar\lambda)$
	possesses the same property and, thus, is closed.
	This shows the desired estimate.
\end{proof}

Let us comment on the above lemma in the subsequent remarks.
\begin{remark}\label{rem:SMFC}
	In the literature, see \cite{Kyparisis1985,Wachsmuth2013}, condition
	\eqref{eq:SMFC}
	is referred to as \emph{Strict Mangasarian--Fromovitz Condition}.
	It is well known that this condition, which cannot be called a constraint qualification
	since it depends explicitly on a Lagrange multiplier $\bar\lambda$ and, thus, implicitly on the lower level
	objective functional, already implies the uniqueness of the multiplier, i.e.\ $\Lambda(\bar x,\bar y)$
	reduces to the singleton $\{\bar\lambda\}$.
	It can be easily checked that validity of \eqref{eq:SMFC} is inherent whenever LLICQ holds
	at the reference point, but \eqref{eq:SMFC} may hold in situations where
	LLICQ is violated, see \cite[Example~2.2]{Kyparisis1985}.
\end{remark}
\begin{remark}\label{rem:sharp_estimate_under_MPCC_ACQ}
	Following the lines of the proof of \cref{lem:tangent_cone_via_KKT_approach}, the provided
	estimate for $\mathcal T_M(\bar x,\bar y)$ is exact whenever the relation
	$\mathcal T_{M_\textup{KKT}}(\bar x,\bar y,\bar\lambda)=\mathcal L_{M_\textup{KKT}}(\bar x,\bar y,\bar\lambda)$
	is valid. In the literature on MPCCs, this condition is refereed to as MPCC-ACQ, see,
	e.g., \cite[Definition~3.1]{FlegelKanzow2005}, and is a comparatively weak MPCC-tailored
	constraint qualification. It holds, e.g., when $G$ is affine and the lower level problem
	is of obstacle-type \eqref{eq:obstacle_problem}, see \cite[Theorem~3.2]{FlegelKanzow2005}.
	In this setting, LLICQ is valid at all lower level feasible points and, thus, \eqref{eq:SMFC}
	is valid as well, i.e.\ \cref{lem:tangent_cone_via_KKT_approach} provides a precise formula
	for the tangent cone to $M$ at $(\bar x,\bar y)$.
\end{remark}

Combining \cref{lem:trivial_first_order_condition,lem:tangent_cone_via_KKT_approach}, we have the following
first-order sufficient optimality condition.
\begin{theorem}\label{thm:first_order_sufficient_condition_KKT}
	Let $(\bar x,\bar y)\in\R^n\times\R^m$ be a feasible point of \eqref{eq:BPP}.
	Furthermore, fix $\bar\lambda\in\Lambda(\bar x,\bar y)$ satisfying \eqref{eq:SMFC}.
	Finally, suppose that the system \eqref{eq:first_order_sufficient_condition_system_abstract_F},
	\eqref{eq:first_order_sufficient_condition_system_abstract_G},
	\begin{subequations}\label{eq:first_order_sufficient_KKT}
		\begin{align}
			\label{eq:first_order_sufficient_KKT_Lagrangian}
				\nabla^2_{yx}L(\bar x,\bar y,\bar\lambda)\delta_x
				+
				\nabla^2_{yy}L(\bar x,\bar y,\bar\lambda)\delta_y
				+
				\nabla _yg(\bar x,\bar y)^\top\delta_\lambda
				&\,=\,0,\\
			\label{eq:first_order_sufficient_KKT_I0-}
				(\delta_\lambda)_i&\,=\,0\quad i\in\bar I^{0-},\\
			\label{eq:first_order_sufficient_KKT_I+0}
				\nabla_xg_i(\bar x,\bar y)^\top\delta_x+\nabla_yg_i(\bar x,\bar y)^\top\delta_y&\,=\,0
				\quad i\in \bar I^{+0},\\
			\label{eq:first_order_sufficient_KKT_I00}
				0\,\leq\,(\delta_\lambda)_i\,\perp\,
				\nabla_xg_i(\bar x,\bar y)^\top\delta_x+\nabla_yg_i(\bar x,\bar y)^\top\delta_y
				&\,\leq\,0\quad i\in\bar I^{00}
		\end{align}
	\end{subequations}
	does not possess a nontrivial solution $(\delta_x,\delta_y,\delta_\lambda)$.
	Then, $(\bar x,\bar y)$ is a strict local minimizer of \eqref{eq:BPP}.
\end{theorem}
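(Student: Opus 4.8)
The plan is to reduce the statement to \cref{lem:trivial_first_order_condition} by showing that, under the stated hypotheses, the system $\nabla F(\bar x,\bar y)^\top d\le 0$, $d\in\mathcal T_M(\bar x,\bar y)$, admits only the trivial solution $d=0$. The two ingredients are already in place: the abstract first-order sufficient condition \cref{lem:trivial_first_order_condition} and the KKT-based tangent-cone estimate \cref{lem:tangent_cone_via_KKT_approach}. They do essentially all the work, so the proof amounts to splicing them together along the projection $\Pi$.

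First I would invoke the fixed multiplier $\bar\lambda\in\Lambda(\bar x,\bar y)$ supplied in the hypothesis, for which \eqref{eq:SMFC} holds; this is precisely the standing requirement of \cref{lem:tangent_cone_via_KKT_approach}, so that lemma applies and yields
\[
	\mathcal T_M(\bar x,\bar y)\subseteq\Pi\,\mathcal L_{M_\textup{KKT}}(\bar x,\bar y,\bar\lambda).
\]
Next I would take an arbitrary $d:=(\delta_x,\delta_y)\in\mathcal T_M(\bar x,\bar y)$ satisfying \eqref{eq:first_order_sufficient_condition_system_abstract_F}. The inclusion above, together with the definition of $\Pi$ as the projection onto the $(x,y)$-coordinates, guarantees a multiplier direction $\delta_\lambda\in\R^q$ with $(\delta_x,\delta_y,\delta_\lambda)\in\mathcal L_{M_\textup{KKT}}(\bar x,\bar y,\bar\lambda)$. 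Unpacking the definition of the MPCC-tailored linearization cone, this is exactly the statement that $(\delta_x,\delta_y,\delta_\lambda)$ satisfies both \eqref{eq:first_order_sufficient_condition_system_abstract_G} and the full system \eqref{eq:first_order_sufficient_KKT}.

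At this point $(\delta_x,\delta_y,\delta_\lambda)$ solves the entire system \eqref{eq:first_order_sufficient_condition_system_abstract_F}, \eqref{eq:first_order_sufficient_condition_system_abstract_G}, \eqref{eq:first_order_sufficient_KKT}, which by assumption possesses no nontrivial solution; hence $(\delta_x,\delta_y,\delta_\lambda)=0$ and, in particular, $d=0$. Thus the only $d\in\mathcal T_M(\bar x,\bar y)$ with $\nabla F(\bar x,\bar y)^\top d\le 0$ is $d=0$, and \cref{lem:trivial_first_order_condition} then delivers the first-order growth condition and strict local optimality of $(\bar x,\bar y)$.

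I do not expect a genuine obstacle here, since the heavy lifting sits in \cref{lem:tangent_cone_via_KKT_approach}. The sole step beyond citing the two lemmas is the passage through $\Pi$: one must observe that membership of $d$ in the \emph{projected} cone furnishes a companion component $\delta_\lambda$ that completes $d$ to a solution of \eqref{eq:first_order_sufficient_KKT}, rather than presupposing such a $\delta_\lambda$. Everything else is a direct composition of the preceding results.
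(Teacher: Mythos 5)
Your proposal is correct and follows exactly the route the paper intends: the theorem is stated there as an immediate consequence of combining \cref{lem:trivial_first_order_condition} with \cref{lem:tangent_cone_via_KKT_approach}, and your splicing through $\Pi$ (lifting a tangent direction $d$ to a triple $(\delta_x,\delta_y,\delta_\lambda)$ in the MPCC-linearization cone, then invoking the no-nontrivial-solution hypothesis) is precisely the detail the paper leaves to the reader. Nothing is missing; the paper's \cref{rem:cosequences_for_KKT} sketches an alternative argument via the KKT-reformulation and \cite[Theorem~3.2]{DempeDutta2012}, but that is not the route you took nor the primary one.
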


In the subsequent remarks, we comment on this theorem and its assumptions.
\begin{remark}\label{rem:multiplier_component}
	Under validity of \eqref{eq:SMFC}, each solution $(\delta_x,\delta_y,\delta_\lambda)$ of the system
	\eqref{eq:first_order_sufficient_condition_system_abstract_F},
	\eqref{eq:first_order_sufficient_condition_system_abstract_G},
	\eqref{eq:first_order_sufficient_KKT} which satisfies
	$(\delta_x,\delta_y)=(0,0)$ already satisfies $\delta_\lambda=0$.
	Thus, one could equivalently state the assertion of \cref{thm:first_order_sufficient_condition_KKT}
	via postulating that the system
	\eqref{eq:first_order_sufficient_condition_system_abstract_F},
	\eqref{eq:first_order_sufficient_condition_system_abstract_G},
	\eqref{eq:first_order_sufficient_KKT}
	does not possess a solution $(\delta_x,\delta_y,\delta_\lambda)$ which satisfies
	$(\delta_x,\delta_y)\neq(0,0)$,
	and the latter seems to be natural in light of $\mathcal T_M(\bar x,\bar y)\subseteq\R^n\times\R^m$
	and \cref{lem:trivial_first_order_condition}.
\end{remark}
\begin{remark}\label{rem:cosequences_for_KKT}
	Exploiting \cite[Lemma~3.2]{FlegelKanzow2005}, one can show that the assumptions of
	\cref{thm:first_order_sufficient_condition_KKT} guarantee that $(\bar x,\bar y,\bar\lambda)$
	is a strict local minimizer of \eqref{eq:KKT_reformulation}.
	Observing that the validity of \eqref{eq:SMFC} implies that $\Lambda(\bar x,\bar y)=\{\bar\lambda\}$
	holds, see \cref{rem:SMFC}, we can use \cite[Theorem~3.2]{DempeDutta2012} in order to
	infer that $(\bar x,\bar y)$ is a strict local minimizer of \eqref{eq:BPP}.
	This provides another proof strategy for \cref{thm:first_order_sufficient_condition_KKT}.
\end{remark}
\begin{remark}\label{rem:non_unique_multipliers}
	Let us note that for each local minimizer $(\tilde x,\tilde y)$ of \eqref{eq:BPP}
	such that $\Lambda(\tilde x,\tilde y)$ is not a singleton, the point $(\tilde x,\tilde y,\tilde \lambda)$
	cannot be a \emph{strict} local minimizer of \eqref{eq:KKT_reformulation} for each
	$\tilde\lambda\in\Lambda(\tilde x,\tilde y)$ since the objective functional of
	\eqref{eq:KKT_reformulation} does not depend on $\lambda$.
	Thus, the KKT-approach is not suited for identifying strict local minimizer of \eqref{eq:BPP}
	with nonunique lower level multiplier since first- (or second-) order sufficient optimality
	conditions imply local linear (or quadratic) growth of the objective functional
	around the point of interest.
	In this regard, the restriction to situations where the lower level Lagrange multiplier is
	uniquely determined seems to be quite natural in the context of this section.
	Noting that the Strict Mangasarian--Fromovitz Condition \eqref{eq:SMFC} is the weakest
	condition which guarantees this, see \cite[Proposition~1.1]{Kyparisis1985},
	the assumptions of \cref{thm:first_order_sufficient_condition_KKT} are quite reasonable.
\end{remark}
\begin{example}\label{ex:first_order_sufficient_conditions_KKT_approach}
	We consider the bilevel programming problem from \cref{ex:variational_analysis_approach_ii}
	at $(\bar x,\bar y):=(0,1)$. The associated lower level Lagrange multiplier is uniquely
	determined and given by $\bar\lambda=(0,0)$. This yields $\bar I^{0-}=\{1\}$ and $\bar I^{00}=\{2\}$
	where we used $g_1(y):=-y-1$ and $g_2(y):=y-1$ for all $y\in\R$.
	The associated system \eqref{eq:first_order_sufficient_condition_system_abstract_F},
	\eqref{eq:first_order_sufficient_condition_system_abstract_G},
	\eqref{eq:first_order_sufficient_KKT} reads as
	\[
		\begin{aligned}
			-\delta_x-\delta_y\leq 0,	\quad
			\delta_x-\delta_y\leq 0,	\quad
			2\delta_x-(\delta_\lambda)_1+(\delta_\lambda)_2=0,	\quad
			(\delta_\lambda)_1=0,	\quad
			0\leq(\delta_\lambda)_2\perp\delta_y\leq 0.
		\end{aligned}
	\]
	and, obviously, does not possess a nontrivial solution.
	Thus, \cref{thm:first_order_sufficient_condition_KKT} correctly identifies the strict local minimizer
 $(\bar x,\bar y)$.
\end{example}

\section{A second-order sufficient optimality condition}\label{sec:second_order_optimality_conditions}

In this section, we are going to derive a second-order sufficient optimality condition for \eqref{eq:BPP}
which is based on \eqref{eq:LLVF}. Therefore, we will make use of the second-order directional derivative
of the optimal value function $\varphi$. First, we are going to state a rather general result which is,
afterwards, specified to different settings where the assumptions are partially inherent or can be easily
verified.

Let $(\bar x,\bar y)\in\R^n\times\R^m$ be a feasible point of \eqref{eq:BPP} and assume that $\varphi$ is
directionally differentiable at $\bar x$.
For subsequent use, we introduce the linearization cone
\begin{equation}\label{eq:Linearization cone}
\mathcal L^\varphi_M(\bar x,\bar y)
	:=
	\left\{
		d:=(\delta_x,\delta_y)\in\R^n\times\R^m\,\middle|\,
			\begin{aligned}
				\nabla G_i(\bar x,\bar y)^\top d&\,\leq\,0&&i\in \bar I^G\\
				\nabla f(\bar x,\bar y)^\top d-\varphi'(\bar x;\delta_x)&\,\leq\,0&&\\
				\nabla g_i(\bar x,\bar y)^\top d&\,\leq\,0&&i\in \bar I^g
			\end{aligned}
	\right\}
\end{equation}
and the critical cone
\[
	\mathcal C^\varphi_M(\bar x,\bar y)
	:=
	\mathcal L^\varphi_M(\bar x,\bar y)\cap\{d\in\R^n\times\R^m\,|\,
	\nabla F(\bar x,\bar y)^\top d\,\leq\,0
	\},
\]
of \eqref{eq:LLVF} at $(\bar x,\bar y)$.
Note that $\mathcal L^\varphi_M(\bar x,\bar y)$ and $\mathcal C^\varphi_M(\bar x,\bar y)$ do not need to be convex
as soon as the function $\varphi$ is nonsmooth at the point $\bar x$.

\begin{remark}\label{rem:no_decrease_in_optimal_value_constraint}
	Let $(\bar x,\bar y)\in\R^n\times\R^m$ be a feasible point of \eqref{eq:BPP}
	and let $\varphi$ be locally Lipschitz continuous and directionally differentiable at $\bar x$.
	Furthermore, assume that at least ACQ holds for the set
	\begin{equation}\label{eq:points_satisfying_lower_level_constraints}
		\{(x,y)\in\R^n\times\R^m\,|\,g(x,y)\leq 0\}
	\end{equation}
	at $(\bar x,\bar y)$.
	Noting that $(\bar x,\bar y)$ is a global minimizer of
	\[
		\min\limits_{x,y}\{f(x,y)-\varphi(x)\,|\,g(x,y)\leq 0\}
	\]	
	by definition of $\varphi$, \cref{lem:abstract_first_order_necessary_optimality_condition}
	and the validity of ACQ yield
	\begin{equation}\label{eq:no_descent_direction_OVC}
		\forall d:=(\delta_x,\delta_y)\in\R^n\times\R^m\colon\quad
		(\forall i\in\bar I^g\colon\;\nabla g_i(\bar x,\bar y)^\top d\leq 0)\,\Longrightarrow\,
		\nabla f(\bar x,\bar y)^\top d-\varphi'(\bar x;\delta_x)\geq 0.
	\end{equation}
	Thus, in this situation, the linearization cone $\mathcal L^\varphi_M(\bar x,\bar y)$ possesses
	the representation
	\[
	\mathcal L^\varphi_M(\bar x,\bar y)
	=
	\left\{
		d:=(\delta_x,\delta_y)\in\R^n\times\R^m\,\middle|\,
			\begin{aligned}
				\nabla G_i(\bar x,\bar y)^\top d&\,\leq\,0&&i\in \bar I^G\\
				\nabla f(\bar x,\bar y)^\top d-\varphi'(\bar x;\delta_x)&\,=\,0&&\\
				\nabla g_i(\bar x,\bar y)^\top d&\,\leq\,0&&i\in \bar I^g
			\end{aligned}
	\right\}.
	\]
\end{remark}

The upcoming example depicts that the statement of \cref{rem:no_decrease_in_optimal_value_constraint}
does not generally hold in the absence of a constraint qualification.
\begin{example}\label{ex:decrease_in_optimal_value_constraint_without_ACQ}
	Let us consider the lower level problem
	\[
		S(x):=\argmin\limits_y\{y_1\,|\,y_1^2+y_2\leq 0,\,y_1^2+(y_2-1)^2\leq 1+x^2\}.
	\]
	One can check that it holds
	\[
		\forall x\in\R\colon\quad
		S(x)
		=
		\left\{
			\left(-\sqrt{\sqrt{2.25+x^2}-1.5},1.5-\sqrt{2.25+x^2}\right)
		\right\}.
	\]
	Thus, the optimal value function $\varphi\colon\R\to\R$ is given by
	\[
		\forall x\in\R\colon\quad
		\varphi(x)
		=
		-\sqrt{\sqrt{2.25+x^2}-1.5}
		=
		-\frac{|x|}{\sqrt{\sqrt{2.25+x^2}+1.5}}.
	\]
	Let us consider $(\bar x,\bar y):=(0,0)\in\gph S$. At $\bar x$, $\varphi$ is
	locally Lipschitz continuous as well as directionally differentiable, and it holds
	\[
		\forall\delta_x\in\R\colon\quad
		\varphi'(\bar x;\delta_x)
		=
		-\tfrac{1}{\sqrt 3}|\delta_x|.
	\]
	The linearization cone to the set given in \eqref{eq:points_satisfying_lower_level_constraints}
	at $(\bar x,\bar y)$ is given by $\{(\delta_x,\delta_{y,1},\delta_{y,2})\in\R^3\,|\,\delta_{y,2}=0\}$.
	Particularly, the direction $(0,-1,0)$ belongs to this cone although it is not a tangent direction,
	i.e.\ ACQ does not hold at $(\bar x,\bar y)$ for the set in \eqref{eq:points_satisfying_lower_level_constraints}.
	Using this direction, one can check that \eqref{eq:no_descent_direction_OVC} is clearly
	violated, and, consequently, the potential representation of the linearization cone from
	\cref{rem:no_decrease_in_optimal_value_constraint}
	is strictly smaller than the original linearization cone from \eqref{eq:Linearization cone}.
\end{example}

Now, we are in position to state the second-order sufficient condition of interest.
Our arguments here are inspired by \cite[Theorem~4.1]{RueckmannShapiro2001}.
\begin{theorem}\label{thm:abstract_second_order_condition}
	Let $(\bar x,\bar y)\in\R^n\times\R^m$ be feasible to \eqref{eq:BPP} such that at least ACQ holds for
	the set in \eqref{eq:points_satisfying_lower_level_constraints} at $(\bar x,\bar y)$.
	Let $\varphi$ be locally Lipschitzian and second-order directionally differentiable at $\bar x$.
	Moreover, let $-\varphi$ be second-order epi-regular at	$\bar x$.
	Finally, assume that we have
	\begin{equation}\label{eq:no_descent_direction}
		\forall d\in\mathcal L^\varphi_M(\bar x,\bar y)\colon\quad
		\nabla F(\bar x,\bar y)^\top d\geq 0,
	\end{equation}
	and that the optimization problem
	\begin{equation}\label{eq:second_order_sufficient_condition_subproblem}
		\min\limits_{w=(\omega_x,\omega_y),\alpha}
		\left\{\alpha\,\middle|\,
			\begin{aligned}
			\nabla F(\bar x,\bar y)^\top w+d^\top\nabla^2F(\bar x,\bar y)d&\,\leq\,\alpha&&\\
			\nabla G_i(\bar x,\bar y)^\top w+d^\top\nabla^2G_i(\bar x,\bar y)d&\,\leq\,\alpha&&i\in \bar I^G(d)\\
			\nabla f(\bar x,\bar y)^\top w+d^\top\nabla^2f(\bar x,\bar y)d-\varphi''(\bar x;\delta_x,\omega_x)&\,\leq\,\alpha&&\\
			\nabla g_i(\bar x,\bar y)^\top w+d^\top\nabla^2g_i(\bar x,\bar y)d&\,\leq\,\alpha&&i\in \bar I^g(d)
		\end{aligned}
		\right\}
	\end{equation}
	possesses a positive optimal objective value for each $d\in\mathcal C^\varphi_M(\bar x,\bar y)\setminus\{0\}$. Here, we use
	\[
		\bar I^G(d) := \{i\in\bar I^G\,|\,\nabla G_i(\bar x,\bar y)^\top d=0\} \qquad\mbox{ and }\qquad
		\bar I^g(d) := \{i\in\bar I^g\,|\,\nabla g_i(\bar x,\bar y)^\top d=0\}.
	\]
	Then, $(\bar x,\bar y)$ is a strict local minimizer of \eqref{eq:BPP} which satisfies the second-order
	growth condition.
\end{theorem}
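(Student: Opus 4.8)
The plan is to carry out the whole argument inside the value function reformulation \eqref{eq:LLVF}, whose feasible set coincides with $M$, and to proceed by contradiction against the second-order growth condition. So I would suppose that this condition fails at $(\bar x,\bar y)$: then there is a sequence $\{(x_k,y_k)\}\subseteq M$ with $(x_k,y_k)\to(\bar x,\bar y)$ and $F(x_k,y_k)<F(\bar x,\bar y)+\tfrac1k t_k^2$, where $t_k:=\norm{(x_k,y_k)-(\bar x,\bar y)}{2}\downarrow 0$. Setting $d_k:=((x_k,y_k)-(\bar x,\bar y))/t_k$, the unit vectors $d_k$ converge, along a subsequence, to some $d=(\delta_x,\delta_y)$ with $\norm{d}{2}=1$.

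The first step is to show that $d$ is a nonzero critical direction. By construction $d\in\mathcal T_M(\bar x,\bar y)$, and since $\varphi$ is directionally differentiable the lower Dini derivative of the constraint $f-\varphi$ in direction $d$ equals $\nabla f(\bar x,\bar y)^\top d-\varphi'(\bar x;\delta_x)$; hence \cref{lem:Dini_linearization_cone} applied to \eqref{eq:LLVF} yields $d\in\mathcal L^\varphi_M(\bar x,\bar y)$. Dividing the growth violation by $t_k$ and using differentiability of $F$ gives $\nabla F(\bar x,\bar y)^\top d\le 0$, while \eqref{eq:no_descent_direction} forces $\nabla F(\bar x,\bar y)^\top d\ge 0$; thus $\nabla F(\bar x,\bar y)^\top d=0$ and $d\in\mathcal C^\varphi_M(\bar x,\bar y)\setminus\{0\}$. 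Since ACQ holds for the set in \eqref{eq:points_satisfying_lower_level_constraints}, \cref{rem:no_decrease_in_optimal_value_constraint} additionally tells me that the value-function constraint is tight along $d$, i.e.\ $\nabla f(\bar x,\bar y)^\top d-\varphi'(\bar x;\delta_x)=0$, so its first-order term will drop out of the expansion below.

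For the second-order step I would write $(x_k,y_k)=(\bar x,\bar y)+t_k d+\tfrac12 t_k^2 w_k$ with $w_k:=2(d_k-d)/t_k$, so that $t_k w_k\to 0$. Expanding each smooth constraint via \eqref{eq:second_order_taylor_expansion} at an active index from $\bar I^G(d)$ or $\bar I^g(d)$ (where the first-order term vanishes), dividing by $\tfrac12 t_k^2$, and using feasibility of $(x_k,y_k)$, I obtain $\nabla G_i(\bar x,\bar y)^\top w_k+d^\top\nabla^2 G_i(\bar x,\bar y)d\le o(1)$ and the analogous $g$-inequalities, while the growth violation gives $\nabla F(\bar x,\bar y)^\top w_k+d^\top\nabla^2 F(\bar x,\bar y)d\le o(1)$. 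For the value-function constraint I would invoke second-order epi-regularity of $-\varphi$ (\cref{def:second_order_epiregularity}) along the path $(w_k)_x$ to bound $-\varphi(x_k)$ from below by $-\varphi(\bar x)-t_k\varphi'(\bar x;\delta_x)-\tfrac12 t_k^2\varphi''(\bar x;\delta_x,(w_k)_x)+\oo(t_k^2)$; combined with $f(\bar x,\bar y)=\varphi(\bar x)$, the vanishing first-order term, and $f(x_k,y_k)\le\varphi(x_k)$, this yields $\nabla f(\bar x,\bar y)^\top w_k+d^\top\nabla^2 f(\bar x,\bar y)d-\varphi''(\bar x;\delta_x,(w_k)_x)\le o(1)$. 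Collecting everything, $w_k$ is an approximately feasible point of \eqref{eq:second_order_sufficient_condition_subproblem} whose objective value is at most $o(1)$.

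The final step is to pass to the limit and contradict the assumed positivity of the optimal value of \eqref{eq:second_order_sufficient_condition_subproblem} for this $d$. If $\{w_k\}$ is bounded, I extract a convergent subsequence $w_k\to w$, and the collected inequalities certify that $(w,\alpha)$ with some $\alpha\le 0$ is feasible for \eqref{eq:second_order_sufficient_condition_subproblem}, contradicting positivity of its optimal value. If $\{w_k\}$ is unbounded, I normalize and let $\bar w$ be a cluster point of $w_k/\norm{w_k}{2}$; dividing the same inequalities by $\norm{w_k}{2}$ exhibits $\bar w$ as a nonzero recession direction of the feasible set of \eqref{eq:second_order_sufficient_condition_subproblem} along which the objective does not increase, so the optimal value is again non-positive, a contradiction. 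I expect the crux to be exactly this limit passage in the term $\varphi''(\bar x;\delta_x,(w_k)_x)$: carrying $-\varphi''(\bar x;\delta_x,(w_k)_x)$ to the limit requires suitable semicontinuity of $w\mapsto\varphi''(\bar x;\delta_x,w)$ (automatic in the max-of-smooth settings treated later, where it is a finite maximum of affine functions, but needing care in the abstract case), and handling this together with the unbounded alternative uniformly is the main obstacle.
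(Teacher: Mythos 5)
Your argument is sound up to and including the construction of the critical direction $d\in\mathcal C^\varphi_M(\bar x,\bar y)\setminus\{0\}$, the parabolic sequence $w_k:=2(d_k-d)/t_k$ with $t_kw_k\to 0$, and the approximate inequalities stating that all four left-hand sides of \eqref{eq:second_order_sufficient_condition_subproblem} evaluated at $w_k$ are bounded above by some $\varepsilon_k\to 0$ (your use of \cref{rem:no_decrease_in_optimal_value_constraint} to kill the first-order term of the value-function constraint, and of second-order epi-regularity of $-\varphi$ to bound $-\varphi(x_k)$ from below, are exactly right). The genuine gap is your concluding step. In the bounded case, extracting $w_k\to w$ and passing to the limit requires upper semicontinuity of $w\mapsto\varphi''(\bar x;\delta_x,w)$; in the unbounded case, your recession argument requires this map to have positively homogeneous or at least sublinear behavior. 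Neither property is assumed in the theorem, and neither follows from local Lipschitz continuity, second-order directional differentiability, and second-order epi-regularity of $-\varphi$. You flag this yourself as ``the main obstacle'', but flagging it does not discharge it: as written, the proof fails precisely in the abstract setting the theorem addresses.

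The missing idea is that no limit passage in $w$ is needed at all. The hypothesis is the positivity of an \emph{infimum}: the optimal value of \eqref{eq:second_order_sufficient_condition_subproblem} is an infimum over all pairs $(w,\alpha)$. Your inequalities say that $(w_k,\varepsilon_k)$ is itself a feasible point of the subproblem associated with $d$, so its optimal value is at most $\varepsilon_k$ for every $k$, hence at most $0$ --- a contradiction to the assumed positivity, irrespective of whether $\{w_k\}$ converges or is even bounded. This is exactly how the paper exploits the hypothesis: it collapses all constraints into the single max-function $\psi:=\max\{F-F(\bar x,\bar y),G_1,\ldots,G_p,f-\varphi,g_1,\ldots,g_q\}$, observes via \cref{lem:max_rule} that for critical $d$ the optimal value of \eqref{eq:second_order_sufficient_condition_subproblem} equals $\inf_w\psi''((\bar x,\bar y);d,w)$, verifies second-order epi-regularity of $\psi$ via \cref{lem:second_order_epiregularity_max}, and then invokes \cref{prop:second_order_sufficient_condition_unconstrained} --- whose proof is essentially your argument finished with the correct ending --- followed by an elementary step transferring the second-order growth of $\psi$ to \eqref{eq:LLVF}. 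With this one-line repair, your direct (inlined) approach becomes complete and is, modulo packaging, the same proof as the paper's.
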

\begin{proof}
	For the proof, we exploit the functional $\psi\colon\R^n\times\R^m\to\overline\R$ defined, for all $(x,y)\in\R^n\times\R^m$, by
	\[
	\psi(x,y):=\max\{F(x,y)-F(\bar x,\bar y),G_1(x,y),\ldots,G_p(x,y),f(x,y)-\varphi(x),g_1(x,y),\ldots,g_q(x,y)\}.
	\]
	
	As a first step of the proof, we show that whenever the second-order growth condition holds for
	$\psi$ at $(\bar x,\bar y)$, then it also holds for the optimization problem \eqref{eq:LLVF} at $(\bar x,\bar y)$.
	Thus, we assume that there are constants $\varepsilon>0$ and $c>0$ such that
	\begin{equation}\label{eq:second_order_growth_localization}
		\forall (x,y)\in\mathbb B_\varepsilon(\bar x,\bar y)\colon\quad
		\psi(x,y)\geq c\left(\norm{x-\bar x}{2}^2+\norm{y-\bar y}{2}^2\right)
	\end{equation}
	holds. Now, suppose that the second-order growth condition does not hold for \eqref{eq:LLVF} at $(\bar x,\bar y)$.
	Then, we find a sequence $\{(x_k,y_k)\}_{k\in\N}\subseteq M$ converging to $(\bar x,\bar y)$ which satisfies
	\[
		\forall k\in\N\colon\quad
		F(x_k,y_k)<F(\bar x,\bar y)+\tfrac1k\left(\norm{x_k-\bar x}{2}^2+\norm{y_k-\bar y}{2}^2\right).
	\]
	By definition of $\psi$, this yields
	\[
		\forall k\in\N\colon\quad
		\psi(x_k,y_k)
			\leq\max\{F(x_k,y_k)-F(\bar x,\bar y),0\}
			<\tfrac1k\left(\norm{x_k-\bar x}{2}^2+\norm{y_k-\bar y}{2}^2\right);
	\]
	clearly contradicting \eqref{eq:second_order_growth_localization}.
	Thus, the second-order growth condition needs to hold for \eqref{eq:LLVF} at $(\bar x,\bar y)$.
		
	In the remaining part of the proof, we verify that $\psi$ indeed satisfies the second-order
	growth condition at $(\bar x,\bar y)$ which, due to the above arguments, would yield the claim.
	This can be done with the aid of \cref{prop:second_order_sufficient_condition_unconstrained}.
	Due to the assumptions of the theorem, we note that $\psi$ is locally Lipschitz continuous and second-order
	directionally differentiable at $(\bar x,\bar y)$, see \cref{lem:max_rule}.
	Since $-\varphi$ is assumed to be second-order epi-regular at $\bar x$, we can exploit
	\cref{lem:second_order_epiregularity_max} in order to see that $\psi$ is second-order
	epi-regular at $(\bar x,\bar y)$.
	Noting that we have
	\[
		\begin{aligned}
		\psi'((\bar x,\bar y);d)
		=
		\max\Bigl\{
			\nabla F(\bar x,\bar y)^\top d,
			&\max\left\{\nabla G_i(\bar x,\bar y)^\top d\,|\,i\in\bar I^G\right\},\\
			&\nabla f(\bar x,\bar y)^\top d-\varphi'(\bar x;\delta_x),
			\max\left\{\nabla g_i(\bar x,\bar y)^\top d\,|\,i\in\bar I^g\right\}
		\Bigr\}		
		\end{aligned}			
	\]
	for all $d:=(\delta_x,\delta_y)\in\R^n\times\R^m$ from \cref{lem:max_rule} and the equality
	$f(\bar x,\bar y)-\varphi(\bar x)=0$ which holds by definition of $\varphi$,
	condition \eqref{eq:no_descent_direction} implies
	$\psi'((\bar x,\bar y);d)\geq 0$ for all $d\in\R^n\times\R^m$.
	Next, we observe that due to \eqref{eq:no_descent_direction}, it holds
	\[
		\mathcal C^\varphi_M(\bar x,\bar y)=\{d\in\R^n\times\R^m\,|\,\psi'((\bar x,\bar y);d)=0\}.
	\]
	Pick $d:=(\delta_x,\delta_y)\in\mathcal C^\varphi_M(\bar x,\bar y)$ arbitrarily.
	Due to \eqref{eq:no_descent_direction}, it holds $\nabla F(\bar x,\bar y)^\top d=0$.
	Furthermore, \cref{rem:no_decrease_in_optimal_value_constraint} guarantees
	$\nabla f(\bar x,\bar y)^\top d-\varphi'(\bar x;\delta_x)=0$.	
	Exploiting \cref{lem:max_rule} again, we obtain
	\[
		\begin{aligned}
		\psi''((\bar x,\bar y);d,w)
		=
		\max\Bigl\{
			&\nabla F(\bar x,\bar y)^\top w+d^\top\nabla^2F(\bar x,\bar y)d,\\
			&\max
				\left\{
					\nabla G_i(\bar x,\bar y)^\top w+d^\top\nabla^2G_i(\bar x,\bar y)d
					\,\middle|\,
					i\in\bar I^G(d)
				\right\},\\
			&\nabla f(\bar x,\bar y)^\top w+d^\top\nabla^2f(\bar x,\bar y)d-\varphi''(\bar x;\delta_x,\omega_x),\\
			&\max
				\left\{
					\nabla g_i(\bar x,\bar y)^\top w+d^\top\nabla^2g_i(\bar x,\bar y)d
					\,\middle|\,
					i\in \bar I^g(d)
				\right\}
			\Bigr\}
		\end{aligned}
	\]
	for all $w=(\omega_x,\omega_y)\in\R^n\times\R^m$.
	As a consequence, the assumptions of the proposition guarantee that
	\[
		\forall d\in\mathcal C^\varphi_M(\bar x,\bar y)\setminus\{0\}\colon\quad
		\inf_{w\in\R^n\times\R^m}\psi''((\bar x,\bar y);d,w)>0
	\]
	holds true.	
	Due to the above observations, \cref{prop:second_order_sufficient_condition_unconstrained} yields that
	the second-order growth condition holds for $\psi$ at $(\bar x,\bar y)$, and due to the first part of
	the proof, the claim of the theorem holds.
\end{proof}

Let us interrelate the second-order sufficient condition of \cref{thm:abstract_second_order_condition} with
other second-order sufficient optimality conditions from the literature of nonlinear optimization. Therefore,
we suppose for a moment that $\varphi$ is twice continuously differentiable (in particular, $-\varphi$ is
second-order epi-regular in this setting).
This is guaranteed under validity of LLICQ, LSOSC, and LSC at the reference point
from $\gph S$ provided that the lower level problem is convex for each $x\in\R^n$, and in this particular
situation, the associated Hessian of $\varphi$ can be computed in terms of
initial problem data, see \cite[Theorem~3.4.1]{Fiacco1983} for details.
Exploiting the Farkas lemma, one can easily check that \eqref{eq:no_descent_direction} equals the 
KKT-conditions of \eqref{eq:LLVF} since $\varphi$ is continuously differentiable at the reference parameter.
Furthermore, for fixed $d\in\mathcal C^\varphi_M(\bar x,\bar y)$,
\eqref{eq:second_order_sufficient_condition_subproblem} is a linear program whose dual is given by
\[
	\max\left\{
		d^\top\nabla^2 L^\textup{vf}(\bar x,\bar y,\sigma,\nu)d
		\,\middle|\,
		(\sigma,\nu)\in\Lambda^\textup{vf}_0(\bar x,\bar y,d)
	\right\}
\]
where $L^\textup{vf}\colon\R^n\times\R^m\times\R\times\R^{p+1+q}\to\R$ is the \emph{generalized} Lagrangian
of \eqref{eq:LLVF} given by
\[
	L^\textup{vf}(x,y,\sigma,\nu)
	:=
	\sigma\,F(x,y)+G(x,y)^\top\nu^G+\nu^\textup{vf}(f(x,y)-\varphi(x))+g(x,y)^\top\nu^g
\]
and, for each $d\in\mathcal C^\varphi_M(\bar x,\bar y)$,
$\Lambda^\textup{vf}_0(\bar x,\bar y,d)$ is the set of Fritz--John multipliers defined below:
\[
	\Lambda^\textup{vf}_0(\bar x,\bar y,d)
	:=
	\left\{
		(\sigma,\nu)\in\R\times\R^{p+1+q}
		\,\middle|\,
		\begin{aligned}
			&\nabla L^\textup{vf}(\bar x,\bar y,\sigma,\nu)=0,\\
			&\sigma\geq 0,\,\nu\geq 0,\\
			&(\sigma,\nu)\neq 0,\\
			&\forall i\notin \bar I^G(d)\colon\,\nu^G_i=0,\\
			&\forall i\notin \bar I^g(d)\colon\,\nu^g_i=0
		\end{aligned}
	\right\}.
\]
We emphasize that $\nabla L^\textup{vf}$ and $\nabla^2L^\textup{vf}$ denote the respective derivatives 
of the Lagrangian function $L^\textup{vf}$ w.r.t.\ the decision variables $x$ and $y$.
Above and subsequently, we identify $\nu\in\R^{p+1+q}$ with the triplet $(\nu^G,\nu^\textup{vf},\nu^g)\in\R^p\times\R\times\R^q$
where $\nu^G$ is the vector of the first $p$ components of $\nu$, $\nu^\textup{vf}$ is the $(p+1)$-th component of $\nu$ and
$\nu^g$ is the vector of the last $q$ components of $\nu$.
Using the above notation, the set
\[
	\Lambda^\textup{vf}(\bar x,\bar y)
	:=
	\left\{\nu\in\R^{p+1+q}\,
	\,\middle|\,
	(1,\nu)\in\Lambda^\textup{vf}_0(\bar x,\bar y,0)
	\right\}
\]
comprises the Lagrange multipliers for \eqref{eq:LLVF} at $(\bar x,\bar y)$ and is, due to \eqref{eq:no_descent_direction},
nonempty. For fixed multiplier $\bar\nu\in\Lambda^\textup{vf}(\bar x,\bar y)$, we
easily obtain the following representation of the critical cone from the KKT-system of \eqref{eq:LLVF}
and \cref{rem:no_decrease_in_optimal_value_constraint}:
\[
	\mathcal C^\varphi_M(\bar x,\bar y)
	=
	\left\{
		d:=(\delta_x,\delta_y)\in\R^n\times\R^m\,\middle|\,
			\begin{aligned}
				\nabla G_i(\bar x,\bar y)^\top d&\,\leq\,0&&i\in \bar I^G,\,\bar\nu^G_i=0\\
				\nabla G_i(\bar x,\bar y)^\top d&\,=\,0&&i\in \bar I^G,\,\bar\nu^G_i>0\\
				\nabla f(\bar x,\bar y)^\top d-\nabla\varphi(\bar x)^\top\delta_x&\,=\,0&&\\
				\nabla g_i(\bar x,\bar y)^\top d&\,\leq\,0&&i\in \bar I^g,\,\bar\nu^g_i=0\\
				\nabla g_i(\bar x,\bar y)^\top d&\,=\,0&&i\in \bar I^g,\,\bar\nu^g_i>0
			\end{aligned}
	\right\}.
\]
Thus, for each $d\in\mathcal C^\varphi_M(\bar x,\bar y)$, it naturally holds
$(1,\bar\nu)\in\Lambda^\textup{vf}_0(\bar x,\bar y,d)$, i.e.\ the latter
set is nonempty.
Coming back to \cref{thm:abstract_second_order_condition},
saying that \eqref{eq:second_order_sufficient_condition_subproblem} possesses positive objective value
for each $d\in\mathcal C^\varphi_M(\bar x,\bar y)\setminus\{0\}$ is, by strong duality of linear programming,
equivalent to postulating
\begin{equation}\label{eq:LLVF_WSOSC}
	\forall d\in\mathcal C^\varphi_M(\bar x,\bar y)\setminus\{0\}\,
	\exists (\sigma,\nu)\in\Lambda^\textup{vf}_0(\bar x,\bar y,d)\colon\quad
	d^\top\nabla^2 L^\textup{vf}(\bar x,\bar y,\sigma,\nu)d>0.
\end{equation}
Due to the above comments, this condition is implied by validity of
\begin{equation}\label{eq:LLVF_SOSC}
	\forall d\in\mathcal C^\varphi_M(\bar x,\bar y)\setminus\{0\}\,
	\exists \nu\in\Lambda^\textup{vf}(\bar x,\bar y)\colon\quad
	d^\top\nabla^2 L^\textup{vf}(\bar x,\bar y,1,\nu)d>0,
\end{equation}
and this corresponds to the classical Second-Order Sufficient Condition (SOSC) from nonlinear programming
applied to \eqref{eq:LLVF}. Clearly, \eqref{eq:LLVF_WSOSC} is weaker than \eqref{eq:LLVF_SOSC}.

We observe from above that the requirements from \cref{thm:abstract_second_order_condition}
provide a weak Fritz--John-type second-order sufficient condition in primal form which can
be dualized whenever $\varphi$ is sufficiently smooth.
\begin{example}\label{ex:second_order_sufficient_condition_varphi_locally_smooth}
	We consider the bilevel programming problem from \cref{ex:first_order_sufficient_condition}
	at $(\bar x,\bar y):=(-3,1)$. Locally around $\bar x$, $\varphi$ is affine and, thus, smooth.
	One can easily check that
	\[
		\Lambda^\textup{vf}(\bar x,\bar y)
		=
		\{
			(\nu^\textup{vf},0,\nu^g_2)\in\R^3_+\,|\,2-3\nu^\textup{vf}+\nu^g_2=0
		\}
	\]
	holds true, and this set is nonempty, i.e.\ the KKT-conditions hold for the
	associated model \eqref{eq:LLVF} at $(\bar x,\bar y)$.
	Let us fix $\bar\nu^\textup{vf}:=1$ and $\bar\nu^g_2:=1$.
	Then, it holds $(\bar\nu^\textup{vf},0,\bar\nu^g_2)\in\Lambda^\textup{vf}(\bar x,\bar y)$
	and
	\[
		\nabla^2 L^\textup{vf}(\bar x,\bar y,1,\bar\nu^\textup{vf},0,\bar\nu^g_2)
		=
		\begin{pmatrix}
			1&1\\1&1
		\end{pmatrix}.
	\]
	Furthermore, one can easily check that $\mathcal C^\varphi_M(\bar x,\bar y)=\R\times\{0\}$ is
	valid. Consequently, condition \eqref{eq:LLVF_SOSC} is satisfied, and, hence, $(\bar x,\bar y)$ is a
	strict local minimizer of the underlying bilevel optimization problem.
\end{example}

Despite the above observations, \cref{thm:abstract_second_order_condition} is of limited use in practice due
to the following observations. First, it demands several assumptions on the implicitly given function $\varphi$.
Second, the appearing linearization and critical cone depend on the directional derivative of $\varphi$. Third,
the program \eqref{eq:second_order_sufficient_condition_subproblem} involves evaluations of the second-order
directional derivative of the function $\varphi$. In order to obtain applicable conditions, we need to focus on
specific instances of \eqref{eq:BPP} where most of these issues can be discussed in terms of initial problem data.
This will be done in the upcoming subsections for three special classes of bilevel optimization problems.

Note that \eqref{eq:no_descent_direction} is a comparatively strong condition. Indeed, if
$\varphi$ is smooth at $\bar x$, then this requirement is equivalent to postulating that $(\bar x,\bar y)$
is a KKT-point of \eqref{eq:LLVF}, see \cref{prop:B_St_vs_KKT_of_LLVF} below. The class of bilevel
programming problems whose local minimizers satisfy (abstract) KKT-conditions of \eqref{eq:LLVF} is
closely related to those programs enjoying the property to be partially calm at local minimizers.
\begin{remark}\label{rem:partial_exact_penalization}
	Let all the assumptions of \cref{thm:abstract_second_order_condition} be satisfied
	at some feasible point $(\bar x,\bar y)\in\R^n\times\R^m$ of \eqref{eq:BPP}.
	Thus, $(\bar x,\bar y)$ is, in some sense, a stationary point of
	\eqref{eq:LLVF} where a (primal) Fritz--John-type second-order sufficient optimality condition
	holds, cf.\ \eqref{eq:LLVF_WSOSC} for the case where $\varphi$ is twice continuously
	differentiable.
	In light of standard results regarding penalty methods, one might ask now whether
	$(\bar x,\bar y)$ is a local minimizer of the partially penalized problem
	\eqref{eq:partially_penalized_LLVF} for some $\kappa>0$ or, equivalently, whether
	\eqref{eq:LLVF} is necessarily partially calm at $(\bar x,\bar y)$.
	Following e.g.\ \cite{GeigerKanzow2002} or \cite{HanMangasarian1979},
	this requires either some strong second-order sufficient condition to hold at $(\bar x,\bar y)$
	or an MFCQ-type constraint qualification to be valid for \eqref{eq:LLVF}
	at $(\bar x,\bar y)$. Both these requirements are, however, not reasonable
	in the setting at hand. We already observed that \cref{thm:abstract_second_order_condition}
	only provides a weak second-order condition while it is folklore that
	\eqref{eq:LLVF} is inherently irregular.
	Consequently, \cref{thm:abstract_second_order_condition} might be used to identify
	local minimizers of \eqref{eq:BPP} where the latter is not necessarily partially calm.
\end{remark}

Below, we want to clarify the relationship between condition \eqref{eq:no_descent_direction}
and the property of $(\bar x,\bar y)$ to be a KKT-point of \eqref{eq:LLVF}.
Therefore, let us state the (nonsmooth) KKT-conditions of \eqref{eq:LLVF} at a reference point $(\bar x,\bar y)\in M$
such that $\varphi$ is locally Lipschitzian at $\bar x$.
We say that $(\bar x,\bar y)$ is a KKT-point of \eqref{eq:LLVF} if there are multipliers
$\nu^G\in\R^p$, $\nu^\textup{vf}\geq 0$, and $\nu^g\in\R^q$ satisfying
	\begin{subequations}\label{eq:KKT_LLVF_abstract}
		\begin{align}
			\label{eq:KKT_LLVF_abstract_x}
				&\nabla_x F(\bar x,\bar y)+\nabla_x G(\bar x,\bar y)^\top\nu^G+\nu^\textup{vf}\nabla_xf(\bar x,\bar y)
					+\nabla_xg(\bar x,\bar y)^\top\nu^g\in\nu^\textup{vf}\partial^c\varphi(\bar x),\\
			\label{eq:KKT_LLVF_abstract_y}	
				&0=\nabla_yF(\bar x,\bar y)+\nabla_yG(\bar x,\bar y)+\nu^\textup{vf}\nabla_yf(\bar x,\bar y)+\nabla_yg(\bar x,\bar y)^\top\nu^g,\\
			\label{eq:KKT_LLVF_abstract_rho}
				&\nu^G\geq 0,\quad\forall i\notin\bar I^G\colon\,\nu^G_i=0,\\
			\label{eq:KKT_LLVF_abstract_mu}
				&\nu^g\geq 0,\quad\forall i\notin\bar I^g\colon\,\nu^g_i=0.
		\end{align}
	\end{subequations}
Let us mention that it is not necessary to state a complementarity slackness condition w.r.t.\ the
optimal value constraint $f(x,y)-\varphi(x)\leq 0$ and its multiplier $\nu^\textup{vf}$ since the constraint
is active at each feasible point of \eqref{eq:BPP}.
Observe that the above definition is reasonable since in case where $\varphi$ is continuously differentiable
at $\bar x$, $\partial^c\varphi(\bar x)$ reduces to the singleton $\{\nabla\varphi(\bar x)\}$ and
\eqref{eq:KKT_LLVF_abstract} coincides with the standard KKT-system of \eqref{eq:LLVF}.
\begin{proposition}\label{prop:B_St_vs_KKT_of_LLVF}
	Let $(\bar x,\bar y)\in M$ be chosen such that $\varphi$ is directionally differentiable and
	locally Lipschitz continuous at $\bar x$.
	Then, the following assertions hold.
	\begin{enumerate}
		\item If \eqref{eq:no_descent_direction} holds and if $\varphi$ is Clarke regular at $\bar x$,
			then $(\bar x,\bar y)$ is a KKT-point of \eqref{eq:LLVF}.
		\item If $(\bar x,\bar y)$ is a KKT-point of \eqref{eq:LLVF} and if $-\varphi$ is Clarke-regular
			at $\bar x$, then \eqref{eq:no_descent_direction} holds.
	\end{enumerate}
\end{proposition}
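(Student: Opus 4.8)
The plan is to handle both implications by rewriting the optimal-value constraint $\nabla f(\bar x,\bar y)^\top d-\varphi'(\bar x;\delta_x)\leq 0$ occurring in $\mathcal L^\varphi_M(\bar x,\bar y)$ through the Clarke subdifferential $\partial^c\varphi(\bar x)$, which is nonempty, convex, and compact since $\varphi$ is locally Lipschitz at $\bar x$. The whole difficulty is that this constraint is nonconvex in $d$, and the two one-sided regularity assumptions are precisely what is needed to linearize it (for part~(i)) or to sign it (for part~(ii)). Throughout I identify $\xi\in\R^n$ with $\hat\xi:=(\xi,0)\in\R^n\times\R^m$, so that $\xi^\top\delta_x=\hat\xi^\top d$, and I recall that $\varphi^\circ(\bar x;\delta_x)=\max\{\xi^\top\delta_x\,|\,\xi\in\partial^c\varphi(\bar x)\}$.

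For part~(i), Clarke regularity of $\varphi$ gives $\varphi'(\bar x;\delta_x)=\varphi^\circ(\bar x;\delta_x)=\max\{\xi^\top\delta_x\,|\,\xi\in\partial^c\varphi(\bar x)\}$. Fix any $\xi_0\in\partial^c\varphi(\bar x)$. Then the polyhedral cone
\[
	C_{\xi_0}:=\{d\in\R^n\times\R^m\,|\,\nabla G_i(\bar x,\bar y)^\top d\leq 0\;(i\in\bar I^G),\,(\nabla f(\bar x,\bar y)-\hat\xi_0)^\top d\leq 0,\,\nabla g_i(\bar x,\bar y)^\top d\leq 0\;(i\in\bar I^g)\}
\]
satisfies $C_{\xi_0}\subseteq\mathcal L^\varphi_M(\bar x,\bar y)$, because for $d\in C_{\xi_0}$ one has $\nabla f(\bar x,\bar y)^\top d\leq\xi_0^\top\delta_x\leq\varphi'(\bar x;\delta_x)$. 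Hence \eqref{eq:no_descent_direction} yields $\nabla F(\bar x,\bar y)^\top d\geq 0$ for all $d\in C_{\xi_0}$, i.e.\ $-\nabla F(\bar x,\bar y)\in C_{\xi_0}^\circ$. Applying the Farkas representation of the polar of a polyhedral cone recalled in \cref{sec:preliminaries}, I obtain $\nu^G\geq 0$, $\nu^\textup{vf}\geq 0$, $\nu^g\geq 0$ with
\[
	-\nabla F(\bar x,\bar y)=\sum_{i\in\bar I^G}\nu^G_i\nabla G_i(\bar x,\bar y)+\nu^\textup{vf}\bigl(\nabla f(\bar x,\bar y)-\hat\xi_0\bigr)+\sum_{i\in\bar I^g}\nu^g_i\nabla g_i(\bar x,\bar y).
\]
Reading off the $y$-block gives \eqref{eq:KKT_LLVF_abstract_y}, while the $x$-block rearranges to $\nabla_xF(\bar x,\bar y)+\nabla_xG(\bar x,\bar y)^\top\nu^G+\nu^\textup{vf}\nabla_xf(\bar x,\bar y)+\nabla_xg(\bar x,\bar y)^\top\nu^g=\nu^\textup{vf}\xi_0\in\nu^\textup{vf}\partial^c\varphi(\bar x)$, which is \eqref{eq:KKT_LLVF_abstract_x}; extending $\nu^G,\nu^g$ by zero on inactive indices yields \eqref{eq:KKT_LLVF_abstract_rho} and \eqref{eq:KKT_LLVF_abstract_mu}. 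When $\nu^\textup{vf}=0$ the required membership holds trivially since $\nu^\textup{vf}\partial^c\varphi(\bar x)=\{0\}$.

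For part~(ii), let $d=(\delta_x,\delta_y)\in\mathcal L^\varphi_M(\bar x,\bar y)$ and let $\nu^G,\nu^\textup{vf},\nu^g$ together with some $\xi\in\partial^c\varphi(\bar x)$ witness \eqref{eq:KKT_LLVF_abstract}. Solving \eqref{eq:KKT_LLVF_abstract_x} and \eqref{eq:KKT_LLVF_abstract_y} for $\nabla_xF(\bar x,\bar y)$ and $\nabla_yF(\bar x,\bar y)$, substituting into $\nabla F(\bar x,\bar y)^\top d$, and regrouping by multiplier gives
\[
	\nabla F(\bar x,\bar y)^\top d
	=
	\nu^\textup{vf}\bigl(\xi^\top\delta_x-\nabla f(\bar x,\bar y)^\top d\bigr)
	-\sum_{i\in\bar I^G}\nu^G_i\nabla G_i(\bar x,\bar y)^\top d
	-\sum_{i\in\bar I^g}\nu^g_i\nabla g_i(\bar x,\bar y)^\top d.
\]
The last two sums are nonpositive since $\nu^G,\nu^g\geq 0$ while $\nabla G_i(\bar x,\bar y)^\top d\leq 0$ and $\nabla g_i(\bar x,\bar y)^\top d\leq 0$ on the active indices, so their negatives are nonnegative. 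For the first term I use Clarke regularity of $-\varphi$: combined with $\partial^c(-\varphi)(\bar x)=-\partial^c\varphi(\bar x)$ it gives $\varphi'(\bar x;\delta_x)=\min\{\xi'^\top\delta_x\,|\,\xi'\in\partial^c\varphi(\bar x)\}$, whence $\xi^\top\delta_x\geq\varphi'(\bar x;\delta_x)\geq\nabla f(\bar x,\bar y)^\top d$, the last inequality being the defining constraint of $\mathcal L^\varphi_M(\bar x,\bar y)$. As $\nu^\textup{vf}\geq 0$, the first term is nonnegative too, and \eqref{eq:no_descent_direction} follows.

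I expect the only delicate points to be the regrouping identity of part~(ii) and, above all, the correct orientation of the subdifferential inequalities: one must use Clarke regularity of $\varphi$ to obtain $\xi^\top\delta_x\leq\varphi'(\bar x;\delta_x)$ (needed to place the polyhedral subcone $C_{\xi_0}$ inside the linearization cone) and Clarke regularity of $-\varphi$ to obtain $\xi^\top\delta_x\geq\varphi'(\bar x;\delta_x)$ (needed to sign the optimal-value term). The remaining steps---nonemptiness of $\partial^c\varphi(\bar x)$, the Farkas dualization, and the bookkeeping of inactive multipliers---are routine.
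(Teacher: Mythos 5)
Your proposal is correct and takes essentially the same route as the paper: part (i) rests on the Clarke-regularity identity $\varphi'(\bar x;\delta_x)=\varphi^\circ(\bar x;\delta_x)=\max\{\xi^\top\delta_x\,|\,\xi\in\partial^c\varphi(\bar x)\}$ followed by Farkas dualization of a polyhedral cone indexed by a subgradient (the paper phrases this via the full union representation of $\mathcal L^\varphi_M(\bar x,\bar y)$ and the polarization rule, while you only use the inclusion $C_{\xi_0}\subseteq\mathcal L^\varphi_M(\bar x,\bar y)$, which, since $\xi_0$ is arbitrary, also recovers the stronger per-subgradient statement recorded in \cref{rem:B_St_LLVF_varphi_Clarke_regular}). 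Part (ii) is the paper's argument verbatim: pairing the KKT system with $d$, signing the $G$- and $g$-terms by feasibility of $d$ in the linearization cone, and signing the value-function term through Clarke regularity of $-\varphi$ via $\partial^c(-\varphi)(\bar x)=-\partial^c\varphi(\bar x)$.
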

\begin{proof}
	We show both statements separately.
	\begin{enumerate}
		\item For brevity, we introduce a polyhedral cone $\mathcal K(\bar x,\bar y)\subseteq\R^n\times\R^m$ by
			\[
				\mathcal K(\bar x,\bar y)
				:=
				\left\{
					d\in\R^n\times\R^m\,\middle|\,
						\begin{aligned}
							\nabla G_i(\bar x,\bar y)^\top d&\,\leq\,0&&i\in \bar I^G\\
							\nabla g_i(\bar x,\bar y)^\top d&\,\leq\,0&&i\in \bar I^g
						\end{aligned}
				\right\}.
			\]
			By definition and the assumptions on $\varphi$, we have
			\begin{align*}
				\mathcal L^\varphi_M(\bar x,\bar y)
				&=
				\{d:=(\delta_x,\delta_y)\in\mathcal K(\bar x,\bar y)
				\,|\,
				\nabla f(\bar x,\bar y)^\top d-\varphi'(\bar x;\delta_x)\leq 0
				\}\\
				&=
				\{d:=(\delta_x,\delta_y)\in\mathcal K(\bar x,\bar y)
				\,|\,
				\nabla f(\bar x,\bar y)^\top d-\varphi^\circ(\bar x;\delta_x)\leq 0
				\}\\
				&=
				\{d:=(\delta_x,\delta_y)\in\mathcal K(\bar x,\bar y)
				\,|\,
				\nabla f(\bar x,\bar y)^\top d-\max\{\xi^\top\delta_x\,|\,\xi\in\partial^c\varphi(\bar x)\}\leq 0
				\}\\
				&=
				\bigcup\limits_{\xi\in\partial^c\varphi(\bar x)}
				\{d:=(\delta_x,\delta_y)\in\mathcal K(\bar x,\bar y)
				\,|\,
				\nabla f(\bar x,\bar y)^\top d-\xi^\top\delta_x\leq 0
				\}.
			\end{align*}
			Applying the polarization rule for unions and polyhedral cones yields
			\begin{align*}
				\mathcal L^\varphi_M(\bar x,\bar y)^\circ
				&=
				\bigcap\limits_{\xi\in\partial^c\varphi(\bar x)}
				\{d:=(\delta_x,\delta_y)\in\mathcal K(\bar x,\bar y)
				\,|\,
				\nabla f(\bar x,\bar y)^\top d-\xi^\top\delta_x\leq 0
				\}^\circ\\
				&=
				\bigcap\limits_{\xi\in\partial^c\varphi(\bar x)}
				\left\{
					(\eta,\theta)\,\middle|\,
					\begin{aligned}
						&\eta=\nabla_x G(\bar x,\bar y)^\top\nu^G+\nu^\textup{vf}(\nabla_xf(\bar x,\bar y)-\xi)+\nabla_xg(\bar x,\bar y)^\top\nu^g,\\
						&\theta=\nabla_yG(\bar x,\bar y)^\top\nu^G+\nu^\textup{vf}\nabla_yf(\bar x,\bar y)+\nabla_yg(\bar x,\bar y)^\top\nu^g,\\
						&\nu^G,\nu^\textup{vf},\nu^g\geq 0,\\
						&\forall i\notin\bar I^G\colon\,\nu^G_i=0,\\
						&\forall i\notin\bar I^g\colon\,\nu^g_i=0
					\end{aligned}
				\right\}.
			\end{align*}
			Now, the statement follows observing that \eqref{eq:no_descent_direction} equals
			$-\nabla F(\bar x,\bar y)\in\mathcal L^\varphi_M(\bar x,\bar y)^\circ$ by definition
			of the polar cone.
		\item Let $(\bar x,\bar y)$ be a KKT-point of \eqref{eq:LLVF}. Then, there are multipliers
			$\nu^G\in\R^p$, $\nu^\textup{vf}\geq 0$, and $\nu^g\in\R^q$ as well as $\xi\in\partial^c\varphi(\bar x)$
			which satisfy \eqref{eq:KKT_LLVF_abstract_y}, \eqref{eq:KKT_LLVF_abstract_rho}, \eqref{eq:KKT_LLVF_abstract_mu},
			and
			\begin{equation}\label{eq:KKT_LLVF_abstract_x_special_xi}
				0=\nabla_xF(\bar x,\bar y)+\nabla_xG(\bar x,\bar y)^\top\nu^G+\nu^\textup{vf}(\nabla_xf(\bar x,\bar y)-\xi)
					+\nabla_xg(\bar x,\bar y)^\top\nu^g.
			\end{equation}
			Now, choose $d:=(\delta_x,\delta_y)\in\mathcal L^\varphi_M(\bar x,\bar y)$ arbitrarily. Multiplying
			\eqref{eq:KKT_LLVF_abstract_x_special_xi} with $\delta_x$ as well as \eqref{eq:KKT_LLVF_abstract_y} with $\delta_y$
			and summing the resulting equations up yields
			\begin{align*}
				\nabla F(\bar x,\bar y)^\top d
				&=
				-(\nabla G(\bar x,\bar y)d)^\top\nu^G-\nu^\textup{vf}(\nabla f(\bar x,\bar y)^\top d-\xi^\top\delta_x)
				-(\nabla g(\bar x,\bar y)d)^\top\nu^g\\
				&\geq
				\nu^\textup{vf}(\xi^\top\delta_x-\nabla f(\bar x,\bar y)^\top d)\\
				&\geq
				\nu^\textup{vf}(\min\{\zeta^\top\delta_x\,|\,\zeta\in\partial^c\varphi(\bar x)\}-\nabla f(\bar x,\bar y)^\top d)\\
				&=
				\nu^\textup{vf}(-\max\{(-\zeta)^\top\delta_x\,|\,\zeta\in\partial^c\varphi(\bar x)\}-\nabla f(\bar x,\bar y)^\top d)\\
				&=
				\nu^\textup{vf}(-\max\{\zeta^\top\delta_x\,|\,\zeta\in\partial^c(-\varphi)(\bar x)\}-\nabla f(\bar x,\bar y)^\top d)\\
				&=
				\nu^\textup{vf}(-(-\varphi)^\circ(\bar x;\delta_x)-\nabla f(\bar x,\bar y)^\top d)\\
				&=
				\nu^\textup{vf}(-(-\varphi)'(\bar x;\delta_x)-\nabla f(\bar x,\bar y)^\top d)\\
				&=
				\nu^\textup{vf}(\varphi'(\bar x;\delta_x)-\nabla f(\bar x,\bar y)^\top d)
				\geq
				0,			
			\end{align*}
			and this shows the desired result.
	\end{enumerate}
	Hence, the proof is complete.
\end{proof}

We close the abstract analysis in this section with a brief remark regarding the above result.
\begin{remark}\label{rem:B_St_LLVF_varphi_Clarke_regular}
	The proof of \cref{prop:B_St_vs_KKT_of_LLVF} even shows the following stronger statement:
	Fix $(\bar x,\bar y)\in M$ such that $\varphi$ is directionally differentiable,
	locally Lipschitz continuous, and Clarke regular at $\bar x$ (all these assumptions hold if $\varphi$
	is locally convex around $\bar x$). If \eqref{eq:no_descent_direction}
	holds, then, for each $\xi\in\partial^c\varphi(\bar x)$, there exist multipliers
	$\nu^G\in\R^p$, $\nu^\textup{vf}\geq 0$, and $\nu^g\in\R^q$ which satisfy
	\eqref{eq:KKT_LLVF_abstract_x_special_xi}, \eqref{eq:KKT_LLVF_abstract_y}, \eqref{eq:KKT_LLVF_abstract_rho},
	and \eqref{eq:KKT_LLVF_abstract_mu}.
\end{remark}

\subsection{Fully linear lower level}\label{sec:fully_linear_lower_level}

Here, we focus on the setting
\begin{equation}\label{eq:fully_linear_lower_level}
	\forall x\in\R^n\colon\quad
	S(x):=\argmin\limits_y\{c^\top y\,|\,Ax+By\leq b\}
\end{equation}
where $A\in\R^{q\times n}$, $B\in\R^{q\times m}$, $b\in\R^q$, and $c\in\R^m$ are fixed matrices.
Obviously, this particular lower level problem is fully linear and, thus, the associated
optimal value function $\varphi$ is convex and piecewise affine on $\dom\varphi$. More precisely, there
exist only finitely many so-called regions of stability such that $\varphi$ is affine on each
of these sets. In particular, $\varphi$ can be represented as the maximum of finitely many
affine functions on $\dom\varphi$. As a consequence, the maximum-rules from
\cref{lem:max_rule,lem:second_order_epiregularity_max} show that $\varphi$ is second-order
directionally differentiable at each point $x\in\intr\dom\varphi$.
Clearly, $\varphi$ is Lipschitz continuous on $\intr\dom\varphi$.

Next, we fix a point $\bar x\in\dom\varphi$. Then, $S(\bar x)\neq\varnothing$ holds true, and
by strong duality, the solution set of the lower level dual, given by
\[
	\bar\Lambda(\bar x):=\argmax\limits_\lambda\{(A\bar x-b)^\top\lambda\,|\,B^\top\lambda=-c,\,\lambda\geq 0\},
\]
is nonempty as well. With the aid of this set, we can express the subdifferential of $\varphi$ at $\bar x$.
\begin{lemma}\label{lem:subdifferential_fully_linear_lower_level}
Fix $\bar x\in\dom\varphi$. Then, it holds
\[
	\partial\varphi(\bar x)=\{A^\top\lambda\,|\,\lambda\in \bar\Lambda(\bar x)\}.
\]
\end{lemma}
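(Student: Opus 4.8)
The plan is to use linear programming duality to write $\varphi$ as a supremum of affine functions and then to read off its convex subdifferential through the conjugacy and chain-rule calculus for \emph{polyhedral} convex functions. First I would record that, since $\bar x\in\dom\varphi$ forces the dual feasible set $\Lambda_0:=\{\lambda\in\R^q\,|\,B^\top\lambda=-c,\ \lambda\geq 0\}$ to be nonempty and the common optimal value to be finite at $\bar x$, strong duality of linear programming yields the global representation $\varphi(x)=\sup_{\lambda\in\Lambda_0}(Ax-b)^\top\lambda$ (both sides being $+\infty$ precisely where the lower level is primal-infeasible). In particular $\varphi$ is convex and polyhedral, and $\bar\Lambda(\bar x)=\argmax_{\lambda\in\Lambda_0}(A\bar x-b)^\top\lambda$ is the nonempty optimal face of the dual.

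The inclusion ``$\supseteq$'' is the easy one and requires no qualification. For $\lambda^*\in\bar\Lambda(\bar x)$ and arbitrary $x$, feasibility of $\lambda^*$ for the supremum defining $\varphi(x)$ together with its optimality at $\bar x$ gives
\[
	\varphi(x)\geq(Ax-b)^\top\lambda^*=(A\bar x-b)^\top\lambda^*+(A^\top\lambda^*)^\top(x-\bar x)=\varphi(\bar x)+(A^\top\lambda^*)^\top(x-\bar x),
\]
which is exactly the subgradient inequality, so $A^\top\lambda^*\in\partial\varphi(\bar x)$.

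For the reverse inclusion I would pass to the conjugate. Setting $\rho(\lambda):=b^\top\lambda$ for $\lambda\in\Lambda_0$ and $\rho(\lambda):=+\infty$ otherwise, the function $\rho$ is polyhedral, proper, closed, and convex, and its conjugate is $\rho^*(u)=\sup_{\lambda\in\Lambda_0}(u-b)^\top\lambda=:\phi(u)$, so that $\varphi=\phi\circ A$ with $A$ read as the linear map $x\mapsto Ax$. The conjugate subgradient theorem then identifies
\[
	\partial\phi(A\bar x)=\partial\rho^*(A\bar x)=\argmax_{\lambda\in\Lambda_0}(A\bar x-b)^\top\lambda=\bar\Lambda(\bar x).
\]
Applying the chain rule $\partial(\phi\circ A)(\bar x)=A^\top\partial\phi(A\bar x)$ then gives $\partial\varphi(\bar x)=A^\top\bar\Lambda(\bar x)$, and together with the previous inclusion this closes the proof.

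The main obstacle is the justification of this last chain rule at a possibly boundary point $A\bar x$ of $\dom\phi$: the classical chain rule of convex subdifferential calculus would require the range of $x\mapsto Ax$ to meet the relative interior of $\dom\phi$, which need not hold here. The point to stress is that $\phi$ is polyhedral, and for polyhedral convex functions the subdifferential chain rule holds under the weakest possible hypothesis, namely $A\bar x\in\dom\phi$ (equivalently $\bar x\in\dom\varphi$), with no relative-interior condition; this polyhedral refinement of the standard calculus is exactly what one should cite. If one is content to assume $\bar x\in\intr\dom\varphi$, the same conclusion follows more elementarily by observing that $\varphi'(\bar x;\cdot)$ is then a finite polyhedral sublinear function equal to the support function of $A^\top\bar\Lambda(\bar x)$, so that $\partial\varphi(\bar x)$ and $A^\top\bar\Lambda(\bar x)$ are closed convex sets with identical support functions and hence coincide.
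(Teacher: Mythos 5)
Your proof is correct, but it takes a genuinely different route from the paper's in the nontrivial inclusion $\partial\varphi(\bar x)\subseteq A^\top\bar\Lambda(\bar x)$. The paper proceeds by hand: given $\xi\in\partial\varphi(\bar x)$ and $\bar y\in S(\bar x)$, the subgradient inequality is read as saying that $(\bar x,\bar y)$ solves the joint linear program $\min_{x,y}\{-\xi^\top x+c^\top y\,|\,Ax+By\leq b\}$; the KKT conditions of this program yield some $\lambda\geq 0$ with $\xi=A^\top\lambda$, $c+B^\top\lambda=0$, and $\lambda^\top(A\bar x+B\bar y-b)=0$, and strong duality then places $\lambda$ in $\bar\Lambda(\bar x)$. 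You instead write $\varphi=\rho^*\circ A$, where $\rho$ equals $b^\top\lambda$ on the dual feasible set and $+\infty$ elsewhere, compute $\partial\rho^*(A\bar x)=\bar\Lambda(\bar x)$ via the conjugate subgradient theorem, and then apply the chain rule for compositions with linear maps, correctly flagging that the standard relative-interior qualification can fail at a boundary point of $\dom\varphi$ and that the polyhedral refinement of the chain rule (equality under the mere condition $A\bar x\in\dom\rho^*$) is what closes the argument. Your treatment of the easy inclusion also differs: a direct one-line verification from the global dual representation $\varphi(x)=\sup\{(Ax-b)^\top\lambda\,|\,B^\top\lambda=-c,\,\lambda\geq 0\}$, where the paper argues by contradiction; your version is arguably cleaner. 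What the paper's approach buys is self-containedness, since nothing beyond LP optimality conditions and strong duality is invoked; what yours buys is structure, since it exhibits $\varphi$ once and for all as a polyhedral convex function, reduces the lemma to standard polyhedral subdifferential calculus, and isolates exactly the point (the missing interiority qualification) where polyhedrality is indispensable. The only caveat is that the polyhedral chain rule carries the entire weight of the hard inclusion in your argument, so it must be cited precisely (it is, e.g., Theorem~23.9 in Rockafellar's \emph{Convex Analysis}).
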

\begin{proof}
	We show both inclusions separately. 
	For the first one, fix $\xi\in\partial\varphi(\bar x)$ and some $\bar y\in S(\bar x)$.
	Then, it holds $c^\top\bar y=\varphi(\bar x)$.
	By definition of the subdifferential (in the sense of convex analysis), $(\bar x,\bar y)$ solves
	the linear programming problem
	\[
		\min\limits_{x,y}\{-\xi^\top x+c^\top y\,|\,Ax+By\leq b\},
	\]
	i.e., we can find some $\lambda\in\R^q$ such that
	\[
		-\xi+A^\top\lambda=0,\quad c+B^\top\lambda=0,\quad\lambda^\top(A\bar x+B\bar y-b)=0, \quad \lambda\geq 0.
	\]
	By strong duality of linear programming, the last three conditions ensure $\lambda\in \bar\Lambda(\bar x)$.

	Conversely, fix $\bar\lambda\in \bar\Lambda(\bar x)$ and assume that $A^\top\bar\lambda$ does
	not belong to $\partial\varphi(\bar x)$. Then, we find some $x\in\R^n$ such that
	$\varphi(x)<\varphi(\bar x)+\bar\lambda^\top A(x-\bar x)$ holds true. By definition of $\varphi$, there need to
	exist $y\in K(x)$ and $\bar y\in S(\bar x)$ which satisfy $c^\top y<c^\top\bar y+\bar\lambda^\top A(x-\bar x)$.
	Due to $\bar\lambda\in \bar\Lambda(\bar x)$, it holds
	\[
		c+B^\top\bar\lambda=0,\quad\bar\lambda^\top(A\bar x+B\bar y-b)=0,\quad \bar\lambda\geq 0,
	\]
	and this yields
	\begin{align*}
		c^\top y
		<
		c^\top\bar y+\bar\lambda^\top A(x-\bar x)
		\leq
		c^\top\bar y+\bar\lambda^\top (b-By)+\bar\lambda^\top(B\bar y-b)
		=
		(c+B^\top\bar\lambda)^\top\bar y-(B^\top\bar\lambda)^\top y
		=c^\top y,
	\end{align*}
	which is a contradiction.
\end{proof}

Now, we assume that $\bar x\in\intr\dom\varphi$. This guarantees that $\partial\varphi(\bar x)$ is compact.
Thus, \cref{lem:subdifferential_fully_linear_lower_level} yields
\[
	\forall \delta_x\in\R^n\colon\quad
	\varphi'(\bar x;\delta_x)=\max\limits_{\lambda\in \bar\Lambda(\bar x)} \delta_x^\top A^\top\lambda.
\]
Noting that locally around $\bar x$, $\varphi$ equals the maximum of finitely many affine functions, we have
\[
	\varphi(\bar x+tr)-\varphi(\bar x)=t\varphi'(\bar x;r)
\]
for all $r\in\R^n$ provided $t>0$ is sufficiently small. Particularly, we obtain
\[
	\varphi(\bar x+t\delta_x+\tfrac12t^2\omega_x)-\varphi(\bar x)
	=t\max\limits_{\lambda\in \bar\Lambda(\bar x)}\{\delta_x^\top A^\top\lambda +\tfrac12t\omega_x^\top A^\top\lambda\}
\]
for all $\delta_x,\omega_x\in\R^n$ if only $t>0$ is sufficiently small.
Noting that $A^\top \bar\Lambda(\bar x)$ is a compact polyhedron, $\delta_x^\top A^\top\lambda$ dominates the maximization term as
$t>0$ is small. As a consequence, for sufficiently small $t>0$, we have
\begin{equation}\label{eq:Taylor_like_estimate_fully_linear_lower_level_value_function}
	\varphi(\bar x+t\delta_x+\tfrac12t^2\omega_x)-\varphi(\bar x)
	=
	t\varphi'(\bar x;\delta_x)
	+
	\tfrac12t^2\max\limits_{\bar \lambda}
		\left\{
			\omega_x^\top A^\top\bar\lambda\,\middle|
				\,\bar \lambda\in\argmax\limits_{\lambda\in\bar\Lambda(\bar x)} \delta_x^\top A^\top\lambda
		\right\}.
\end{equation}
As a result of these arguments, it follows
\[
	\forall \delta_x,\omega_x\in\R^n\colon\quad
	\varphi''(\bar x;\delta_x,\omega_x)
	=
	\max\limits_{\bar\lambda}
		\left\{
			\omega_x^\top A^\top\bar\lambda\,\middle|\,\bar\lambda\in\argmax\limits_{\lambda\in \bar\Lambda(\bar x)} \delta_x^\top A^\top\lambda
		\right\},
\]
and due to \eqref{eq:Taylor_like_estimate_fully_linear_lower_level_value_function}, $\varphi$ and $-\varphi$
both are second-order epi-regular at $\bar x$.
The above formula for the second-order directional derivative of $\varphi$
matches available results on the second-order directional derivative of optimal value functions
in nonlinear parametric optimization, see, e.g., \cite[Theorem~4.1]{Shapiro1988} and \cite[Theorem~4.2]{Shapiro1988b}.
The full linearity of the lower level problem causes $\varphi$ to be a curvature-free function possessing some
kinks. In the above formula, this is reflected by the fact that no second-order information on the lower level
data appears since all second-order derivatives vanish.

Let us note that local minimizers of \eqref{eq:BPP} where the lower level stage is given as in \eqref{eq:fully_linear_lower_level}
while $G$ is affine are KKT-points of \eqref{eq:LLVF}, see \cite[Corollary~4.1]{Ye2004},
and this statement is independent of the underlying upper level objective function.
Even in the case where $G$ is an arbitrary smooth map, the inherent partial calmness at all local minimizers, see
\cite[Proposition~4.1]{YeZhu1995}, implies that under
validity of reasonable constraint qualifications, the KKT-conditions of \eqref{eq:LLVF} provide a necessary optimality
condition for \eqref{eq:BPP}.
As a consequence, there is some reasonable hope that the first-order stationarity condition \eqref{eq:no_descent_direction}
holds in the present setting although this is not naturally guaranteed if $\varphi$ is nonsmooth at the reference point,
see \cref{prop:B_St_vs_KKT_of_LLVF}.
The above formula for the directional derivative of $\varphi$ can be used to characterize the underlying linearization cone
and critical cone explicitly. Furthermore, \eqref{eq:second_order_sufficient_condition_subproblem} can be characterized in
terms of initial problem data. This allows us to evaluate the second-order sufficient optimality condition from
\cref{thm:abstract_second_order_condition}.

\subsection{Linear lower level with parameters in the objective function}\label{sec:linear_lower_level_param_in_obj}

Here, we focus on the lower level problem given by
\begin{equation*}
	\forall x\in\R^n\colon\quad
	S(x):=\argmin\limits_y\{(Ax+c)^\top y\,|\,By\leq b\}
\end{equation*}
where $A\in\R^{m\times n}$, $B\in\R^{q\times m}$, $b\in\R^q$, and $c\in\R^m$ are fixed matrices.
For simplicity, let us assume that the set $K:=\{y\in\R^m\,|\,By\leq b\}$ is compact.
Then, $\dom S=\R^n$ holds true and the associated optimal value function $\varphi\colon\R^n\to\R$
is given by
\[
	\forall x\in\R^n\colon\quad
	\varphi(x)=\min\{(Ax+c)^\top y_l\,|\,l=1,\ldots,\ell\}
\]
where $y_1,\ldots,y_\ell\in K$ are the (finitely many) vertices of $K$.
Observing that $\varphi$ is the minimum of finitely many affine functions, it is globally Lipschitz
continuous. Using a minimum-rule similar to the maximum-rule from \cref{lem:max_rule}, one can
easily check that $\varphi$ is directionally differentiable, and it holds
\[
	\forall\delta_x\in\R^n\colon\quad
	\varphi'(\bar x;\delta_x)=\min\{\delta_x^\top A^\top y_l\,|\,l\in I(\bar x)\}
\]
for all $\bar x\in\R^n$ where we used
\[
	I(\bar x):=\{\bar l\in\{1,\ldots,\ell\}\,|\,(A\bar x+c)^\top y_{\bar l}
	=\min\{(A\bar x+c)^\top y_l\,|\,l=1,\ldots,\ell\}\}.
\]
Noting that we have $S(\bar x)=\conv\{y_l\,|\,l\in I(\bar x)\}$, it follows
\[
	\forall\delta_x\in\R^n\colon\quad
	\varphi'(\bar x;\delta_x)=\min\{\delta_x^\top A^\top y\,|\,y\in S(\bar x)\}.
\]
Similar arguments as used in \cref{sec:fully_linear_lower_level} can be exploited to infer that $\varphi$ is
second-order directionally differentiable and that the formula
\[
	\forall \delta_x,\omega_x\in\R^n\colon\quad
	\varphi''(\bar x;\delta_x,\omega_x)
	=
	\min\limits_{\tilde{y}}\left\{
		\omega_x^\top A^\top \tilde y\,\middle|\,
		\tilde y\in\argmin\limits_{y\in S(\bar x)} \delta_x^\top A^\top y
	\right\}
\]
holds for all $\bar x\in\R^n$.
Trivially, $-\varphi$ is second-order directionally differentiable, too.
Noting that $\epi(-\varphi)$ is a polyhedron, $-\varphi$ is second-order epi-regular,
see \cref{sec:generalized_differentiation}.
\begin{example}\label{ex:second_order_sufficient_condition_linear_lower_level_parameter_in_objective}
	Let us consider the bilevel programming problem
	\[
		\min\limits_{x,y}\{\tfrac12(x+3)^2+\tfrac12y^2\,|\,y\in S(x)\}
	\]
	where $S\colon\R\rightrightarrows\R$ is the solution map of $\min_y\{xy\,|\,y\in[0,1]\}$.
	Observe that this is a slightly modified version of the problem discussed in
	\cref{ex:first_order_sufficient_condition,ex:second_order_sufficient_condition_varphi_locally_smooth}.
	With the aid of \cref{thm:abstract_second_order_condition}, we investigate the point
	$(\bar x,\bar y):=(0,0)$ and note that $\varphi$ is nonsmooth there.
	However, the above considerations show that $\varphi$ is Lipschitzian and second-order directionally
	differentiable while $-\varphi$ is second-order epi-regular.
	
	Obviously, we have $\varphi'(\bar x;\delta_x)=\min\{\delta_x,0\}$ for all $\delta_x\in\R$.
	This leads to
	\[
		\mathcal L^\varphi_M(\bar x,\bar y)=\R^2_+,\qquad
		\mathcal C^\varphi_M(\bar x,\bar y)=\{0\}\times\R_+.
	\]
	Each nonvanishing critical direction from $\mathcal C^\varphi_M(\bar x,\bar y)$ is of the form
	$d:=(0,\delta_y)$ with $\delta_y>0$. Hence, in \eqref{eq:second_order_sufficient_condition_subproblem},
	$\varphi''(\bar x;0,\omega_x)$ needs to be evaluated, and the above formula gives us
	$\varphi''(\bar x;0,\omega_x)=\min\{\omega_x,0\}$ for all $\omega_x\in \R$.
	Furthermore, we have $\bar I^g(d)=\varnothing$ in the situation at hand.
	Problem \eqref{eq:second_order_sufficient_condition_subproblem} therefore reduces to
	\[
		\min\limits_{\omega_x,\omega_y,\alpha}
		\{
			\alpha
			\,|\,
			3\omega_x+\delta_y^2\leq\alpha,\,-\min\{\omega_x,0\}\leq\alpha
		\}.
	\]
	For $\omega_x\geq 0$, the first constraint implies $\alpha\geq\delta_y^2$.
	If $\omega_x\in(-\tfrac{1}{4}\delta^2_y,0)$ holds, then the first constraint yields $\alpha\geq \tfrac14\delta^2_y$.
	For $\omega_x\leq-\tfrac14\delta^2_y$, we infer $\alpha\geq -\omega_x\geq\tfrac14\delta^2_y$ from the second
	constraint. Thus, for each
	$d:=(0,\delta_y)\in\mathcal C^\varphi_M(\bar x,\bar y)\setminus\{0\}$, the optimal value of the associated problem
	\eqref{eq:second_order_sufficient_condition_subproblem} is not smaller than $\tfrac14\delta^2_y$.
	Consequently, \cref{thm:abstract_second_order_condition} shows that $(\bar x,\bar y)$ is a strict local
	minimizer of the given bilevel programming problem.
	
	Let us briefly note that the first-order sufficient optimality condition from \cref{thm:first_order_sufficient_condition}
	does \emph{not} hold at $(\bar x,\bar y)$. This is due to the fact that the first-order derivative w.r.t.\ $y$ of the
	upper level objective function vanishes at the reference point, cf. \cref{ex:first_order_sufficient_condition} where this
	does not happen.
\end{example}

\subsection{Stable unique lower level solutions}\label{sec:unique_lower_level_solution_under_LLICQ}

In this section, we take a closer look at situations where the lower level solution under consideration
is unique which is provided via a standard second-order sufficient condition and convexity. Under validity
of LLICQ, one does not only obtain the continuous differentiability of $\varphi$  but also its
second-order directional differentiability as well as a ready-to-use formula for the computation of the
second-order directional derivative. These results are subsumed in the upcoming proposition.
\begin{proposition}\label{prop:locally_unique_lower_level_solution}
	For each $x\in\R^n$, let $f(x,\cdot)\colon\R^m\to\R$ be convex and let $g(x,\cdot)\colon\R^m\to\R^q$ be
	componentwise convex.
	Fix a point $(\bar x,\bar y)\in M$ where LLICQ and LSOSC hold.
	Let $\bar\lambda\in\R^q$ be the uniquely determined associated lower level Lagrange multiplier.
	Then, the following assertions hold:
	\begin{enumerate}
		\item $\varphi$ is continuously differentiable at $\bar x$ and it holds
			$\nabla\varphi(\bar x)=\nabla_xL(\bar x,\bar y,\bar\lambda)$,
		\item $\varphi$ is second-order directionally differentiable at $\bar x$ and it holds
			\begin{equation}\label{eq:second_order_dir_der_varphi_unique_stable_lower_level_solution}
				\begin{split}
				\varphi''(\bar x;\delta_x,\omega_x)
				&=
				\nabla_xL(\bar x,\bar y,\bar\lambda)\omega_x\\
				&\quad +\inf\limits_{\delta_y}
				\left\{
					d^\top\nabla^2L(\bar x,\bar y,\bar\lambda)d
					\,\middle|\,
					\begin{aligned}
						\nabla g_i(\bar x,\bar y)^\top d &\,=\,0 &&i\in\bar I^g,\,\bar\lambda_i>0\\
						\nabla g_i(\bar x,\bar y)^\top d &\,\leq\, 0 &&i\in \bar I^g,\,\bar\lambda_i=0
					\end{aligned}
				\right\}
				\end{split}
			\end{equation}
			for all $\delta_x,\omega_x\in\R^n$ where we used $d:=(\delta_x,\delta_y)\in\R^n\times\R^m$ again,
		\item $\varphi$ and $-\varphi$ are second-order epi-regular at $\bar x$.
	\end{enumerate}
\end{proposition}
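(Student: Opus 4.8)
The plan is to reduce all three assertions to classical parametric sensitivity analysis for the lower level program. The convexity of $f(x,\cdot)$ and $g(x,\cdot)$ together with LLICQ and LSOSC at $(\bar x,\bar y)$ guarantee that the lower level solution is strongly stable: there are a neighborhood $U\subseteq\R^n$ of $\bar x$ and locally Lipschitzian maps $y\colon U\to\R^m$ and $\lambda\colon U\to\R^q$ with $S(x)=\{y(x)\}$, $\Lambda(x,y(x))=\{\lambda(x)\}$, $y(\bar x)=\bar y$, and $\lambda(\bar x)=\bar\lambda$ (note that LLICQ forces uniqueness of the multiplier). I would record at the outset that by convexity the KKT conditions are necessary and sufficient for lower level optimality, so that $\varphi(x)=f(x,y(x))=L(x,y(x),\lambda(x))$ for all $x\in U$ by complementary slackness.

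For assertion (1), I would invoke the standard envelope theorem, see \cite{Fiacco1983}. Differentiating the representation $\varphi(x)=L(x,y(x),\lambda(x))$ and cancelling the terms carried by the stationarity identity $\nabla_yL(x,y(x),\lambda(x))=0$ and by complementary slackness yields that $\varphi$ is differentiable throughout $U$ with $\nabla\varphi(x)=\nabla_xL(x,y(x),\lambda(x))$. Since $x\mapsto(y(x),\lambda(x))$ is continuous, this gradient depends continuously on $x$, so $\varphi$ is continuously differentiable at $\bar x$ with $\nabla\varphi(\bar x)=\nabla_xL(\bar x,\bar y,\bar\lambda)$.

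For assertion (2), the technical core, I would appeal to the second-order sensitivity theory of Shapiro, see \cite{Shapiro1988b} and \cite{BonnansShapiro2000}. Since LLICQ implies LMFCQ and LCRCQ and LSOSC is the strong second-order condition under a unique multiplier, the solution map is directionally differentiable and $y'(\bar x;\delta_x)$ is the unique minimizer of the quadratic program on the right-hand side of \eqref{eq:second_order_dir_der_varphi_unique_stable_lower_level_solution}, whose feasible set consists exactly of the lower level critical directions compatible with $\delta_x$, cf.\ \eqref{eq:lower_level_critical_cone}; LSOSC makes $d^\top\nabla^2_{yy}L(\bar x,\bar y,\bar\lambda)d$ coercive on this cone, so the infimum is attained and finite. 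Concretely, I would expand $\varphi$ along the parabola $x(t):=\bar x+t\delta_x+\tfrac12t^2\omega_x$ by a two-sided estimate: the upper bound comes from inserting a feasible second-order path $y(t):=\bar y+t\delta_y+\tfrac12t^2w_y$ with $\delta_y$ in the critical cone, while the matching lower bound comes from the optimality of $y(x(t))$ together with the directional expansion $y(x(t))=\bar y+t\,y'(\bar x;\delta_x)+\oo(t)$ and LSOSC, which prevents the optimal path from deviating from a critical direction without increasing the objective. Dividing by $\tfrac12t^2$ and applying \eqref{eq:second_order_directional_derivative_smooth_function} to the smooth data produces the stated formula, the linear term $\nabla_xL(\bar x,\bar y,\bar\lambda)\omega_x$ accounting for the curvature of the path.

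For assertion (3), which I expect to be the \emph{main obstacle}, I would show that the bounds used in (2) persist for an arbitrary parabolic path, i.e.\ with $\omega_x$ replaced by a path $w(t)$ satisfying $tw(t)\to 0$. The lower bound then gives $\varphi(\bar x+t\delta_x+\tfrac12t^2w(t))\geq\varphi(\bar x)+t\varphi'(\bar x;\delta_x)+\tfrac12t^2\varphi''(\bar x;\delta_x,w(t))+\oo(t^2)$, which is precisely second-order epi-regularity of $\varphi$ in the sense of \cref{def:second_order_epiregularity}, while the matching upper bound yields the analogous inequality for $-\varphi$. Equivalently, one argues that LLICQ and LSOSC render the lower level feasible set $\{y\mid g(\bar x,y)\leq 0\}$ outer second-order regular at $\bar y$ and the objective smooth, so that the value function is two-sided second-order regular at $\bar x$ by \cite[Sections~3.3.4--3.3.5]{BonnansShapiro2000}. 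The delicate point is the uniformity of the $\oo(t^2)$ remainder in $w(t)$, for which the coercivity granted by LSOSC is essential.
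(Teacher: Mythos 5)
Your proposal is correct in substance and takes essentially the same route as the paper: the paper's proof consists of noting single-valuedness together with upper Lipschitz stability of the lower level solution map (citing \cite[Section~4]{Robinson1982}), observing that LLICQ implies the Strict Mangasarian--Fromovitz Condition, and then invoking \cite[Theorem~4.139]{BonnansShapiro2000} together with its subsequent remarks, which is precisely the second-order sensitivity theory whose internals you sketch. Two of your side claims are stronger than what the assumptions (or the cited results) deliver---full local Lipschitz continuity of $x\mapsto(y(x),\lambda(x))$ rather than upper Lipschitz stability at $\bar x$, and uniqueness of the minimizer of the quadratic program characterizing $y'(\bar x;\delta_x)$---but neither is actually needed for the two-sided estimates you describe, so this does not affect the substance.
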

\begin{proof}
	Under the assumptions made, there are a neighborhood $U\subseteq\R^n$ of $\bar x$,
	a mapping $s\colon U\to\R^m$, and a constant $\kappa>0$ such that $S(x)=\{s(x)\}$ holds
	for all $x\in U$ while the upper Lipschitz property
	\[
		\forall x\in U\colon\quad
		\norm{s(x)-\bar y}{2}\leq \kappa\norm{x-\bar x}{2}
	\]
	is valid, see, e.g., \cite[Section~4]{Robinson1982}.
	Noting that validity of LLICQ implies that the Strict Mangasarian--Fromovitz Condition holds
	for the lower level problem at $(\bar x,\bar y)$, the assertions of the proposition directly
	follow by applying \cite[Theorem~4.139]{BonnansShapiro2000} as well as the subsequently stated
	remarks.
\end{proof}

\begin{remark}\label{rem:locally_unique_lower_level_solutions_linearization_cone}
	Suppose that all the assumptions of \cref{prop:locally_unique_lower_level_solution} are valid.
	Then, it holds
	\[
		\mathcal L^\varphi_M(\bar x,\bar y)
		=
		\left\{
		d:=(\delta_x,\delta_y)\in\R^n\times\R^m\,\middle|\,
			\begin{aligned}
				\nabla G_i(\bar x,\bar y)^\top d&\,\leq\,0&&i\in \bar I^G\\
				\nabla f(\bar x,\bar y)^\top d-\nabla_xL(\bar x,\bar y,\bar\lambda)^\top\delta_x&\,\leq\,0&&\\
				\nabla g_i(\bar x,\bar y)^\top d&\,\leq\,0&&i\in \bar I^g
			\end{aligned}
		\right\}.
	\]
	Since $\varphi$ is continuously differentiable, \cref{prop:B_St_vs_KKT_of_LLVF} yields that
	\eqref{eq:no_descent_direction} is equivalent to the existence of multipliers
	$\nu^G\in\R^p$, $\nu^\textup{vf}\geq 0$, and $\nu^{g}\in\R^q$ which satisfy
	\eqref{eq:KKT_LLVF_abstract_y}, \eqref{eq:KKT_LLVF_abstract_rho}, \eqref{eq:KKT_LLVF_abstract_mu}, and
	\[
		0=\nabla_x F(\bar x,\bar y)+\nabla_x G(\bar x,\bar y)^\top\nu^G+\nabla_xg(\bar x,\bar y)^\top(\nu^g-\nu^\textup{vf}\bar\lambda).
	\]
\end{remark}

Let us finish this section with another illustrative example.
\begin{example}\label{ex:unique_lower_level_solution_varphi_not_twice_differentiable}
	Let us consider the bilevel programming problem
	\[
		\min\limits_{x,y}\{(x-8)^2+(y-9)^2\,|\,x\geq 0,\,y\in S(x)\}
	\]
	where $S\colon\R\rightrightarrows\R$ is the solution map of $\min_y\{(y-3)^2\,|\,y^2\leq x\}$.
	This lower level problem is taken from \cite{Dempe2002}.
	One can easily check that
	\[
		\forall x\in\R\colon\quad
		S(x)=
			\begin{cases}
              \varnothing & \mbox{ if }  x<0,\\
               \{\sqrt{x}\} & \mbox{ if }  0\leq x \leq 9,\\
                \{3\} & \mbox{ if }  x>9,
            \end{cases}
       \qquad \mbox{ and }\qquad
       \varphi(x)=
       		\begin{cases}
              \infty & \mbox{ if }  x<0,\\
               (\sqrt{x} - 3)^2 & \mbox{ if }  0\leq x \leq 9,\\
                0 & \mbox{ if }  x>9.
            \end{cases}
     \]
	Consider the point $(\bar x,\bar y):=(9,3)$.
	For $\bar x$, the associated lower level problem satisfies
	LLICQ and LSOSC at $\bar y$. The uniquely determined
	associated Lagrange multiplier is given by $\bar\lambda:=0$.
	One obtains
     \[
     	\mathcal L^\varphi_M(\bar x,\bar y)=\{(\delta_x,\delta_y)\in\R^2\,|\,6\delta_y-\delta_x\leq 0\},
     \]
     and this leads to validity of \eqref{eq:no_descent_direction}. Furthermore, it holds
     \[
     	\mathcal C^\varphi_M(\bar x,\bar y)=\{(\delta_x,\delta_y)\in\R^2\,|\,6\delta_y-\delta_x= 0\}.
     \]
     Using formula \eqref{eq:second_order_dir_der_varphi_unique_stable_lower_level_solution}, one obtains
     \[
     	\forall\delta_x,\omega_x\in\R\colon\quad
     	\varphi''(\bar x;\delta_x,\omega_x)
     	=
     	\begin{cases}
     		0	&\text{if }\delta_x\geq 0\\
     		\tfrac1{18}\delta_x^2	&\text{if }\delta_x<0.
     	\end{cases}
     \]
     Let $(\delta_x,\delta_y)\in\mathcal C^\varphi_M(\bar x,\bar y)$ be a nonvanishing direction.
     Clearly, it holds $\delta_y=\tfrac16\delta_x$. Thus, \eqref{eq:second_order_sufficient_condition_subproblem}
     simplifies to
     \[
     	\min\limits_{\omega_x,\omega_y,\alpha}
     	\left\{
     		\alpha
     		\,\middle|\,
     		2\omega_x-12\omega_y+(2+\tfrac1{18})\delta_x^2\leq\alpha,\,
     		\tfrac{1}{18}\delta_x^2-\varphi''(\bar x;\delta_x,\omega_x)\leq\alpha,
     		-\omega_x+6\omega_y+\tfrac1{18}\delta_x^2\leq\alpha
     	\right\}.
     \]
     In case $\delta_x>0$, the second constraint yields that the minimal value of this program is bounded from below
     by $\tfrac1{18}\delta_x^2$. Supposing that $\delta_x<0$ holds, we multiply the third constraint by $2$ and add
     it to the first one in order to see that the minimal objective value is bounded from below by
     $\tfrac{13}{18}\delta_x^2$. Consequently, \cref{thm:abstract_second_order_condition} shows that $(\bar x,\bar y)$
     is a strict local minimizer of the given bilevel optimization problem.
\end{example}

\section{Conclusion and perspectives}\label{sec:conclusions}

In this paper, we derived first- and second-order sufficient optimality conditions for optimistic bilevel
optimization problems of the form \eqref{eq:BPP}.
For the first-order sufficient conditions, we exploited reasonable upper estimates of
the tangent cone to the bilevel feasible set. Second-order sufficient conditions were obtained using the
value function reformulation \eqref{eq:LLVF} of \eqref{eq:BPP} as well as an appropriate second-order
directional derivative. 
Some relations to the popular partial calmness property, which is equivalent
to local exactness of the optimal value constraint as a penalty function, were discussed.
In light of \cite{HanMangasarian1979}, it has to be clarified in the future which types of (strong) second-order
sufficient optimality conditions addressing \eqref{eq:BPP} already
imply partial calmness at the underlying strict local minimizers, since such second-order conditions might
be too selective in light of the rare number of situations where partial calmness is inherent.
Observing that second-order sufficient optimality conditions are available for mathematical programs with
complementarity constraints, see \cite[Theorem~7]{ScheelScholtes2000}, one could investigate how these
conditions apply to the KKT-reformulation \eqref{eq:KKT_reformulation} of \eqref{eq:BPP}. Keeping in mind
that the second-order growth condition cannot hold for \eqref{eq:KKT_reformulation} whenever there exist
multiple lower level Lagrange multipliers at the reference point, see \cref{rem:non_unique_multipliers},
it has to be studied whether this theory can be generalized to the less restrictive situation where the
mapping $\Lambda$ is allowed to be multi-valued.



\end{document}